\def\cP{{\mathcal{P}}}
\def\cD{{\mathcal{D}}}
\def\cC{{\mathcal{C}}}
\def\cH{{\mathcal{H}}}
\def\cB{{\mathcal{B}}}
\def\cG{{\mathcal{G}}}
\def\cN{{\mathcal{N}}}
\def\cS{{\mathcal{B}}} 
\def\cO{{\mathcal{O}}}
\def\tcG{\tilde{\cG}}
\def\tcH{\tilde{\cH}}
\def\tZ{\tilde{Z}}
\def\R{{\mathbb R}}
\def\Z{{\mathbb Z}}
\def\N{{\mathbb N}}
\def\Q{{\mathbb Q}}
\def\A{{\mathbb A}}
\def\raw{\rightarrow}
\def\us{{\underline s}}
\def\ut{{\underline t}}
\def\uw{{\underline w}}
\def\ukappa{{\underline \kappa}}
\def\tx{{\tilde{x}}}
\def\tY{{\tilde{Y}}}
\def\tZ{{\tilde{Z}}}
\def\tf{\tilde{f}}
\def\tg{\tilde{g}}
\def\tH{\tilde{H}}
\def\th{{\tilde{h}}}
\DeclareMathOperator{\HD}{HD}
\DeclareMathOperator{\HM}{HM}
\DeclareMathOperator{\Boxx}{Box}
\def\bgamma{\overline{\gamma}}
\def\bHMk{\overline{\HM_k(g)}}
\def\bHM{\overline{\HM}}
\def\bpsi{\overline{\psi}}
\def\ilim{{\varprojlim}}
\def\hZ{{\hat{Z}}}
\def\hT{{\hat{T}}}
\def\hp{{\hat{p}}}
\def\hr{{\hat{r}}}
\def\hsigma{{\hat{\sigma}}}
\def\hLambda{{\hat{\Lambda}}}
\def\hmu{{\hat{\mu}}}
\def\hpi{{\hat{\pi}}}
\def\hcS{{\hat{\cS}}} 
\def\hcB{{\hat{\cB}}} 
\def\hcO{{\hat{\cO}}}
\def\hcC{{\hat{\cC}}}
\def\hcN{{\hat{\cN}}}
\def\hrho{{\hat{\rho}}}
\def\heta{{\hat{\eta}}}
\def\hgamma{{\hat{\gamma}}}
\def\hell{{\hat{\ell}}}
\DeclareMathOperator{\Cl}{Cl}
\DeclareMathOperator{\Intt}{Int}
\DeclareMathOperator{\Fr}{Fr}
\DeclareMathOperator{\spt}{spt}
\DeclareMathOperator{\Pure}{Pure}
\def\bPure{\overline{\Pure}}
\def\lrk{\langle \ukappa_0, \ukappa_1 \rangle}
\def\vc{\vec{c}}
\def\vu{\vec{u}}
\def\vv{\vec{v}}
\def\vL{\vec{L}}
\def\vnu{\vec{\nu}}
\def\vnup{\vec{\nu}^{\;\prime}}
\def\vdelta{\vec{\delta}}
\def\veta{\vec{\eta}}
\def\otz{{\Cl(o(z',\tg_k))}}
\def\oty{{\Cl(o(y',\tg_k))}}
\def\kludge{{\subsetneq}}
\def\xmin{{x_{min}}}
\def\xmax{{x_{max}}}
\def\zmin{{z_{min}}}
\def\zmax{{z_{max}}}
\def\One{\mathbbm{1}}
\def\mysim{\mkern-8mu\sim}
\def\mymod{\mkern-8mu\mod}
\title{On the abundance of $k$-fold semi-monotone minimal 
sets in bimodal circle maps}
\author{Philip Boyland}
\address{Department of
    Mathematics\\University of Florida\\372 Little Hall\\Gainesville\\
    FL 32611-8105, USA}
\email{boyland@ufl.edu}
\begin{document}

\begin{abstract}
Inspired by a twist maps theorem of Mather  
we study recurrent invariant sets that are ordered like rigid rotation
 under the action of the lift of a bimodal circle map $g$ to the
$k$-fold cover. For each irrational in the interior of the rotation
set the collection of the $k$-fold ordered semi-Denjoy minimal sets 
with that rotation number
contains a $(k-1)$-dimensional ball in the weak topology on their
unique invariant measures. We also describe completely their periodic
orbit analogs for rational rotation numbers. The main tool is 
a generalization of a construction of Hedlund and Morse which generates
the symbolic analogs of these $k$-fold well-ordered invariant sets.
\end{abstract}

\maketitle

\section{introduction}

In a dynamical system a rotation number or vector measures the asymptotic
speed and direction of an orbit. The rotation set collects all these
together into a single invariant of the system. The natural question is 
how much does this invariant tell you about the dynamics? Perhaps the
first issue is whether for each rotation number is there a nice
invariant set in which every point has that rotation number?

This question has been studied in a number of contexts with the
most complete answer known about maps of the circle, annulus and 
two-dimensional torus. In these cases the basic question is enhanced
by requiring that the invariant set of a given rotation vector
has the same combinatorics as a rigid rotation. So, for example, for 
a continuous
degree-one map $g$ of the circle and a number $\omega$ in its rotation set
is there an invariant set $Z_\omega$ on which the action of $g$
looks like the invariant set of rigid rotation
of the circle by $\omega$? This is made more precise and clear
by lifting the dynamics to the universal cover $\R$. 
The question then translates to whether the action of the lift
$\tg:\R\raw\R$ on the lift $\tZ_\omega$ is order-preserving?
For this class of maps the answer is yes;
 such invariant sets always exist (\cite{CGT}).

On the torus and annulus a general homeomorphism isotopic to the identity 
 lacks the structure to force the
desired invariant sets to be order preserving so topological
analogs are used (\cite{LC, Bdmon, Parwani}).
The required additional structure available in the annulus case is the
monotone twist hypothesis. In this case the celebrated Aubry-Mather
Theorem states that for each rational in the rotation set there
is a periodic orbit and for each irrational a Denjoy minimal set
and the action of the map on these invariant sets is ordered
in the circle factor like rigid rotation. These invariant sets are 
now called \textit{Aubry-Mather sets}.

For an area-preserving monotone twist map the minimal set
with a given irrational rotation number could be an invariant
circle. When a parameter is altered and this circle breaks it
is replaced by an invariant Denjoy minimal set. 
In \cite{mather} Mather investigated what additional dynamics this forces.
He showed that in the absence of an invariant circle  
with a given irrational rotation number there are many other 
invariant minimal Cantor sets with the same rotation number and the dynamics
on these sets is nicely ordered under the dynamics not in the base, 
but rather in finite covering spaces of the annulus.

More specifically, these invariant minimal sets are Denjoy minimal sets
which are uniquely ergodic. Their collection is  topologized 
using the weak topology on these measures. Mather showed that for
a given irrational rotation number in the rotation set the collection
of Denjoy minimal sets with that rotation number which are ordered
in the $k$-fold cover contains a topological disk of dimension
$k-1$. In this paper we prove the analog of this result for a class $\cG$ 
of  bimodal degree-one maps of the circle as well as describing their periodic
orbits which have nicely ordered lifts in the $k$-fold cover.

 Mather's proof use variational methods. The main methods here come 
from symbolic dynamics and utilize a construction that 
is a generalization of 
one due to Hedlund and Morse (\cite{HM1} and \cite{GH} page 111). 
Such generalizations are a common tool in topological dynamics
(\cite{MP}, \cite{A}  pages 234-241, and \cite{BdDenjoy}).
This HM construction used here for a rotation number $\omega$ and number $k$ 
generates the itineraries under rigid rotation by $\omega$
with respect to an address systems made from $2k$ intervals on the circle.  
The closure of this set of itineraries yields the symbolic analog
of an invariant set that is nicely ordered in the $k$-fold cover.
These sets are termed \textit{symbolic $k$-fold semi-monotone sets
(symbolic kfsm sets)}. Varying the address system parameterizes the symbolic kfsm sets
in both the Hausdorff and weak topologies.

A \textit{physical kfsm set} (or just a kfsm set if the context is clear)
is a $g$-invariant set $Z$  which has a lift $Z'$ to the $k$-fold cover
of the circle $S_k$ on which the lift $g_k$ of $g$ acts like rigid rotation.
Physical and symbolic kfsm sets are connected by a second main tool.

The second tool again uses addresses and itineraries, but this
time to code orbits under the bimodal map $g$. Restricting to all
orbits that land in the positive slope region we get an invariant
set $\Lambda(g)$ which is coded by an order interval in the one-sided
two shift $\Sigma_2^+$. Since we need to study invariant sets that
are ordered in the $k$-fold cover we lift this
coding to one on the orbits which stay in the positive slope region
under $g_k$ in the $k$-fold cover $S_k$. This yields a $g_k$-invariant set 
$\Lambda_k(g)$ which is then coded by a subshift 
$\hLambda_k(g)\subset\Sigma_{2k}^+$. 

This result
connects the physical kfsm sets in $\Lambda_k(g)$, the symbolic
kfsm sets in $\hLambda_k(g)$,  and the symbolic sets generated by the 
HM construction. Part (c) will be explained below.
\begin{theorem}\label{maintfae}  For $g\in\cG$ the following are
equivalent:
\begin{enumerate}[(a)]
\item $Z\subset \Lambda(g)$ is a recurrent kfsm set for $g$.
\item The symbolic coding of $Z$ via the itinerary
map is constructable via the HM process.
\item  $Z$ is a recurrent set of an interpolated semi-monotone
map $H_{\vc}$ in the $k$-fold cover.
\end{enumerate}
\end{theorem}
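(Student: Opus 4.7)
The plan is to prove the three equivalences cyclically as $(a)\Rightarrow(b)\Rightarrow(c)\Rightarrow(a)$. Each direction has a distinct flavor: $(a)\Rightarrow(b)$ is a symbolic identification that exploits the order-preserving action in the $k$-fold cover; $(b)\Rightarrow(c)$ physically realizes a symbolic object as the dynamics of a degree-one semi-monotone map in $S_k$; and $(c)\Rightarrow(a)$ is a rigidity-type statement for recurrent orbits of semi-monotone maps.

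For $(a)\Rightarrow(b)$, suppose $Z\subset\Lambda(g)$ is a recurrent kfsm set with rotation number $\omega$. By definition its lift $Z'\subset S_k$ is ordered under $g_k$ like rigid rotation $R_{k\omega}$, so there is an order-preserving semi-conjugacy from $(Z',g_k)$ onto an orbit closure of $R_{k\omega}$. The key step is to identify the $2k$-interval address system used to construct $\hLambda_k(g)$ with a $2k$-interval address system on the circle: the endpoints of the address intervals for $g_k$ are carried by the semi-conjugacy to points on $S^1$ that serve as HM parameters. Since $g_k$ and $R_{k\omega}$ then produce the same itineraries along corresponding orbits, the coding of $Z$ is exactly the closure of a rigid-rotation itinerary, i.e., in the HM form.

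For $(b)\Rightarrow(c)$, given that the symbolic coding of $Z$ is constructable from an HM parameter vector $\vc$, the plan is to build an interpolated semi-monotone map $H_{\vc}$ on $S_k$ whose partition combinatorics on $2k$ intervals mirror the HM address system. The construction should be arranged so that $H_{\vc}$ is semi-conjugate to $R_{k\omega}$ on its recurrent set and the itinerary map for $H_{\vc}$ factors through the HM coding. Then the recurrent set of $H_{\vc}$ pulls back via the coding of $\Lambda_k(g)$ to $Z$.

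Finally, for $(c)\Rightarrow(a)$, I would invoke the theorem that a recurrent invariant set of a degree-one semi-monotone circle map has a well-defined rotation number and is ordered on its lift to $\R$ like rigid rotation---an extension of the classical result for homeomorphisms to the semi-monotone setting. Applying this to $H_{\vc}$ on $S_k$ shows that $Z$ has a lift on which $g_k$ acts order-preservingly, so $Z$ is a recurrent kfsm set for $g$. The main obstacle I expect is $(b)\Rightarrow(c)$: writing down $H_{\vc}$ explicitly from the HM data so that its recurrent set realizes the prescribed symbolic set. One must specify both the placement of the $2k$ cut points and the interpolating graphs in a way compatible with $g_k$, then verify that the symbolic-to-physical correspondence is well-defined, carefully handling any degeneracies in which several parameter vectors yield the same symbolic set.
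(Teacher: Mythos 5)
Your overall architecture (a cyclic chain of implications, with rigid-rotation order as the bridge) is reasonable, but two of the three legs have genuine gaps. In $(a)\Rightarrow(b)$ you stop at the point where the real work begins. Having an order-preserving semiconjugacy $\phi$ from the lifted set onto an orbit closure of rigid rotation only shows that the coding of $Z$ arises from itineraries of $R_\omega$ with respect to \emph{some} $2k$-interval partition of $S_k$, namely $X_j=\phi([j])$. The HM construction is not an arbitrary partition: it requires every odd interval $X_{2j+1}$ to have width exactly $\omega$ (equivalently $\sum\nu_i=k-k\omega$). The paper's proof of this (Theorem~\ref{Bkstruc}(c)) is exactly the step you omit: if $|X_{2j+1}|<\omega$ the coding would contain a transition $2j\to 2j+2$, and if $|X_{2j+1}|>\omega$ a transition $2j+1\to 2j+1$, both forbidden in $\Omega_k$ by \eqref{transitions}. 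Without this verification the implication does not land in the HM family. You also need to treat the rational case separately, where $\phi$ maps onto a finite orbit and the intervals must be built around the periodic points; the argument is combinatorially different.

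The more serious problem is $(b)\Rightarrow(c)$. Statement (c) refers to an \emph{interpolated} semi-monotone map: $H_{\vc}$ is obtained by cutting off the graph of $\tg_k$ at the heights $\vc$, so that $H_{\vc}=\tg_k$ off the flat spots. Your plan builds a semi-monotone map modeled on the HM address system and semi-conjugate to $R_{k\omega}$; such a map realizes the correct symbolic dynamics but is not an interpolation of $g$ and says nothing about where the physical set $Z$ sits inside $\Lambda_k(g)$. The correct construction (Theorem~\ref{boxthm}) goes through the physical set: the semi-monotonicity of $Z$ is equivalent to the gap condition $\ell_j(Z)\le r_j(Z)$, where $\ell_j,r_j$ are determined by $\max\{Z\cap I_{2j-1}\}$ and $\min\{Z\cap I_{2j}\}$, and precisely this condition lets one insert flat spots of $\tg_k$'s graph over the gaps so that $Z\subset P(H_{\vc})$ and $(\tg_k)|_Z=(H_{\vc})|_Z$; recurrence of $Z$ as the recurrent set of $H_{\vc}$ then follows from Lemma~\ref{flat}. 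Your $(c)\Rightarrow(a)$ is essentially right (invariant sets of nondecreasing degree-one maps are automatically kfsm), though you should note that the recurrent set of $H_{\vc}$ lies in $P(H_{\vc})$, where $H_{\vc}$ agrees with $\tg_k$, so that it really is a $g$-invariant set in $\Lambda_k(g)$.
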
 
Note that the result is restricted to recurrent kfsm sets. There are 
several reasons for this. First, recurrence is where the interesting
dynamics occurs, second, invariant measures are always supported on
recurrent sets, and finally, the HM construction produces recurrent
sets. As is well-known in Aubry-Mather theory there are also 
nonrecurrent kfsm sets which consist of a recurrent set  and
orbits homoclinic to  that set. We also restrict to orbits that
stay in the positive slope region of $g$. Considering kfsm sets
that also have points in the negative slope region at most adds
additional homoclinic orbits or shadow periodic orbits. See
Section~\ref{neghom}.

For each $k$, the HM construction depends on two parameters,
a rotation number $\omega$ and a parameter $\vnu$ describing
the address system on the circle. For a rational rotation number it produces
a finite cluster of periodic orbits while for irrationals it produces
a semi-Denjoy minimal set. Since $g$ is noninjective the analogs
of Denjoy minimal sets have pairs of points that collapse in
forward time, and hence the ```semi'' in their name. 

Another main result is that the HM construction
parameters $(\omega, \vnu)$  yield a  homeomorphic parameterization of 
the space of invariant measures on the recurrent symbolic kfsm sets with
the weak topology. Via the itinerary map, this is pulled back to
a parameterization of  the space of invariant measures on the 
physical recurrent kfsm sets with the weak topology. It yields the
following result in which $\rho(g)$ is the rotation interval of $g\in\cG$. 
\begin{theorem}\label{main}
Assume $g\in\cG$, $\alpha\not\in\Q$, $\alpha\in\Intt(\rho(g))$, and $k>0$.
\begin{enumerate}[(a)]
\item  In the weak topology there is $(k-1)$-dimensional disk of kfsm
 semi-Denjoy minimal sets with rotation number $\alpha$. 
\item If $p_n/q_n$ is a sequence of rationals in lowest terms with
$p_n/q_n \raw \alpha$, then the number of distinct kfsm
periodic orbits of $g$ with  rotation number $p_n/q_n$ grows like $q_n^{k-1}$.
\end{enumerate}
\end{theorem}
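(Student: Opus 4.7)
The plan is to invoke the parameterization result for invariant measures on recurrent symbolic kfsm sets stated immediately before Theorem~\ref{main}, together with Theorem~\ref{maintfae} to transfer the result from the symbolic setting to physical kfsm sets in $\Lambda(g)$. The hypothesis $\alpha\in\Intt(\rho(g))$ will be used to ensure that for every admissible choice of $\vnu$ the HM-coded itineraries actually lie in $\hLambda_k(g)$, so that every symbolic kfsm set constructed corresponds to a bona fide physical kfsm set in $\Lambda(g)$ rather than a purely symbolic object.

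For part (a), I would fix $\omega=\alpha$ and let $\vnu$ vary over its parameter space. The address system on $S_k$ consists of $2k$ intervals, and after accounting for the $k$-fold covering symmetry of $S_k$ together with the normalization that the lengths partition a fundamental domain, the natural space of admissible $\vnu$ is $(k-1)$-dimensional. Because $\alpha$ is irrational, for every choice of $\vnu$ the HM output is a semi-Denjoy minimal set, and the parameterization theorem states that the assignment $\vnu\mapsto \mu_{\alpha,\vnu}$ into the weak topology on invariant measures is a homeomorphism onto its image. Restricting this map to a closed $(k-1)$-ball interior to the parameter space and pushing forward through the itinerary map of Theorem~\ref{maintfae} yields the required $(k-1)$-disk of kfsm semi-Denjoy minimal sets with rotation number $\alpha$.

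For part (b), I again fix $\omega=p_n/q_n$ and vary $\vnu$ over the same $(k-1)$-dimensional parameter space. Now the HM construction outputs a finite cluster of periodic orbits of period $q_n$, and the symbolic periodic orbit produced depends only on which of the $2k$ address intervals each of the $q_n$ points $\{jp_n/q_n \bmod 1 : 0 \le j < q_n\}$ lies in. Hence the parameter space is partitioned into open cells on each of which the produced orbit is combinatorially constant, with walls located precisely where some endpoint of the $\vnu$ address system coincides with a rotation-orbit point. In the $(k-1)$-dimensional parameter space these walls form an arrangement of roughly $q_n$ affine hyperplanes in general position, and a standard cell count for such arrangements is asymptotic to $q_n^{k-1}$.

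The main obstacle I anticipate is sharpening part (b) from the easy upper bound to a matching lower bound: one must verify that distinct cells in the parameter space really do yield distinct periodic orbits of $g$ (no accidental collisions under the itinerary/coding map), and that a $\Theta(q_n^{k-1})$ fraction of the cells indeed correspond to orbits realized in $\Lambda(g)$ rather than being excluded by admissibility in $\hLambda_k(g)$. The first point is handled by the injectivity statement of the parameterization theorem applied in the rational case, since distinct orbits carry distinct invariant measures. The second uses $\alpha\in\Intt(\rho(g))$ together with $p_n/q_n\raw\alpha$, so that for $n$ large $p_n/q_n$ also lies strictly inside $\rho(g)$ and every HM-produced periodic orbit at rotation number $p_n/q_n$ is admissible. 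Once admissibility is secured, the hyperplane-arrangement count supplies the precise asymptotic growth.
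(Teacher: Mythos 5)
Your overall strategy (HM parameterization of symbolic kfsm sets, transfer to $\Lambda(g)$ via the itinerary conjugacy, $(k-1)$-dimensional $\vnu$-slice for fixed $\alpha$, lattice/cell count for rationals) matches the paper's, but there is a genuine gap at the step where you claim admissibility. You assert that $\alpha\in\Intt(\rho(g))$ ensures that \emph{every} allowable $\vnu$ yields itineraries lying in $\hLambda_k(g)$, and likewise that for large $n$ \emph{every} HM-produced $p_n/q_n$-periodic orbit is admissible. This is false: $\hLambda_k(g)=\Omega_k\cap\hpi_k^{-1}(\lrk)$ is cut out by the kneading data of $g$, and $\HM_{k,\omega}(g)$ is in general a proper subset of the slice $\cD_{k,\omega}$ (it equals the whole slice only for the model map $f_m$). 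Having $\omega$ in the interior of $\rho(g)$ guarantees only that the Sturmian parameter $\vnu_s(\omega)$ on the diagonal is admissible; a strongly skewed $\vnu$ can push $C_k(\omega,\vnu)$ outside the order interval $\lrk$. The paper closes exactly this gap in Theorem~\ref{mainone}: one sandwiches $C_1(\alpha)$ strictly between Sturmians $C_1(\alpha_1)$ and $C_1(\alpha_2)$ with $\alpha_1<\alpha<\alpha_2$ in $\Intt(\rho(g))$ (Lemma~\ref{sturmorder}), observes that $(\alpha,\vnu_s(\alpha))$ is nonresonant so $C_k$ is continuous there (Theorem~\ref{alphathm}), and concludes that a full $\delta$-ball $N_\delta(\vnu_s(\alpha))$ lies in $\HM_k(g)$. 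Your proof needs this (or an equivalent) argument; without it neither the $(k-1)$-disk in (a) nor a $\Theta(q_n^{k-1})$ supply of admissible parameters in (b) is established.

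In part (b) there is a second, smaller problem: counting cells of the hyperplane arrangement does not directly count distinct periodic orbits, and injectivity of $\vnu\mapsto\lambda_k(p/q,\vnu)$ does not rescue this, because on a rational slice a parameter generically produces a \emph{cluster} of up to $k$ periodic orbits and adjacent cells share most of their constituent orbits; distinct measures can be distinct convex combinations of the same orbits. The paper instead counts the \emph{pure} parameters (those $\vnu$ for which $B_k(p/q,\vnu)$ is a single periodic orbit), shows they form an affine lattice of spacing $\gcd(p_n,k)/q_n$ in the $\vdelta$-coordinates (Lemma~\ref{latticelem}), and uses the bijection between pure parameters and symbolic kfsm periodic orbits (Remark~\ref{unique2}) before dividing by at most $k$ for the $\tau$-symmetry when projecting to $S^1$. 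Your cell count gives the right order of magnitude and could be repaired, but as written the distinctness step and the descent from $S_k$ to $S^1$ (where the period is $q_nk/\gcd(p_n,k)$, not $q_n$) are not justified.
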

Informally, a kfsm semi-Denjoy minimal set  wraps
$k$-times around the circle with orbits moving at different average
speeds in each loop. Lifting to the $k$-fold cover these ``speeds''
are given by the amount of the unique invariant measure present
in a fundamental domain of $S_k$: more measure means slower speed.
The $k$-dimensional vector of these measures is called the \textit{skewness}
of the minimal set. The sum of the components of the skewness is one
and thus the collection of possible skewnesses contains
a $(k-1)$-dimensional ball. The skewness turns out to be an injective 
parameterization of the kfsm sets for a given irrational rotation
number in the interior of the rotation set of a $g\in \cG$ 
(see Remark~\ref{skewrk}).

The HM-parametrization of kfsm sets with the Hausdorff topology
is only lower semicontinuous. The points of discontinuity 
are given in Theorem~\ref{iotathm}.


We will on occasion use results derived from those of
Aubry-Mather theory. While the context here is a bit different
the proofs are virtually identical and so are omitted. There
are excellent expositions of Aubry-Mather theory; see, for example,
\cite{MS},  \cite{HK} Chapter 13, and \cite{gole} Chapter 2.
In the context of the generalization of Aubry-Mather theory to
monotone recursion maps a version of
Mather's theorem on Denjoy minimal sets is given in \cite{moredenjoy1}.

We restrict attention here to a  particular class of bimodal circle
maps defined in Section~\ref{classG}.  Using the Parry-Milnor-Thurston
Theorem for degree-one circle maps the results
can be transferred (with appropriate alterations) to general
bimodal circle maps (see Remark~\ref{generalg}). 

It is worth noting that the results here immediately apply to 
a class of annulus homeomorphisms. This application
can be done either via the Brown-Barge-Martin method
using the inverse limit of $g\in\cG$ (\cite{bbm, usbbm}) 
or via the symbolic dynamics
in annulus maps with good coding like a rotary horseshoe, for example, 
\cite{HH3, levi, BdDenjoy, GM}.

\begin{figure}[htbp]
\begin{center}
\includegraphics[width=0.4\textwidth]{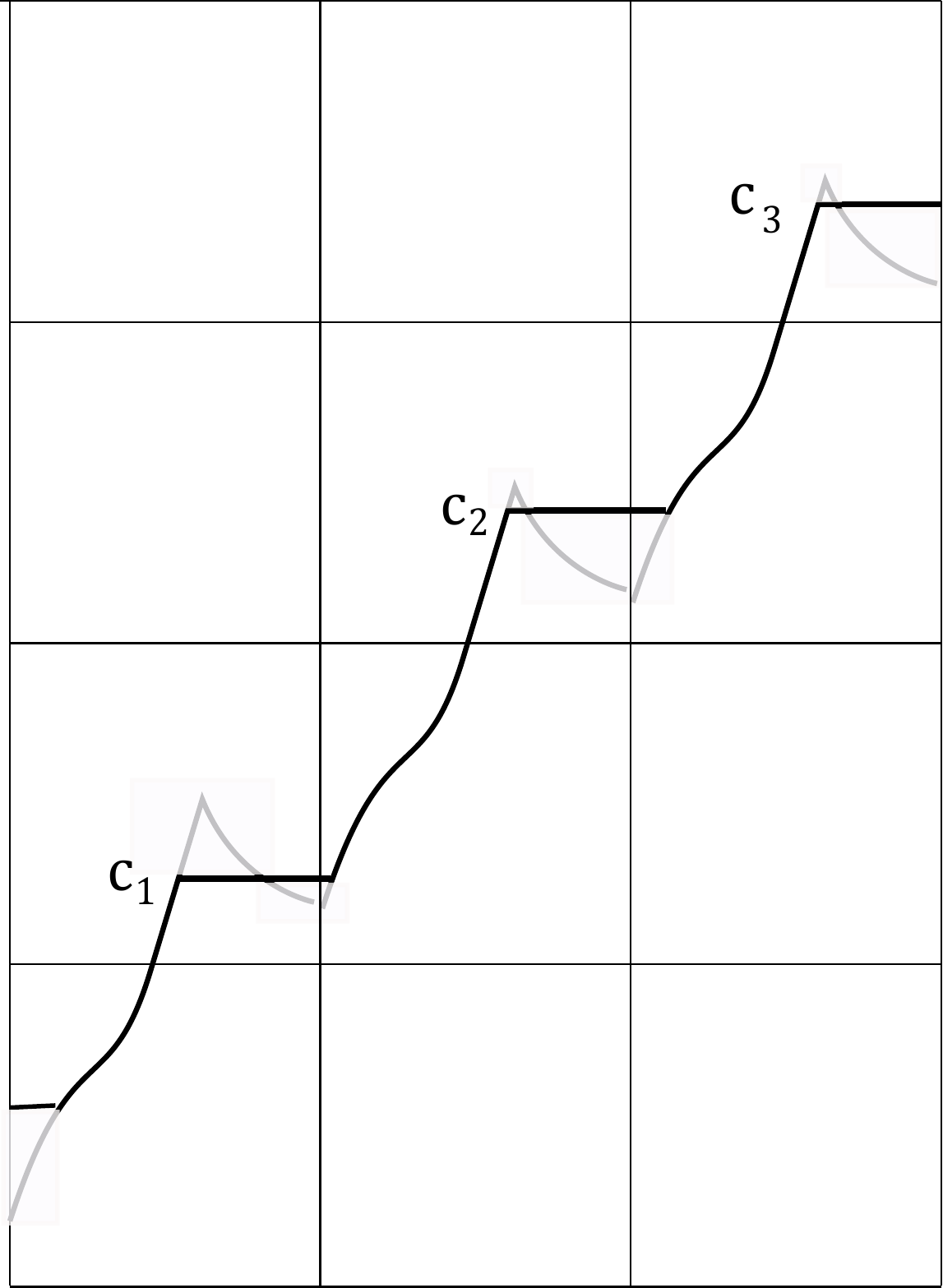}
\end{center}
\caption{The lift of a $g\in\cG$ to the 3-fold cover and an interpolated
semi-monotone map}
\label{figure1}
\end{figure}

Figure~\ref{figure1} illustrates the conceptual framework that inspired the
results here. It shows the graph of a map $g \in \cG$ lifted to
the $3$-fold cover. At three heights $(c_1, c_2, c_3) = \vc$ the 
graph is cut-off yielding a semi-monotone circle map $H_{\vc}$. Such
maps have a unique rotation number and well understood recurrent
sets which are of necessity semi-monotone sets. As $\vc$ is varied, the 
rotation number $\rho(H_{\vc}) = \rho(\vc)$ varies continuously.
Thus one would expect that the level sets $\rho^{-1}(\omega)$
 provide a parameterization of the kfsm sets with rotation
number $\omega$. In particular, for irrational $\omega$ this
level set should be a $(k-1)$-dimensional disk as in Theorem~\ref{main}(a).
This is true for $g\in\cG$.  Figure~\ref{rect5} below shows
some level sets. It is worth noting that this figure is 
not a bifurcation diagram, but rather a detailed analysis of
the dynamics present in a single map.

While providing a valuable heuristic this point of view
is not as technically tractable as the HM construction and
we content ourselves with  just a few comments on it in Section~\ref{hall}.
One of which  is the addition of  item (c)
to the list of equivalent conditions in Theorem~\ref{maintfae}.

The literature on bimodal circle map dynamics is vast and we briefly
mention  only two threads here.
Symbolic dynamics for degree one bimodal goes back at least to \cite{Bernhardt, HH1, HH2}.  The interpolated ``flat spot'' map 
trick for finding  one-fold semi-monotone sets was
discovered and used by many people in the early
80's; references include 
\cite{CGT, Mcirc, V1, V2, V3, Kadanoff, Bdcirc, physics}.
The author learned the trick from G.R. Hall in Spring, 1983 and the
idea of applying it in finite covers emerged in conversations with him. 

There are many  questions raised by this work, but we mention 
just three here. As is well-known, the one-fold symbolic semi-monotone
sets generated by the HM construction are the much-studied Sturmian sequences.
The general symbolic kfsm are thus a generalization of one property of the
Sturmians to more symbols (there are many other generalizations). The Sturmians
have many marvelous properties such as their connection to the Farey
tree and substitutions: which properties
are shared by symbolic kfsm sets? 

The HM construction is an explicit parameterized way of getting
well controlled orbits that do not preserve the cyclic order
in the base and thus in most cases force positive entropy and 
as well as other orbits. A second question is how does the
 parameterization given by
the HM construction interact with the forcing orders
on orbits in dimension one and two (see, for example, \cite{misbook}
and \cite{bdamster}).

A final question relates to the global parameterization of kfsm by the
HM construction. Each bimodal map $\in\cG$ corresponds to a specific
set of parameters, namely, those that generate symbolic kfsm
whose physical counterparts are present in $g$. What is the 
shape of this set of parameters?

After this work was completed the author became aware of the considerable 
literature studying invariant sets in the circle that are 
invariant and nicely ordered under the action of $z\mapsto z^d$
\cite{uk, gold, goldtress, bowman, uab, uabthesis}.
While the exact relationship of that theory and what is contained
in the paper is not clear, it is clear that the two areas share
many basic ideas and methods. These include using a family of
interpolated semi-monotone circle maps with flat spots, tight and loose
gaps in invariant Cantor sets, parametrizing
the sets using the position of the flat spots, and parametrizing
the sets with irrational rotation number by an analog of skewness.
Section~\ref{last} contains a few more comments on the relationship
of the problems.

\section{Preliminaries}
\subsection{Dynamics}
Throughout this section $X$ is a metric space and
$g:X\raw X$ is a continuous, onto map. Since the maps $g$
we will be considering will usually not be injective, we will
be just considering forward orbits, so $o(x,g) = \{x, g(x),
g^2(x), \dots\}$.

A point $x$  is \textit{recurrent} if there exists a sequence
$n_i\raw\infty$ with $g^{n_i}(x)\raw x$. A $g$-invariant set $Z$ is called
\textit{recurrent} if every point $z\in Z$ is recurrent. Note that
\textit{a recurrent} subset is usually different
 than \textit{the recurrent set},
with the latter being the closure of all recurrent points. A compact
 invariant set $Z$ is called \textit{minimal} if every point $z\in Z$ has 
a forward orbit that is dense in $Z$.


The one-sided shift space on $n$ symbols is
 $\Sigma^+_n = \{0, \dots, n-1\}^\N$
and that on $\Z$ symbols is $\Sigma^+_\Z = \Z^\N$. Occasionally
we will write $\Sigma_\infty^+$ for $\Sigma^+_\Z$.  For clarity we
note that in this paper $0\in\N$. In every case we give one-sided
shift spaces the lexicographic order and the left shift map is denoted
$\sigma$, perhaps with a subscript to indicate the shift space upon
which it acts. Maps between shifts and subshifts here will
always be defined by their action on individual symbols
so, for example, $\varphi:\Sigma_n^+\raw \Sigma_n^+$ defined on
symbols
by $s\mapsto \varphi(s)$ means that $\varphi(s_0 s_1 s_2\dots) 
= \varphi(s_0) \varphi(s_1) \varphi(s_2) \dots$. For a block 
$B = b_0 \dots b_{N-1}$ in $\Sigma^+_n$ its cylinder set is 
$[B] = \{ \us\in\Sigma_n^+\colon s_i = b_i, i = 0, \dots, N-1\}$.
Note that all our cylinder sets start with index $0$

The space-map pairs $(X,f)$ and $(Y,g)$ are said to be
\textit{topologically conjugate} by $h$ if $h$ is a homeomorphism
from $X$ onto $Y$ and $hf = gh$.

We will frequently use the standard dynamical tool of addresses and itineraries.
Assume  $X = X_0\sqcup X_1\sqcup\dots \sqcup X_{n-1}$ with $\sqcup$
denoting disjoint union.
Define the \textit{address map} $A$
as $A(x) = j$ when $x\in X_j$ and the \textit{itinerary map}
$\iota:X\raw \Sigma_n^+$  by
$\iota(x)_i = A(g^i(x))$. It is immediate that $\sigma\circ \iota
= \iota\circ g$.  In many cases here,
$\iota$ will be continuous and injective yielding a topological conjugacy from
$(X,g)$ to a subset of $(\Sigma_n^+, \sigma)$.

We will also encounter the situation where the $X_j$ are not disjoint, but
intersect only in their frontiers  $\Fr(X_j)$. In this case we
define a ``good set'' $G = \{x \colon o(x, g)\cap (\cup \Fr(X_j)) =
\emptyset\}$. In this case the itinerary map is defined as
$\iota:G\raw \Sigma_n^+$.

For $Z\subset X$, its interior, closure and frontier are denoted
by $\Intt(Z), \Cl(Z),$ and $\Fr(X)$, respectively. The $\epsilon$-ball
about $x$ is $N_\epsilon(x)$. The Hausdorff distance between two
sets is denoted $\HD(X, Y)$. For an interval $I$ 
in $\R$, $|I|$ denotes it length, and for a finite set $Z$, $\# Z$ 
is its cardinality. On an ordered $k$-tuple the map $\tau$ is the
left cyclic shift, so $\tau(a_1, a_2, \dots, a_k) = (a_2, a_3, 
\dots, a_k, a_1)$. On the circle $S^1$,  $\theta_1 < \theta_2$ is defined
as long as $|\theta_1-\theta_2| < 1/2$. 

\subsection{The circle, finite covers and degree one circle maps}\label{coverdef}
While the only compact manifold here will be a circle it will clarify
matters to use the language of  covering spaces.

In general, if $\pi:\tY\raw Y$ is a covering space, and $Z\subset Y$,
\textit{a lift} of $Z$ is any set $Z'\subset \tY$ with $\pi(Z') = Z$. 
The \textit{full lift} of $Z$ is $\tZ = \pi^{-1}(Z)$. If
$g:Y\raw Y$ lifts to $\tg:\tY\raw \tY$ and $Z\subset Y$ is
$g$-invariant then the full lift $\tZ$ is $\tg$-invariant, a
property that is usually not shared by a lift $Z'$.

The universal cover of the circle is $\R$ with deck transformation
$T(x) = x+1$ and the covering space is thus
$\pi:\R\raw S^1 = \R/T = \R/\Z = [0,1]/\mysim$. We will
only study maps $g:S^1\raw S^1$ whose lifts $\tg$ commute with the
deck transformation, $\tg T = T \tg$, or $\tg(x+1) = \tg(x) + 1$.
These circle maps are commonly termed \textit{degree one}. Our
given $g$ will usually have a preferred lift $\tg$ and so 
all other lifts are obtained as $T^n \tg$ or $\tg + n$.
 
Central to our study are the finite $k$-fold covers of the circle
for each $k>0$, $S_k = \R/T^k = \R/k\Z = [0,k]/\mysim$. The deck
transformation $T_k:S_k \raw S_k$ is induced by $T$ on $\R$
and the covering space is $\pi_k:S_k\raw S^1$. A preferred
lift $\tg$ of $g$ to $\R$ induces a preferred lift
 $\tg_k:S_k\raw S_k$ that commutes with $T_k$. 
We also need the map from the universal cover
to the $k$-fold cover treating $S_k$ as the base space $p_k:\R\raw S_k$.

A $g$-periodic point $x$ is said to have \textit{rotation type} 
$(p,q)$ with respect to the preferred lift $\tg:\R\raw\R$
if $x$ has period $q$ and for some  lift
$x'\in\R$, $\tg^q(x') = T^p x'$. Note that there is 
no requirement here that $p$ and $q$ are relatively prime.

A central concern in this paper is how $g$-minimal sets in $S^1$
lift to $k$-fold covers.

\begin{theorem}\label{lift}
 Let $g:S^1\raw S^1$ be degree one and  fix  $1 < k< \infty$.
\begin{enumerate}[(a)]
\item If $Z\subset S^1$ is a minimal set, then there exists an
 $m$ which divides $k$ so that the full lift of $Z$ to $S_k$ satisfies
\begin{equation}\label{already}
\tZ = \sqcup_{j=1}^m Z_j'
\end{equation}
with each $Z_j'$ minimal under $\tg_k$, $\pi_k(\tZ_j') = Z$ and
$T_k(Z_j') = Z_{j+1}'$ with indices $\mymod k$.
\item If $Z', Z''\subset S_k$ are $\tg_k$ minimal
sets, we have $\pi_k(Z') = \pi_k(Z'')$ if and only
if $T^p_k(Z') = Z''$ for some $p$.
\item If $x\in S^1$ is a periodic point  with rotation type $(p,q)$
let $m = \gcd(k,p)$. There exist $x_j'\in \pi_k^{-1}(x)\subset S_k$ with
$1 \leq j \leq m$ and
 \begin{equation}\label{already2}
\pi_k^{-1}(o(x,g)) = \sqcup_{j=1}^m o(x_j', \tg_k)
\end{equation}
 the period of each $x_j'$ under
$\tg_k$ equal to $kq/m$, and $T_k(x_j') = x_{j+1}' $ 
with indices $\mymod k$.
\end{enumerate}
\end{theorem}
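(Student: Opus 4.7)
The proofs rest on two facts. First, the deck group $\langle T_k\rangle\cong \Z/k\Z$ of the covering $\pi_k:S_k\raw S^1$ commutes with $\tg_k$ and acts simply transitively on each fiber. Second, every continuous selfmap of a nonempty compact set admits a minimal invariant subset. Parts (a) and (b) are then essentially an orbit-counting argument for the $\langle T_k\rangle$-action on $\tg_k$-minimal subsets of $\tZ$, and (c) is a direct calculation of how the deck group acts on a single lifted periodic orbit.

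For (a), I would start from a $\tg_k$-minimal set $Z_1'\subset\tZ$, which exists by compactness of $\tZ$. Its image $\pi_k(Z_1')$ is a nonempty compact $g$-invariant subset of $Z$, hence equals $Z$ by minimality. Because $T_k$ commutes with $\tg_k$, each $T_k^j(Z_1')$ is again $\tg_k$-minimal and projects onto $Z$; let $m$ be the smallest positive integer with $T_k^m(Z_1')=Z_1'$. Then $m\mid k$ by the usual stabilizer argument, and $U=\sqcup_{j=0}^{m-1}T_k^j(Z_1')$ is a disjoint union (distinct $\tg_k$-minimal sets are disjoint) that is closed and both $\tg_k$- and $T_k$-invariant. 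The key observation is that $T_k$-invariance automatically promotes to $\pi_k$-saturation: if $u\in U$ and $\pi_k(v)=\pi_k(u)$, then $v=T_k^i(u)\in U$ for some $i$ by simple transitivity on the fiber. Consequently $U=\pi_k^{-1}(\pi_k(U))=\pi_k^{-1}(Z)=\tZ$, which gives the decomposition; the cyclic relation $T_k(Z_j')=Z_{j+1}'$ then holds by construction.

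Part (b) is a one-line corollary. Given $\tg_k$-minimal $Z',Z''\subset S_k$ with $\pi_k(Z')=\pi_k(Z'')$, apply (a) to this common image to partition its full lift into the $T_k$-translates of $Z'$; since $Z''$ is $\tg_k$-minimal and sits inside this partition, it must coincide with one $T_k^p(Z')$. The reverse implication is immediate from $\pi_k\circ T_k=\pi_k$.

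For (c) I would lift to the universal cover. By the definition of rotation type $(p,q)$ there is $x'\in\R$ with $\tg^q(x')=x'+p$, so for $y=p_k(x')\in S_k$ we have $\tg_k^q(y)=T_k^p(y)$. Since $T_k$ acts on the fiber $\pi_k^{-1}(x)$ as a single $k$-cycle, $\langle T_k^p\rangle$ has order $k/\gcd(k,p)=k/m$, so the $\tg_k$-orbit of $y$ first returns to $y$ at time $q\cdot(k/m)=kq/m$ and meets $\pi_k^{-1}(x)$ in exactly $k/m$ points. Since $\pi_k^{-1}(o(x,g))$ contains $kq$ points in all and each orbit contributes $kq/m$, there are exactly $m$ distinct orbits; setting $x_j'=T_k^{j-1}(x_1')$ in the fiber yields the stated cyclic labeling by $T_k$. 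The one genuinely nonroutine step in the whole argument is the saturation observation in (a): without exploiting transitivity of $\langle T_k\rangle$ on fibers one would have to argue by counting sheets or intersections, which is awkward since $Z$ may be a Cantor set; everything else is bookkeeping with cyclic groups.
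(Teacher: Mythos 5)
Your argument is correct, and parts (b) and (c) track the paper's proof closely: the paper also deduces (b) from the decomposition in (a) together with the fact that minimal sets are disjoint or equal, and in (c) it likewise reduces to the action $n\mapsto n+p \bmod k$ of $\tg_k^q$ on the fiber $\pi_k^{-1}(x)\cong \Z_k$ (your ``$kq$ points divided by $kq/m$ per orbit'' count is equivalent bookkeeping; like the paper, it quietly uses that every point of $\pi_k^{-1}(o(x,g))$ lies on the orbit of a fiber point, which holds because $\tg_k$ permutes this finite set). Part (a) is where you genuinely diverge. The paper takes $Z_1'$ to be the forward orbit closure of an arbitrary point of $\tZ$ and proves minimality by hand: it first shows that for any $z',y'\in\tZ$ some deck translate $T_k^p(y')$ lies in $\Cl(o(z',\tg_k))$, and then derives a contradiction from a chain of strict inclusions obtained by iterating $T_k^p$. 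You instead invoke the abstract existence of a minimal subset of the compact invariant set $\tZ$, and replace the paper's recurrence argument by the saturation observation: the union $U$ of the $T_k$-translates of $Z_1'$ is $T_k$-invariant, hence a union of fibers, hence equals $\pi_k^{-1}(Z)=\tZ$. Your route is shorter and makes explicit two points the paper leaves implicit (that $m$ divides $k$, and that the translates exhaust the full lift), while the paper's route yields the slightly stronger conclusion that the orbit closure of \emph{every} point of $\tZ$ is one of the minimal pieces. Both are complete proofs of the stated theorem.
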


\begin{proof}
To prove (a) we begin with two preliminary facts with similar proofs. 
First, we show that for
any $z'\in\tZ$, $\pi_k(\otz) = Z$. Let $z = \pi_k(z')$ and
pick $y\in Z$. By minimality
there exists $g^{n_i}(z)\raw y$. Lifting and using
the compactness of $ S_k$ there is a subsequence $n_{i'}$ 
and a $y'\in S_k$ with $\tg_k^{n_{i'}}(z')\raw y'$. Thus
$g_k^{n_{i'}}(z) = \pi_k(\tg_k^{n_{i'}}(z'))\raw \pi_k(y')$ and
so $y = \pi_k(y')$.

Second, we show that for any $z',y'\in\tZ$, there exists an
$p$ with $T^p(y')\in \otz$. Let $z=\pi_k(z')$ and $y = \pi_k(y')$.
By minimality again, we have $g^{n_i}(z)\raw y$. Lifting and passing
to a subsequence, there is a subsequence $n_{i'}$ 
and a $y''\in S_k$ with $\tg_k^{n_{i'}}(z')\raw y''$. Thus
$\pi_k(y'') = y$ also, so there is a $p$ with $y'' = T_k^p(y')$
and so $T_k^p(y')\in \otz$.

Now for the main proof, pick $z'\in\tZ$ and let $Z_1' = \otz$,
so by the first fact, $\pi_k(Z_1') = Z$. We now show $Z_1'$ is minimal
under $\tg_k$. If not, there is a $y'\in Z_1'$ with 
$\oty \kludge \otz$. By the second preliminary fact, there is some
$p$ with
 $$\Cl(o(T_k^p(z'), \tg_k)) \subset \oty \kludge \otz.$$
Acting by the homeomorphism $T_k^p$ and iterating the strict inclusions
we have 
$$
\otz = \Cl(o(T_k^{pk}(z'), \tg_k)) \kludge 
\Cl(o(T_k^{p(k-1)}(z'), \tg_k)) \kludge \dots
\kludge \Cl(o(T_k^p(z'), \tg_k)) \kludge \otz
$$
a contradiction, so $\tg_k$ acting on $Z_1'$ is minimal. Thus
since $\tg_k T_k = T_k \tg_k$, $\tg_k$ acting on each $Z_j' := \tg_k^j(Z_1')$
is minimal. Now minimal sets either coincide or are disjoint, so there is
a least $m$ with $T_k^{m+1}Z_1' = Z_1'$.

For the proof (b), assume $\pi_k(Z') = \pi_k(Z'')$.
Now $Z:= \pi_k(Z')$
is minimal under $g$ and thus since $Z' \subset \pi_k^{-1}(Z)$ and
minimal sets are always disjoint or equal, using \eqref{already}
we have that $Z' = Z_j'$ for some $j$. Similarly,
$Z'' = Z_{j'}'$ for some $j'$, and thus $Z' = T^p{Z_1'}$
and $Z'' = T^{p'}{Z_1'}$ and so $Z'' = T^{p'-p}(Z')$ as required.

Now for (c), Since the deck group of $S_k$ is $\Z_k$ there is
a natural identification of $\pi_k^{-1}(x)$ with  $\Z_k$
with $x_1'$ identified with zero. Since $\tg_k^q(\tx) = T^p\tx$ in 
$\R$ the
induced action of $\tg_k^q$ on $\Z_k$ is $n \mapsto n+p \mod k$.
An easy elementary number theory argument yields that this action has 
exactly $\gcd(p,k)$ distinct orbits. Thus $\tg_k^q$ has exactly
$\gcd(p,k)$ distinct orbits when acting on $\pi^{-1}_k(x)$. But 
$x_i', x_j'\in\pi^{-1}_k(x)$ are on the same $\tg_k^q$
orbit if and only if they are on the same $\tg_k$ orbit and each 
orbit in $\pi^{-1}_k(o(x,g))$ contains at least one point from
$\pi^{-1}_k(x)$. Thus $\tg_k$ acting on  $\pi^{-1}_k(o(x,g))$ has exactly
$\gcd(p,k)$ orbits. The rest of the form of \eqref{already2}
follows from part (a).
\end{proof}
While it is not used here a similar result holds
for $\Z$ covers and their cyclic quotients when a map
in the base has a lift that commutes with the deck transformations.

\begin{remark}\label{rk1}
Some  special cases of (c) are worth pointing out. If $\gcd(p,k) = 1$,
then $x$ lifts to a single period $qk$ orbit in $S_k$. 
If $p=k$, then $x$ lifts to a $k$ different period-$q$ orbits in $S_k$.
When $k=2$, there is a simple dichotomy.
When $p$ is odd, $x$ lifts to one period $2q$ orbit and
when $p$ is even, $x$ lifts to a pair of period $q$ orbits.
\end{remark}

\subsection{Rotation number and interval}\label{rotsection}
 For $\tg:\R\raw\R$ a fixed lift of a degree one $g:S^1\raw S^1$ 
define the \textit{rotation number} of $x'\in \R$ as 
\begin{equation}\label{rotdef}
\rho(x', \tg) = \lim_{n\raw\infty} \frac{\tg^n(x')-x'}{n}
\end{equation}
when the limit exists. Note that 
this value  depend in a simple way on the choice
of lift $\tg$ of $g$, namely, $\rho(x', \tg + m) = \rho(x', \tg) + m$.
In most cases below there will be a preferred lift of
a given $g$ that will be used in \eqref{rotdef} and
we define $\rho(x, g)=\rho(x', \tg)$ where $x'$ is a lift of $x$. 
 When $g$ is understood we will just write $\rho(x)$. 
If $x$ is a periodic point of rotation type $(p,q)$ then $\rho(x) = p/q$.

If $Z$ is a $g$-invariant set, let 
\begin{equation*}\label{rotsetdef}
\rho(Z) = \{ \rho(x, g) : x\in Z\}
\end{equation*}
and $\rho(g) = \rho(S^1, g)$. 
The latter set has been proved to be
 a closed interval \cite{ito, NPT} and
thus it is called the \textit{rotation interval} of the map.
We shall also have occasion to use $\rho(\tg)$ with the obvious meaning.

There is a alternative way of computing the rotation interval
using upper and lower maps that
is now standard (\cite{CGT, Mcirc, Kadanoff, Bdcirc} and elsewhere). 
Given a lift of a degree-one 
circle map $\tg:\R\raw \R$ let $\tg_u(x) = \sup\{\tg(y)\colon
y \leq x\}$ and $\tg_\ell(x) = \inf\{\tg(y)\colon
y \geq x\}$. If $g_u$ and $g_\ell$ are their descents to $S^1$
they are both semi-monotone maps and so each of their rotation
sets is a single point (see Lemma~\ref{basic} below). 
The rotation interval of $g$ is 
\begin{equation}\label{int}
\rho(g) = [\rho(g_\ell), \rho(g_u)]. 
\end{equation}

To  define the rotation number of a $g$-invariant Borel probability measure
$\mu$ start by letting $\Delta_g:S^1\raw \R$ be $\Delta_g(x) = 
\tg(x') - x'$ which is independent of the choice of lift $x'$.
Then
\begin{equation}\label{measrot} 
\rho(\mu) = \int \Delta_g\; d\mu
\end{equation}
Note that when $\mu$ is ergodic 
by the Pointwise Ergodic Theorem  for $\mu$ a.e. $x$,
$\rho(x,g) = \rho(\mu)$.

For points, invariant sets and measures in the cyclic cover
$S_k$ under the preferred
lift $\tg_k$, there
are two ways to consider the rotation number. The
most common will be to project to the base and define 
\begin{equation}\label{rotk}
\rho_k(x, g) = \rho(\pi_k(x), g)
\end{equation}
For $\mu$ a $\tg_k$ invariant measure, let 
\begin{equation}\label{rotkmeas}
\rho_k(\mu) = \rho((\pi_k)_* \mu ).
\end{equation}

\begin{remark}\label{rescale}
The other way to work with rotation numbers in 
$S_k$ is to consider $\tg_k$ as a map of the circle itself.
To work on the standard circle we first rescale $S_k$ via
 $D_k:S_k\raw S^1$ via $D_k(\theta) = \theta/k$.
Note that $D_k$ is not a covering map but rather a coordinate
rescaling homeomorphism.  For $Z\subset S_k$, then
 $\rho(D_k Z, D_k\circ \tg_k\circ D_k^{-1})$ is the desired rotation number.
These two methods are related simply by 
$\rho_k(x, g) = k\rho(D_k Z, D_k\circ \tg_k\circ D_k^{-1})$ 
\end{remark}

\section{Semi-monotone degree-one maps}
\subsection{Definition and basic properties}
In this section we give the basics of a small, but crucial
expansion of the class of circle homeomorphisms, namely,
continuous maps whose lifts are \textit{semi-monotone}. They
share many of the properties of circle homeomorphisms and
are a standard and important tool in circle dynamics 

Thus we consider continuous, degree one $h:S^1 \raw S^1$ whose
lifts $\th$ to $\R$ satisfy
 $x_1' <x_2'$ implies $\th(x_1') \leq \th(x_2')$.\footnote{In topology
a monotone map is one with connected point inverses. In this sense
a semi-monotone map is monotone. On the other hand, considering
the point of view of order relations, semi-monotone is contrast with
monotone. We adapt the latter viewpoint.} Note that
this is independent of the choice of lift $\th$ of $h$. We shall
also call such maps \textit{weakly order preserving}.
Let $\cH$ be the collection of all such maps with the $C^0$-topology, 
and $\tcH$ denotes all their lifts.

A \textit{flat spot} for a $h\in\cH$ is a nontrivial
closed interval $J$ where $h(J)$ is a constant and for which there is
no larger interval containing $J$ on which $h$ is constant. A given
$h$ can have at most a countable number of flat spots
$J_i$ and we define the ``positive slope region'' of $h$ 
as $P(h) = S^1\setminus (\cup \Intt{J_i})$. The proof of the next
result is standard.

\begin{lemma}\label{basic} Assume 
$h\in \cH$ with preferred lift $\th$.
\begin{enumerate}[(a)] 
\item The rotation number $\rho(x, h)$ exists 
and is the same for all $x\in S^1$
 and so $\rho(h)$ is a single number.
\item The map $\rho:\tcH\raw \R$ is continuous.
\item If $\th,\th_1\in\tcH$ and  $\th_1 \leq \th$ then 
$\rho(\th_1)\leq \rho(\th)$.
\item If $\rho(h) = p/q$ in lowest terms then
 all periodic orbits have rotation type $(p,q)$ and the
recurrent set of $h$ consists of a union of such periodic orbits.
\item If $\rho(h)= \alpha\not\in\Q$ then $h$ has exactly one
recurrent set which is a minimal
set $Z$ and it is wholly contained in $P(h)$. Further, $h$ is
uniquely ergodic with the unique invariant measure supported
on $Z$.
\end{enumerate}
\end{lemma}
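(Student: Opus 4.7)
The plan is to adapt the classical Poincar\'e arguments for circle homeomorphisms, using only weak order preservation and $T$-equivariance $\th(x+1)=\th(x)+1$. The underlying estimate, following by induction from these two properties, is that for all $x,y\in\R$ and all $n\ge 0$,
\begin{equation*}
\bigl|(\th^n(x)-x) - (\th^n(y)-y)\bigr| \le 2.
\end{equation*}
This controls both existence and uniformity of the rotation number.

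For (a), (b), and (c) I would run short, standard arguments. Fix $x_0$ and set $a_n := \th^n(x_0)-x_0$; the identity $a_{n+m} = a_n + (\th^m(\th^n x_0) - \th^n x_0)$ together with the displayed estimate yields near-subadditivity and hence convergence of $a_n/n$ by a Fekete-style argument, giving (a), and the same estimate shows independence of $x_0$. For (b), $\rho(\th)$ is uniformly within $1/N$ of the $C^0$-continuous functional $\th\mapsto \th^N(0)/N$, giving continuity. For (c), $\th_1\le\th$ implies $\th_1^n\le\th^n$ by induction using weak monotonicity at each step, and the inequality descends to rotation numbers.

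For (d), assume $\rho(h)=p/q$ in lowest terms. Any $h$-periodic $x$ lifts to $x'$ with $\th^{q'}(x') = x'+p'$, so $p'/q'=p/q$ and $(p',q')=(jp,jq)$. I would reduce to rotation number zero by studying $\th^q - p$: it is again in $\tcH$ with rotation number $0$. For any $y\in\R$ not fixed by $\th^q-p$, weak monotonicity forces the sequence $\{(\th^q-p)^n(y)\}$ to be monotone in $n$ and bounded modulo $T$, hence convergent to a fixed point of $\th^q - p$ and so nonrecurrent. This shows the recurrent set of $\th^q-p$ is exactly its fixed point set, which proves $j=1$ and simultaneously yields the structure of the recurrent set of $h$.

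The main obstacle is (e), where flat spots prevent $h$ from being invertible. I would build the Poincar\'e monotone semiconjugacy $\phi:\R\to\R$ to translation by $\alpha$ in the standard way, using the displayed estimate to ensure $\phi$ is well-defined, nondecreasing, and $T$-equivariant; it descends to $\phi:S^1\to S^1$ with $\phi\circ h = R_\alpha\circ \phi$, where $R_\alpha$ denotes rigid rotation by $\alpha$. Let $Z$ be the complement of the union of all open arcs on which $\phi$ is constant, a countable family that includes the interior of every flat spot $J_i$ (since $h(J_i)$ is a single point, so $\phi$ is constant on $J_i$). Then $Z$ is closed, $h$-invariant, and $\phi|_Z$ conjugates $h|_Z$ to $R_\alpha$ up to countably many identifications; minimality of $R_\alpha$ gives that $Z$ is the unique minimal set, and $Z\subset P(h)$ follows from the inclusion of the flat-spot interiors in the collapsed family. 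Unique ergodicity is the Oxtoby-type consequence: every $h$-invariant probability measure pushes forward under $\phi$ to the unique $R_\alpha$-invariant (Lebesgue) measure, and since $\phi$ identifies only countably many arcs, this determines the measure uniquely.
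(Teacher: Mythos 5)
The paper gives no proof of this lemma — it states only that ``the proof of the next result is standard'' and points to the circle-dynamics literature — and your proposal is exactly that standard proof: the bounded-displacement estimate plus near-subadditivity for (a)--(c), reduction to the fixed-point set of $\th^q-p$ for (d), and the Poincar\'e monotone semiconjugacy to $R_\alpha$ together with the Oxtoby pushforward argument for (e). All the steps are correct (the key points being that weak monotonicity still yields $\th^n(y)\leq\th^n(x)\leq\th^n(y)+1$ for $y\leq x\leq y+1$, and that $\phi$ is constant on each flat spot because $\phi\circ h=R_\alpha\circ\phi$), so this fills in the omitted argument faithfully.
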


\begin{definition}\label{semidef}
The minimal set in (e) above is called a \textit{semi-Denjoy
minimal set}. 
More generally, an abstract minimal set  is called
semi-Denjoy if it is topologically conjugate to the semi-Denjoy
minimal set in a semi-monotone degree one circle map.
\end{definition}

\begin{remark}\label{semirk}
A semi-Denjoy minimal set looks like a usual Denjoy minimal
set with the added feature that endpoints of gaps can collapse
to a point under forward iteration. 
It is clear that any $h\in \cH$ is a near-homeomorphism
(the uniform limit of homeomorphisms). Thus as a consequence
of a theorem of Morton Brown (\cite{brown}), the inverse limit
$\ilim (h, S^1)$ is a circle and the natural extension is
a circle homeomorphism. In particular, the inverse limit
of a semi-Denjoy minimal set is a Denjoy minimal set.
For example, in case of single flat spot, the two endpoints of
the flat spot form a gap in the minimal set and they
have  same forward orbit. Taking the inverse
limit splits  open this orbit into a forward invariant gap.
\end{remark}

\subsection{Finitely many flat spots}
We next introduce a subclass of $\cH$ which includes
the semi-monotone maps considered in this paper. Let $\cH_\ell$
consist of those  $h\in\cH$ which have exactly $\ell$ flat spots and in $P(h)$
we require that $h$ is $C^1$ and $h' > 1$  where we have used a one-sided
derivative at the end points of the flat spots.

\begin{definition}
If $h\in\cH_\ell$ and $\rho(h)\not\in\Q$ has semi-Denjoy minimal set $Z$,
since $Z\subset P(h)$ for any flat spot $J$, $Z\cap \Intt(J) = 
\emptyset$.  The flat spot $J$ is called tight for $h$ if $\Fr(J)\subset Z$,
and  otherwise the flat spot is loose.
\end{definition}

\begin{lemma}\label{flat}
Assume $h\in \cH_\ell$. 
\begin{enumerate}[(a)]
\item If $\rho(h) = p/q$ in lowest terms then
the number of $(p,q)$-periodic orbits wholly contained
in $P(h)$ is at least one and at most $\ell$.
\item If $\rho(h) \not\in\Q$, a flat spot $J_i$ is loose if and only
if there is an $n>0$ and a $i'$ with $h^n(J_i) \in J_{i'}$.
In particular, there is always at least one tight flat spot.
\item If $Z$ is the maximal recurrent set of $h$ in $P(h)$,
then 
\begin{equation}\label{comp}
Z = S^1\setminus \bigcup_{n=0}^\infty\bigcup_{i=1}^\ell h^{-n}(\Intt(J_i)),
\end{equation}
and so if $o(x,h)\subset P(h)$ then $h^n(x)\in Z $ for some $n\geq 0$.
\end{enumerate}
\end{lemma}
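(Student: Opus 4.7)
For part (a), I would introduce the auxiliary function $G(x) = \tilde h^q(x) - p - x$ on the universal cover; its zeros in a fundamental domain are exactly the lifts of $(p,q)$-periodic points, and zeros lying in $P(h^q)$ (the complement of flat spots of $h^q$) correspond precisely to orbits wholly contained in $P(h)$. Compactness of $P(h)$ gives $h' \geq \lambda > 1$ uniformly, so $G$ is strictly increasing on each component of $P(h^q)$ while having slope $-1$ on each flat spot of $h^q$. Existence of a zero on $P(h^q)$ follows by a sign-change argument traversing $S^1$: $G$ can only pass from positive to negative across a flat spot of $h^q$ and never the reverse, so if $G$ had no zero on any $P(h^q)$-component then its sign would be forced to be constant around $S^1$, which is incompatible with $\rho(h)=p/q$. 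For the upper bound, semi-monotonicity shows the flat spots of $h^q$ in a fundamental domain are exactly the $q\ell$ single intervals $h^{-j}(J_i)$ with $0\leq j<q$ and $1\leq i\leq \ell$, so $P(h^q)$ has at most $q\ell$ components, each containing at most one zero of the strictly increasing $G$; since every $(p,q)$-orbit wholly in $P(h)$ contributes exactly $q$ zeros, we get at most $\ell$ such orbits.

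For part (c), the inclusion $Z \subseteq S^1 \setminus \bigcup_n h^{-n}(\Intt(J_i))$ is immediate from $Z\subset P(h)$ and forward invariance of $Z$. For the reverse, I would use the semiconjugacy $\pi:S^1\to S^1$ to $R_\alpha$ in the irrational case (the rational case is analogous with the finite union of $(p,q)$-orbits from (a) in place of $Z$). The non-singleton fibers of $\pi$ correspond bijectively to the gaps of $Z$, and the cascade structure of flat spots ensures that the forward $R_\alpha$-orbit of any non-singleton-fiber point $\theta$ exits the non-singleton-fiber set in finitely many steps: a loose flat spot cascades under $h$ to a tight flat spot in at most $\ell$ iterates (pigeonhole on the finite set of flat spots), after which the $\pi$-image of the tight flat spot's $h$-image lies in a singleton fiber inside $Z$. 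Hence for any $x$ with $o(x,h)\subset P(h)$, $h^m(x)\in Z$ for all sufficiently large $m$; a preimage lemma---that $h^{-1}(Z)\cap P(h)\subseteq Z$, using semi-monotonicity together with (b) to rule out a loose flat spot whose $h$-image lies in $Z$---then pulls the conclusion back to $x\in Z$.

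The equivalence in (b) follows directly from (c). For ``$\Leftarrow$'', if $h^n(J_i)\in J_{i'}$ for some $n\geq 1$ then the orbit of $\Fr(J_i)$ enters $J_{i'}$ and within one more iterate (if the image was at a flat-spot boundary) enters $\cup_k\Intt(J_k)$, so by (c) $\Fr(J_i)\not\subset Z$ and $J_i$ is loose. For ``$\Rightarrow$'', if $J_i$ is loose some endpoint $a\not\in Z$, and (c) gives $n, i'$ with $h^n(a)\in\Intt(J_{i'})$; since $a\in P(h)$ forces $n\geq 1$ and $h^n(a)=h^n(J_i)$, we obtain $h^n(J_i)\in J_{i'}$. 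For the existence of a tight flat spot, if every $J_i$ were loose the cascade map $J_i\mapsto J_{i'}$ on the finite set $\{J_1,\ldots,J_\ell\}$ would have a periodic cycle, producing a periodic orbit of flat-spot images and contradicting $\rho(h)\not\in\Q$. The main obstacle is the reverse direction of (c): this closely parallels classical Aubry-Mather analysis and requires careful bookkeeping of the cascade of loose flat spots to tight ones, together with the mutual consistency between (b) and (c) that rules out subtle edge configurations (e.g.\ a loose flat spot whose $h$-image lies at the frontier of a tight flat spot, which would appear to contradict looseness but is actually excluded by the structure of the semiconjugacy $\pi$).
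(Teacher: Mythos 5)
Your part (a) is a genuinely different and, I think, cleaner route than the paper's. The paper classifies the period-$q$ points into four types (unstable, superstable, semistable) and uses a fixed-point-index count; you instead run an intermediate-value argument on the displacement $G(x)=\tilde h^q(x)-p-x$, which is strictly increasing on components of $P(h^q)$ and has slope $-1$ on flat spots of $h^q$, so sign changes from $-$ to $+$ can only occur on $P(h^q)$ and the periodicity of $G$ forces a zero there; the upper bound follows since $P(h^q)$ has at most $q\ell$ components (your ``exactly $q\ell$ single intervals $h^{-j}(J_i)$'' is slightly off, as these preimages can merge, but only the upper bound on the number of components is needed). This is correct and arguably more elementary than the paper's argument.

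For (b) and (c) there is a genuine structural gap. The paper proves (b) \emph{first}, directly: if $J_i$ never cascades into another flat spot, a loose $J_i$ would give a nontrivial interval whose forward orbit stays in $P(h)$, contradicting uniform expansion; the converse pulls $J_{i'}$ back to an interval properly containing $J_i$. Only then is (b) used in (c). You reverse this order, but your proof of (c) already invokes ``a loose flat spot cascades to a tight flat spot,'' which is precisely the content of (b) — so deriving (b) ``directly from (c)'' is circular unless you first establish the cascade mechanism by the expansion argument anyway. More seriously, your route to the exact equality \eqref{comp} rests on the asserted preimage lemma $h^{-1}(Z)\cap P(h)\subseteq Z$, and the obstruction is exactly the edge configuration you name at the end: a loose flat spot $J_k$ with one endpoint in $Z$ and the other not, for which the excluded endpoint maps into $Z$ while never visiting any $\Intt(J_m)$. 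You claim this configuration is ``excluded by the structure of the semiconjugacy'' but give no argument, and it is not obviously excluded. Note that the paper itself never proves (and never uses) the preimage lemma: its proof of (c) establishes only the weaker consequence stated after \eqref{comp}, namely that an orbit staying in $P(h)$ eventually enters $Z$, by pushing the gap $[x,x']$ forward until it covers a flat spot and then riding the cascade to a tight one. If you want the full equality you must actually rule out the edge configuration; if you only need the ``eventually in $Z$'' statement (which is all the paper uses downstream), drop the preimage lemma and argue forward as the paper does, treating the rational case separately — there the gap endpoints are periodic and the ``gaps wander'' mechanism of the irrational case is unavailable, which is why the paper gives it a distinct monotonicity-plus-expansion argument rather than the ``analogous'' treatment you propose.
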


\begin{proof} For part (a), since $\rho(h) = p/q$ in lowest terms,
every periodic point has period $q$. By the conditions on 
the derivatives of $h\in\cH_\ell$, there are four classes 
of periodic points.
\begin{enumerate}[(1)]
\item $x$ is unstable with $Dh^q(x) >1$ and $o(x,h)\subset \Intt(P(h))$.
\item $x$ is superstable with $Dh^q(x) =0$ and 
$o(x, h) \cap (\cup_{i=1}^\ell \Intt(J_i)) \not= \emptyset$ while
$o(x, h) \cap (\cup_{i=1}^\ell \Fr(J_i)) = \emptyset$.
\item $x$ is superstable with $Dh^q(x) =0$ and 
$o(x, h) \cap (\cup_{i=1}^\ell \Intt(J_i)) \not= \emptyset$ while
$o(x, h) \cap (\cup_{i=1}^\ell \Fr(J_i)) = \emptyset$ and $o(x,h)$ contains
both left and right endpoints of flat spots.
\item $x$ is semistable with $Dh^q(x) =0$ from one side
and $Dh^q(x) >1$ from the other and 
$o(x, h) \cap (\cup_{i=1}^\ell \Intt(J_i)) = \emptyset$ while
$o(x, h) \cap (\cup_{i=1}^\ell \Fr(J_i)) \not= \emptyset$ and $o(x,h)$ contains
only left or only right endpoints of flat spots.
\end{enumerate}
This implies that all periodic points are isolated so there 
are finitely many of them.

Let $n_i$ be the number of periodic orbits of  type (1). Using the
fixed point index on $h^q$, $n_1 = n_2 + n_3$. Each orbit of
type (3) hits two flat spots and each of type (2) and (4) at least
one and a flat spot can't contain multiple periodic orbits and
so $n_1 + 2 n_3 + n_4 \leq\ell$. Thus  the total number of periodic orbits 
wholly contained on $P(h)$ is $n_1 + n_3 + n_4 
= n_2 + 2 n_3 + n_4 \leq \ell$

For part (b)
assume first that $h^n(J_i) \cap J_{i'} = \emptyset$ for all $n>0$ and
$i'$.
If $J_i$ was loose, there would exist $z_1, z_2\in Z$ with $z_1\leq J_i \leq z_2$
with at least one inequality strict and $Z\cap (z_1, z_2) = \emptyset$.
Thus $h((z_1, z_2))$ is a nontrivial interval with $h^n((z_1, z_2))
\subset P(h)$ for all $n>0$. This is impossible since $h$ is expanding
in $P(h)$ and so $J_i$ must be loose.

For the converse, say $h^n(J_i) \in J_{i'}$ for some $n>0$ and first
note $i=i'$ is impossible since that would imply $h$ has a periodic
point. Since $h^n(J_i)$ is a point there exists a nontrivial 
interval $[z_1, z_2]$ properly containing $J_i$ with
 $h^n([z_1, z_2]) = J_{i'}$ and so $(z_1, z_2)\cap
Z = \emptyset$ and so $J_i$ is a loose flat spot.

Finally, since $h^n(J_i) \cap J_{i} = \emptyset$ and there are finitely
many flat spots there is at least one $J_{i}$ with 
$h^n(J_i) \cap J_{i'} = \emptyset$ for all $n>0$ and $i'$.

For (c), assume $y$ is such that $o(y,h)\subset P(h)$ and
$o(y,h)\cap Z = \emptyset$. Let $x,x'\in Z$ with $y\in (x, x')$ and
$(x, x')\cap Z = \emptyset$. Because of the uniform expansion in $P(h)$
there is a flat spot $J$ and an $n\geq 0$ so that $J\subset h^n([x, x'])$.
If $\rho(h)\not\in\Q$, then by (c) for some $n'$, $h^{n+n'}([x, x'])$
is a tight flat spot and so $h^{n + n' + 1}(y) \in Z$.

Now assume $\rho(h) = p/q$. In this case $x$ and $x'$ are periodic
orbits and so $J \subset h^{n+w q}([x, x'])$ for all $w\geq 0$ and
so $ h^{n + wq} (y) \in h^{n+w q}([x, x'])\setminus \Intt(J)$ using the
assumption that $o(y,h)\subset P(h)$. But from (a), $ h^{n + wq} (J)
\subset J$. Thus by monotonicity, $ h^{n + wq} (y)$ is either 
always in the left component of $[x, x'] \setminus \Intt(J)$ 
or in the right component. This violates the expansion in 
$P(h)$ and so  for some $j> 0$, $h^j(y)\in Z$ which yields \eqref{comp}.
\end{proof}


\section{A class of bimodal circle maps and their positive slope orbits}
\subsection{The class $\cG$}\label{classG}
We introduce the class of bimodal, degree one maps of the
circle that will be the focus here. The class is defined
using properties of their lifts. We say that a lift $\tg:\R\raw\R$ is
 piecewise smooth if it is continuous and
there are $0\leq x_0 \leq \dots \leq x_n \leq 1$ so that 
$\tg$ is $C^2$ in each interval $(x_i, x_{i+1})$ and the right and
left hand derivatives exist at each $x_i$. 
\begin{definition}\label{Gdef}
Let $\tcG$ be the class of all  $\tg:\R\raw\R$ with
the following properties.
\begin{enumerate}[(a)]
\item $\tg$ is piecewise smooth and $\tg(x'+1) = \tg(x')+1$.
\item There are a pair of points $0 = \xmin < \xmax < 1$ so
that $g'>1$ in $[\xmin, \xmax]$ and $g$ is monotone decreasing
in $[\xmax, \xmin + 1]$.
\item $\xmin \leq \tg(\xmin) < \tg(\xmax) \leq \xmax + 1$
\end{enumerate}
The class $\cG$ consists of all $g:S^1\raw S^1$ which have
a lift in $\tcG$.
\end{definition}

Note that without loss of generality we have assumed that $\xmin = 0$. 
Also by assumption, $\xmin$ and $\xmax$ are a nonsmooth local
minimum and maximum respectively. It follows from 
\eqref{int} that $g\in\cG$ implies $\rho(g)\subset [0,1]$. 
\medskip

\noindent\textbf{Standing assumption:} From this point on
 $g$ denotes a given element of $\cG$ and its preferred lift
is the one with $\tg\in\tcG$.

\begin{remark}\label{generalg} Using the Parry-Milnor-Thurston Theorem for
degree-one circle maps  a general bimodal
$h$ with $\rho(h)\subset (0,1)$ and not a point is semiconjugate to 
a PL map $g\in\cG$\footnote{The result for circles doesn't
seem to be stated and proved anywhere in the literature, but
as noted in \cite{BB} the proof in \cite{misbook} works 
for the circle with minimal alteration}. Point inverses of the semiconjugacy are either
points or a closed interval.  Thus using standard results
from one-dimensional dynamics and various hypotheses most of the
results of the paper can be transferred with appropriate
alterations to a general bimodal map. 
\end{remark}

\subsection{the model map}
We will use a  model map $f_m$ as a specific example throughout
the paper. We shall see that, in a sense, it is 
the largest map in the class $\cG$ and
all other maps $g\in\cG$ may be considered subsystems.

\begin{figure}[htbp]
\begin{center}
\includegraphics[width=0.4\textwidth]{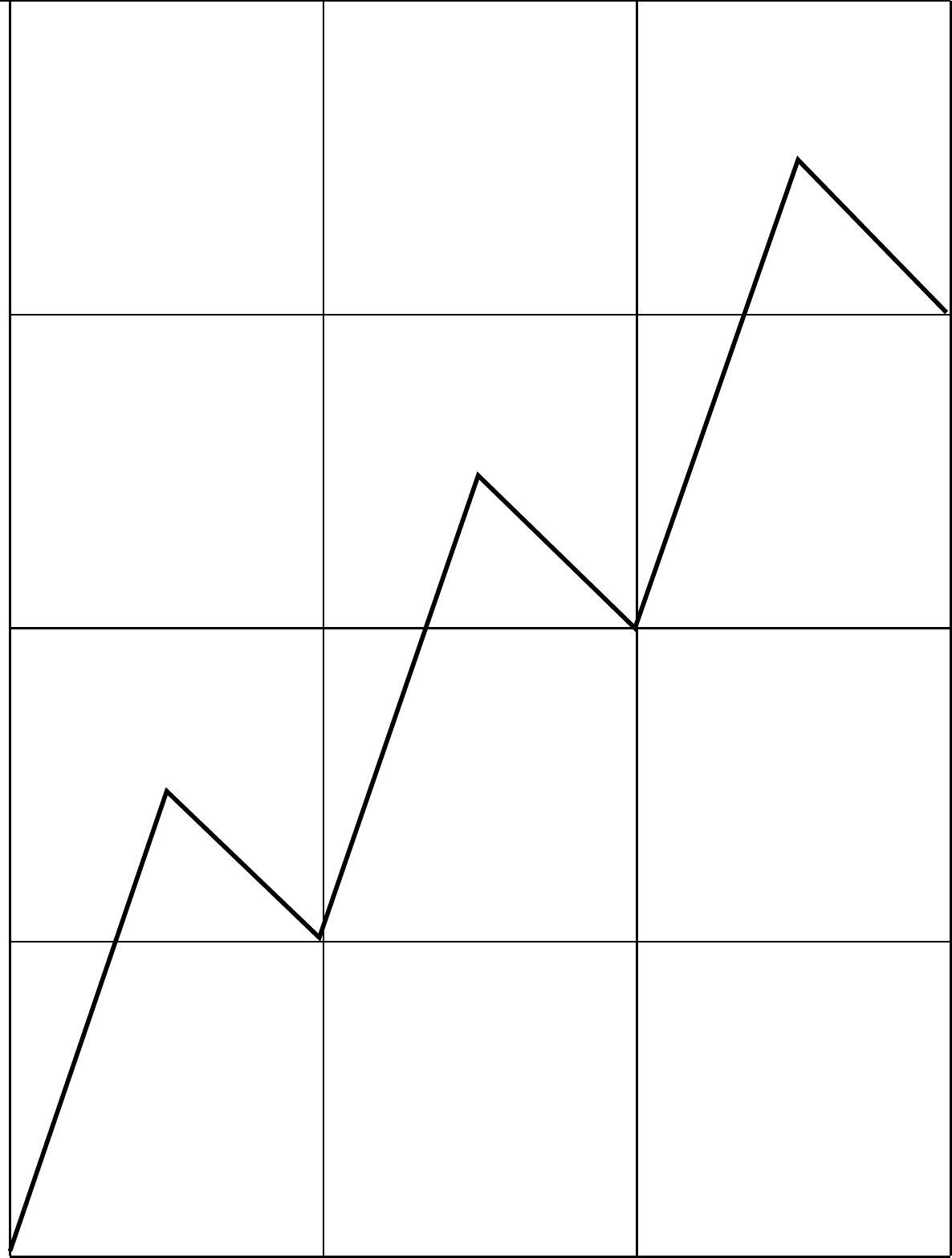}
\end{center}
\caption{The model map $f_m$ in the $3$-fold cover}
\label{model}
\end{figure}

Define $\tf_m:\R\raw\R$ on $[0,1]$ as 
\begin{equation*}\label{maindef}
	\tf_m(x) = \begin{cases}
		3 x  & \text{for}\ 0  \leq x \leq 1/2\\
		-x + 2  & \text{for}\  1/2 \leq x \leq 1
	\end{cases}
\end{equation*}
and extend it to $\R$ to satisfy $\tf_m(x+1) = \tf_m(x) + 1$.
Let $f_m$ be the projection of $\tf_m$ to $S^1$. See Figure~\ref{model}.
Thus, $\xmin = 0$, $\xmax = 1/2$, and $\rho(f_m) = [0,1]$.

\subsection{Positive slope orbits}\label{pso}
Given $g\in\cG$ with preferred lift $\tg$ let $\Lambda_\infty(g)$ 
be the points $x'\in\R$
 whose orbits under $\tg$ stay in
the closed region where $\tg$ has positive slope, so  
\begin{equation*}\label{lamdainfdef}
\Lambda_\infty(g) = 
\{x'\in \R : o(x',\tg)\subset \bigcup_{j=-\infty}^\infty [j, j+\xmax]\}.
\end{equation*}
We give $\Lambda_\infty(g)$ the total order coming from its
embedding in $\R$. Note that it is  both $\tg$ and $T$ invariant.

Now we treat the $k$-fold cover as $S_k = [0,k]/\mysim$ and
let $\Lambda_k(g)$ be the orbits that stay in the positive slope
region of $\tg_k:S_k\raw S_k$, so 
\begin{equation*}
\Lambda_k(g)= \{x'\in S_k : o(x',\tg_k)\subset \bigcup_{j=0}^{k-1} [j, j+\xmax]\}.
\end{equation*}
Alternatively, $\Lambda_k(g) = p_k(\Lambda_\infty(g))$ or
$\Lambda_k(g) = \pi_k^{-1}(\Lambda_1(g))$.

We discuss the restriction to positive slope orbits in Section~\ref{neghom}.

\noindent\textbf{Standing assumption:} Unless otherwise specified the
terminology ``physical kfsm set'' or just ``kfsm set'' carries
the additional restriction that it is contained in the positive slope
region of some $g\in\cG$.

\section{Symbolic description of positive slope orbits}
For a map $g\in \cG$  we develop in this section a symbolic
coding for the  orbits in $\Lambda_k$ for $k = 1, \dots, \infty$.
\subsection{the itinerary maps}\label{Idef}
We work first in the universal cover or $k=\infty$.
Since  $g\in\cG$, we may find points
$\zmax$ and $\zmin$ with $0 = \xmin < \zmax < \zmin < \xmax$
and $\tg(\zmax) = \xmax$ and $\tg(\zmin) = \xmin + 1$.
For $j\in\Z$ define a collection of intervals 
$\{I_j\}$ on $\R$
 by
\begin{equation}\label{address}
\begin{split}
I_{2j} &= [j, \zmax + j] \\
I_{2j + 1} &= [\zmin + j, \xmax + j]
\end{split}
\end{equation}
See Figure~\ref{figure3}.
Note that since $\tg([\zmax, \zmin]) = [\xmax, \xmax+1]$ we have that 
\begin{equation*}
\Lambda_\infty(g) = 
\{x'\in \R: o(x',\tg)\subset \cup_{j=-\infty}^\infty I_j\}.
\end{equation*}
Using $\{I_j\}$ as an address system with the dynamics $\tg$ 
let the itinerary map  be
	$\iota_\infty:\Lambda_\infty(g) \raw \Sigma_\Z^+$. Note that
$\Lambda_\infty$ is the good set and using expansion and the disjointness
of the address intervals, $\iota_\infty$ is a homemomorphism onto its
image.

\begin{figure}[htbp]
\begin{center}
\includegraphics[width=0.3\textwidth]{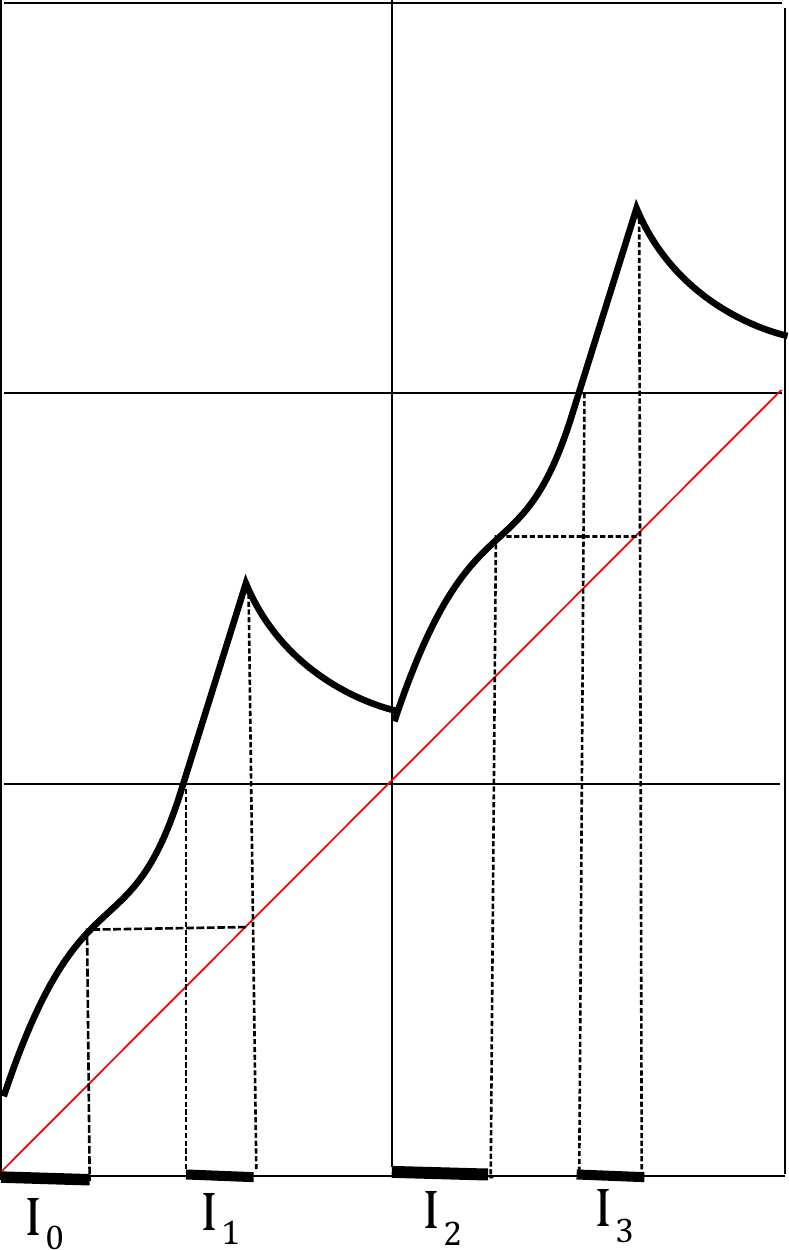}
\end{center}
\caption{The address intervals in the $3$-fold cover}
\label{figure3}
\end{figure}

Now passing to the $k$-fold cover, 
to code the positive slope orbits $\Lambda_k(g)$, treat
$S_k = [0,k]/\mysim$ and use the dynamics $\tg_k$ with the address
system $\{I_0, I_1, \dots, I_{2k-2}, I_{2k-1}\}$. This yields
an itinerary map $\iota_k:\Lambda_k\raw \Sigma_{2k}^+$ which is also
a homeomorphism onto its image.
 
\medskip

\noindent\textbf{Example: The model map} 
For the model map $f_m$ we have $\zmax = 1/6$ and $\zmin = 1/3$ and so 
$I_{2j} = [j, 1/6 + j]$ and  $I_{2j + 1} = [1/3 + j, 1/2 + j]$

\subsection{Symbolic analogs of covering spaces}
This section develops the necessary machinery for the complete
description of the image of the various itinerary maps.
We will need the symbolic analogs of the covering spaces and maps
described in Section~\ref{coverdef}. 
\begin{definition}\label{omegadef}
Define a subshift $\Omega_\infty \subset\Sigma_\Z^+$ 
by its  allowable transitions
\begin{equation}\label{transitions}
2j \raw 2j, \ 2j \raw 2j+1, \
2j+1\raw 2j + 2, \ 2j + 1 \raw 2j+3.
\end{equation}
For $k <\infty$ let 
$\Omega_k$ be the subshift of $\Sigma_{2k}^+$ with allowable
transitions as in \eqref{transitions} for $j = 0, \dots, 2k-1$
and indices reduced $\mymod 2k$.
\end{definition}
Since for $g\in\cG$ we have
$\tg(I_{2j})\subset I_{2j}\cup I_{2j+ 1}$ and 
$\tg(I_{2j+1})\subset I_{2j+2}\cup I_{2j+ 3}$ we have: 
\begin{lemma}\label{iotaacts} For $g\in\cG$ and $k = 1, \dots, \infty$,
$\iota_k(\Lambda_k(g))\subset \Omega_k$.
\end{lemma}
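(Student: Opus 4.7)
The plan is to verify directly that the four transitions in \eqref{transitions} exhaust all the possibilities for successive symbols in any $\iota_\infty$-itinerary, and then transfer the conclusion to the $k$-fold cover via $p_k$. The argument is a bookkeeping computation using the explicit formulas \eqref{address} for the address intervals together with the three constraints in Definition~\ref{Gdef}.

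I would handle $k=\infty$ first. Fix $x'\in\Lambda_\infty(g)$ and suppose $x'\in I_m$ for some $m\in\Z$; I need to show that $\tg(x')$ lies in $I_{m'}$ only for $m'$ one of the four successors allowed by \eqref{transitions}. Split into the two cases $m=2j$ and $m=2j+1$. For $m=2j$, use the fact that $\tg$ commutes with the deck transformation $T$ together with monotonicity of $\tg$ on $[\xmin,\xmax]$ (Definition~\ref{Gdef}(b)) to conclude
\begin{equation*}
\tg(I_{2j}) \;=\; [\tg(j),\,\tg(\zmax+j)] \;=\; [\tg(0)+j,\,\xmax+j],
\end{equation*}
where I have used $\tg(\zmax)=\xmax$ by definition of $\zmax$. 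Since $\tg(0)\ge 0$ by Definition~\ref{Gdef}(c), this interval is contained in $[j,\xmax+j]$. Among all address intervals $\{I_m\}$ only $I_{2j}=[j,\zmax+j]$ and $I_{2j+1}=[\zmin+j,\xmax+j]$ meet $[j,\xmax+j]$; they are separated by the gap $(\zmax+j,\zmin+j)$, which is precisely the part of the positive slope region excluded from the address system. Because $x'\in\Lambda_\infty(g)$, the image $\tg(x')$ must also lie in some $I_{m'}$, forcing $m'\in\{2j,2j+1\}$.

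The second case $m=2j+1$ is the mirror of the first. Using $\tg(\zmin)=\xmin+1=j+1$ (by the choice of $\zmin$ and translation by $j$) and $\tg(\xmax+j)=\tg(\xmax)+j\le\xmax+1+j$ from Definition~\ref{Gdef}(c), monotonicity yields $\tg(I_{2j+1})\subset[j+1,\xmax+j+1]$. The only address intervals meeting this set are $I_{2j+2}=[j+1,\zmax+j+1]$ and $I_{2j+3}=[\zmin+j+1,\xmax+j+1]$, so the requirement $\tg(x')\in\Lambda_\infty(g)$ forces $m'\in\{2j+2,2j+3\}$. Combining the two cases shows $\iota_\infty(\Lambda_\infty(g))\subset\Omega_\infty$.

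For finite $k$, the map $\tg_k:S_k\to S_k$ is the descent of $\tg$ under $p_k:\R\to S_k$, and the address intervals $I_0,\dots,I_{2k-1}$ on $S_k$ are the $p_k$-images of the corresponding intervals on $\R$. Applying $p_k$ to the inclusions established above and reducing indices $\mymod 2k$ gives precisely the transitions defining $\Omega_k$, so $\iota_k(\Lambda_k(g))\subset\Omega_k$. There is no real obstacle here; the only mild subtlety is to remember that the address system deliberately omits the gap $(\zmax+j,\zmin+j)$ and that membership in $\Lambda_\infty(g)$ is what rules out images landing in such a gap.
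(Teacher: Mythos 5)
Your proof is correct and takes essentially the same route as the paper, which simply asserts the containments $\tg(I_{2j})\subset I_{2j}\cup I_{2j+1}$ and $\tg(I_{2j+1})\subset I_{2j+2}\cup I_{2j+3}$ without computation and reads off the lemma. Your write-up is in fact slightly more careful on the one genuine subtlety: $\tg(I_{2j})$ can meet the gap $(\zmax+j,\zmin+j)$ omitted from the address system, and it is membership of the orbit in $\Lambda_\infty(g)$ that excludes this possibility, exactly as you say.
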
 

Under the itinerary maps the spaces $\R$, $S_k$ and $S^1$ will
correspond to the shift spaces $\Omega_\infty$, $\Omega_k$ and
$\Omega_1 = \Sigma_{2}^+$. The dynamics on the ``physical spaces''
induced by $g$ will correspond to left shifts on the symbol
spaces. The shift spaces will also have the analogs of the covering
projections and deck transformations. These maps will be indicated
by a hat and defined using the action on individual symbols as follows.

The analogs of the covering translations are $\hT_\infty:\Omega_\infty
\raw \Omega_\infty$ given by $s\mapsto s+2$ for all $s\in\Z$ and 
$\hT_k:\Omega_k
\raw \Omega_k$ given by $s\mapsto s+2\mod 2k $ for all $s\in\Z$,
while the analogs of the covering maps are $\hp_k:\Omega_\infty\raw\Omega_k$
by $s\mapsto s\mod 2k$ and $\hpi_k:\Omega_k\raw\Sigma_{2}^+$
by $s\mapsto s\mod 2$. In the later we allow $k=\infty$ under
the convention that $2\infty = \Z$, yielding
$\hpi_\infty:\Omega_\infty\raw\Sigma_{2}^+$. Note then that
$\hpi_\infty = \hp_1$. A lift and the full lift are defined
as usual with, for example,  a lift of $Y\subset \Omega_1 = \Sigma_2^+$
to $\Omega_k$ is a set $Y'\subset\Omega_k$ with $\hpi_k(Y') = Y$.
Note that $\hT_k, \hpi_k$, and $p_k$ are all continuous.

The roles of the maps $g, \tg_k$ and $\tg$ in Section~\ref{coverdef} are
played by the various shift maps on the sequence spaces. For
clarity we use a subscript to indicate which space the shift
is acting on: $\sigma_k:\Omega_k\raw \Omega_k$. We again
allow  $k=\infty$. 
All the various maps satisfy the same commutativity relations as their 
un-hatted analogs. So, for example, $\hpi_k\hT_k = \pi_k$,
$\sigma_k \hT_k = \hT_k \sigma_k$, and $\hpi_k\sigma_k = \sigma_1 \hpi_k$.
The itinerary maps $\iota_k:\Lambda_k(g)\raw \Omega_k $ 
act naturally by transforming the spaces and maps of Section \ref{coverdef}
to their symbolic analogs as in part (b) of the next Lemma.

\begin{lemma}\label{fact1} For $k = 1, \dots, \infty$, 
\begin{enumerate}[(a)] 
\item $\Omega_k = \hp_k(\Omega_\infty)$
\item $\hpi_k \iota_k = \iota_1 \pi_k$
\item If $\us,\ut\in \Omega_k$ and $\pi_k(\us) = \pi_k(\ut)$, then
there exists a $n$ with $\us = \hT_k^n\ut$.
\end{enumerate}
\end{lemma}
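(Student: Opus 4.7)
My plan is to handle the three parts in order, each following by symbolic bookkeeping from the transition list~\eqref{transitions}. For part (a), the inclusion $\hp_k(\Omega_\infty)\subset\Omega_k$ is immediate: each allowed transition of $\Omega_\infty$ reduces modulo $2k$ to an allowed transition of $\Omega_k$. For the converse I will use the following key observation, which also drives part (c): in both $\Omega_\infty$ and $\Omega_k$ the increment $s_{i+1}-s_i$ (taken mod $2k$ in the finite case) is a function of the parity pair $(s_i\bmod 2,\,s_{i+1}\bmod 2)$, namely $0$ for even-even, $1$ for mixed, and $2$ for odd-odd. Given $\us\in\Omega_k$, I pick any preimage $\tilde s_0\in\Z$ of $s_0$ under $\hp_k$ and define $\tilde s_{i+1}=\tilde s_i+\delta_i$ recursively, where $\delta_i\in\{0,1,2\}$ is the unique increment forced by the pair $(s_i,s_{i+1})$. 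By inspection each step $\tilde s_i\to\tilde s_{i+1}$ is one of the four transitions in~\eqref{transitions}, so the resulting sequence lies in $\Omega_\infty$ and projects to $\us$ under $\hp_k$.

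Part (b) is a definition chase. The $i$-th symbol of $\iota_k(x)$ is the unique $j\in\{0,\dots,2k-1\}$ with $\tg_k^i(x)\in I_j$, while the $i$-th symbol of $\iota_1(\pi_k(x))$ is the $\{0,1\}$-address of $g^i(\pi_k(x))=\pi_k(\tg_k^i(x))$. The definition~\eqref{address} of the $I_j$ forces $\pi_k(I_j)\subset I_{j\bmod 2}$, so reducing the $\Omega_k$-address modulo $2$, which is precisely the action of $\hpi_k$, recovers the $\Omega_1$-address.

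For part (c), which I read with the hypothesis $\hpi_k(\us)=\hpi_k(\ut)$ since $\us,\ut$ are symbolic, the parity hypothesis says $s_i\equiv t_i\pmod 2$ for every $i$. By the parity-pair observation of part (a), the increments of $\us$ and $\ut$ then agree at every step, so $d_i:=(s_i-t_i)\bmod 2k$ is independent of $i$; since $d_0$ is even we write $d_0=2n$, and then $\us=\hT_k^n\ut$. The whole lemma is routine symbolic bookkeeping; the only nontrivial conceptual point, which I would flag as the heart of the argument, is that the transitions~\eqref{transitions} were engineered precisely so that the increment is a function of the parity pair, which mechanically yields both the covering lift in (a) and the uniqueness-up-to-deck-translation in (c).
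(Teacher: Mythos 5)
Your proof is correct and follows essentially the same route as the paper: the central observation in both is that the transitions~\eqref{transitions} make the increment $s_{i+1}-s_i$ a function of the parity pair, which yields the lift in (a) and the rigidity in (c). The only cosmetic difference is that the paper proves (c) in $\Omega_\infty$ and deduces the finite-$k$ case, whereas you argue directly modulo $2k$; both hinge on the identical parity argument, and your fuller write-up of (a) and (b) (which the paper dismisses as "easy to verify") is accurate.
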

\begin{proof} Parts (a) and (b) are easy to verify. For (c) 
we prove the case $k=\infty$ which implies the $k < \infty$ cases. Assume
$\hpi_\infty(\us) = \uw$. The transitions in \eqref{transitions} coupled
with the structure of $\uw$ imply that once $s_0$ is set the
parity structure of $\us$ determines all of $\uw$. Similarly, 
once $t_0$ is set all of $\ut$ is determined. Once again \eqref{transitions}
implies that if $s_0 - t_0 = 2n$ then for
all $i$, $s_i = t_i + 2n$. 
\end{proof} 

\begin{remark} It would perhaps seem more natural that $\Sigma^+_\Z$
should act as the symbolic universal cover of $\Sigma_2^+$, but the
crucial covering space property expressed by (c) wouldn't hold in
this case. For example, if $\us =.1 3 1^\infty$ and 
$\ut = .1 5 1^\infty$ then   $\hpi_\infty(\us) = \hpi_\infty(\ut)$
but $\hT^n(\us) \not= \ut $ for all $n$.
\end{remark}

\subsection{Rotation numbers and sets}
We give the analogs of the definitions in Section~\ref{rotsection}
for the symbolic case.
For $\us\in\Sigma_2^+$ let
\begin{equation}\label{simrho}
\hrho(\us) = \lim_{n\raw\infty} \frac{1}{n+1} \sum_{i=0}^n s_i.
\end{equation}
when the limit exists. For $\hmu$ a shift invariant measure on 
$\Sigma_2^+$, let $\rho(\hmu) = \hmu([1])$. When $\hmu$ is ergodic,
by the Pointwise Ergodic Theorem, for $\mu$ a.e. $\us$,
$\hrho(\us) = \hrho(\mu)$
 
For $\hZ\in \Omega_k$ let $\hrho_k(\hZ) = \hrho(\hpi_k(\hZ))$
and for $\hmu$ a $\sigma_k$ invariant measure on $\Omega_k$, let
$\hrho_k(\hmu) = \hrho((\hpi_k)_*(\hmu))$.

\section{Topological conjugacies and the image of the itinerary maps}
In this section we develop the analog of kneading invariants for
the symbolic coding of the positive slope orbits for $g\in\cG$.

Recall that $\Sigma_2^+$ is given the lexicographic order. Assume
 $\ukappa_0, \ukappa_1\in\Sigma_2^+$ satisfy 
\begin{equation}\label{kappa}
\ukappa_0 \leq o(\ukappa_i, \sigma) \leq \ukappa_1
\end{equation}
 for $i = 0,1$.
The corresponding \textit{dynamical order interval} is
$$\lrk = \{ \us\colon \ukappa_0 \leq o(\us, \sigma) \leq \ukappa_1\}
$$ 
 
Returning to $g\in\cG$, note that $\tg(I_0) \subset [\xmin, \xmax]$ and 
$\tg(I_1) \subset [\xmin, \xmax]$ while 
$\tg([\zmax, \zmin]) = [\xmax, \xmin + 1]$. This implies that
$\Lambda_1(g) \subset [\xmin, \xmax]$. Since
$\Lambda_1(g)$ is compact  we may define
$\ukappa_0 = \ukappa_0(g) =\iota_1(\min(\Lambda_1))$  and 
$\ukappa_1 = \ukappa_1(g) = \iota_1(\max(\Lambda_1))$. By construction
these $\ukappa$'s satisfy \eqref{kappa}.

We showed above that $\iota_k(\Lambda(g))\subset \Omega_k$. The next
theorem says that the image is as constrained by the dynamical
order interval $\lrk$. Accordingly for $k=1, \dots, \infty$ we define
$\hLambda_k(g) = \Omega_k \cap \hpi_k^{-1}(\lrk)$ and note that
this is a $\sigma_k$ invariant set.
\begin{theorem}\label{conjugacy} Assume $g\in\cG$ and construct $\kappa_0$ and
$\kappa_1$ from $g$ as above. Then for $k = 1, \dots, \infty$
the itinerary map $\iota_k$ is a
topological conjugacy from $(\Lambda_k(g), (\tg_k)_{\vert \Lambda_k(g)})$
to $(\hLambda_k(g), \sigma_k)$. Further, $\iota_\infty$ 
is order preserving.
\end{theorem}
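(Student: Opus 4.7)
The plan is to split the theorem into four claims: (a) $\iota_k$ intertwines $\tg_k$ with $\sigma_k$, (b) $\iota_k$ is a homeomorphism onto its image, (c) the image equals $\hLambda_k(g)$, and (d) $\iota_\infty$ is order preserving. Items (a) and (b) were already recorded in Section~\ref{Idef} using the disjointness of the address intervals $I_j$ (the two kinds of gaps $(\zmax+j,\zmin+j)$ and $(\xmax+j,j+1)$ are nonempty because $\zmax<\zmin$ and $\xmax<1$) together with the uniform expansion $\tg'>1$ on the positive-slope region. So the real content is (c) and (d).

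For (d), the intervals $\{I_j\}_{j\in\Z}$ are strictly ordered along $\R$ and $\tg$ is an increasing homeomorphism on each, so a strict inequality $x<y$ in $\Lambda_\infty(g)$ that is not detected by the initial symbols persists under $\tg$; uniform expansion guarantees eventual separation into distinct intervals, at which point lexicographic comparison returns the desired inequality.

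For the ``$\subset$'' half of (c), Lemma~\ref{iotaacts} gives $\iota_k(\Lambda_k(g))\subset\Omega_k$, and Lemma~\ref{fact1}(b) yields $\hpi_k(\iota_k(\Lambda_k(g)))=\iota_1(\pi_k(\Lambda_k(g)))=\iota_1(\Lambda_1(g))$, which sits inside $\lrk$ by the very definition of $\kappa_0,\kappa_1$ as the lex-extremes of the $\sigma$-invariant set $\iota_1(\Lambda_1(g))$. The reverse inclusion I would first reduce to $k=1$: given $\us\in\hLambda_k(g)$, set $\ut=\hpi_k(\us)\in\lrk$, and, assuming surjectivity at the base, pick $x\in\Lambda_1(g)$ with $\iota_1(x)=\ut$. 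Any lift $x'\in\pi_k^{-1}(x)$ automatically sits in $\Lambda_k(g)$ (staying in the positive-slope region is a $\pi_k$-invariant property), and $\hpi_k\iota_k(x')=\iota_1(x)=\hpi_k(\us)$, so Lemma~\ref{fact1}(c) supplies $n$ with $\iota_k(x')=\hT_k^n\us$. Because the deck translation $T_k:S_k\to S_k$ shifts addresses by $+2\bmod 2k$ one has $\iota_k T_k=\hT_k\iota_k$, hence $\iota_k(T_k^{-n}x')=\us$.

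The crux, and the expected main obstacle, is the base case $\iota_1(\Lambda_1(g))=\lrk$. My approach is to exploit the order preservation of $\iota_1$ on $[0,\xmax]$ (the same argument as (d) restricted to the two intervals $I_0,I_1$): the image is a compact, $\sigma$-invariant subset of $\lrk$ that attains the extremes $\kappa_0,\kappa_1$. Suppose for contradiction some $\us\in\lrk\setminus\iota_1(\Lambda_1)$; let $\us^-<\us<\us^+$ be its nearest neighbors in $\iota_1(\Lambda_1)$ with preimages $x^-<x^+$. Order preservation forces $(x^-,x^+)\cap\Lambda_1(g)=\emptyset$, so every point there has a finite escape time from $[0,\xmax]$. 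Letting $n$ be the first index where $\iota_1(x^-)$ and $\iota_1(x^+)$ disagree, monotone expansion of $\tg$ on each shared $I_{s_i}$ for $i<n$ forces $[\tg^n(x^-),\tg^n(x^+)]$ to straddle the forbidden gap $(\zmax,\zmin)$, with $\iota_1(x^-)_n=0$ and $\iota_1(x^+)_n=1$. A case split on $s_n\in\{0,1\}$, combined with $\sigma^m\us\in[\kappa_0,\kappa_1]$ for every $m$ and the extremality of whichever endpoint is relevant, should yield the contradiction. Pinning down exactly where the lexicographic comparison forces $\sigma^m\us$ to exceed $\kappa_1$ or fall below $\kappa_0$ is where I expect to spend the most effort.
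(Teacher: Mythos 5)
Your proposal is correct and follows essentially the same route as the paper: the conjugacy and order preservation come from disjointness of the $I_j$ plus expansion, the general $k$ case is reduced to $k=1$ via Lemma~\ref{fact1}(b),(c) and the deck translation exactly as in the paper, and the remaining content is the identity $\iota_1(\Lambda_1(g))=\lrk$. For that base case the paper simply asserts surjectivity onto the dynamical order interval as standard kneading theory, while your nearest-neighbor contradiction (pushing the gap $[x^-,x^+]$ forward until it straddles $(\zmax,\zmin)$ and then comparing $\sigma^{n+1}\us$ against $\kappa_0$ or $\kappa_1$ via the extremality of $\min\Lambda_1$ and $\max\Lambda_1$) is a legitimate and completable way to supply the detail.
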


\begin{proof} We first prove the first assertion for $k=1$ or that
$\iota_1(\Lambda_1(g)) = \lrk$. Let $\ast$ be an arbitrary symbol and define
a map $\chi:[0,\xmax]\sqcup\{\ast\}\raw  [0,\xmax]\sqcup\{\ast\}$ by
\begin{equation*}
\chi(x) = \begin{cases} 
\tg(x) \ \text{for} \ x\in I_0\\
\ast\ \text{for} \ x\in (\zmin, \zmax)\sqcup\{\ast\}\\
\tg(x) -1 \ \text{for} \ x\in I_1
\end{cases}
\end{equation*}
It easily follow that 
\begin{equation*}
\Lambda_1(g) = \{x\in [0, \xmax]\colon \chi^n(x)\not= \ast \ \text{for all}
\ n>0\}.
\end{equation*}
and if use the dynamics $\chi$ with the address system $I_0, I_1$
the resulting itinerary map $\Lambda_1\raw \Sigma_2^+$ is exactly
$\iota_1$. Now since $g$ is expanding on $I_0 \cup I_1$ and 
$I_0 \cap I_1 = \emptyset$, $\iota_1$ is an order preserving
conjugacy from $(\Lambda_1,g)$ to $(\iota_1(\Lambda_1),\sigma_1)$.
Finally, since $\min \Lambda_1(g) \leq o(x,g) \leq \max \Lambda_1(g)$
for all $x\in\Lambda_1(g)$ 
 we have that $\kappa_0 \leq o(\us,\sigma) \leq \kappa_1$
for all $\us\in\iota_1(\Lambda_1)$ and further that for any such $\us$
there is an $x\in\Lambda_1(g)$ with $\iota_1(x) = \us$. Thus
$\iota_1(\Lambda_1(g)) = \lrk$.

We now show that 
\begin{equation}\label{image}
\iota_k(\Lambda_k(g)) = 
\Omega_k \cap \hpi_k^{-1}(\lrk)
\end{equation}
We already know from Lemma~\ref{iotaacts} that the left hand side is in 
$\Omega_k$. Next, since $\pi_k(\Lambda_k(g)) = \Lambda_1(g)$
using Fact~\ref{fact1}(b) and the first paragraph of the proof we have 
\begin{equation}\label{image2}
\lrk = \iota_1(\Lambda_1(g)) = \iota_1(\pi_k(\Lambda_k(g)))
= \hpi_k\iota_k(\Lambda_k(g))
\end{equation}
so the left hand side of \eqref{image} is also in $\hpi_k^{-1}(\lrk)$.

Now assume that $\us$ is in the right hand side of \eqref{image}. Certainly
then $\hpi_k(\us)\in\lrk$ and so there is an $x\in\Lambda_1(g)$ with
$\iota_1(x) = \hpi_k(\us)$. Pick a lift $x'\in\Lambda_k(g)$ with 
$\pi_k(x') = x$. Again using Lemma~\ref{fact1}(b)
\begin{equation}\label{eq3}
\hpi_k(\us) = \iota_1(x) = \iota_1\pi_k(x')  = 
\hpi_k\iota_k(x').
\end{equation}
Thus using Lemma~\ref{fact1}(c),
 there is an $n$ with $\iota_k(x') = \hT_k^n(\us)$
and so 
\begin{equation*}
\iota_k\hT_k^{-n} x' = \hT_k^{-n}\iota_k x' = \us
\end{equation*}
and $ \hT_k^{-n} x'\in \Lambda_k$. Thus $\us\in \iota_k(\Lambda_k)$
as required. 

For $\iota_\infty$ as with $\iota_1$, since the $I_j$ are disjoint
 and the $\tg_\vert I_j$ are expanding, we have that $\iota_\infty$
is an order preserving homeomorphisms onto its image. The fact that
it is a semiconjugacy follows because  it is an itinerary map.
\end{proof}

\noindent\textbf{Example: The model map} For the model map $f_m$ we have
 $\kappa_0 = .0^\infty$ and $\kappa_1 = .1^\infty$
and so in this case $\hLambda_k(\lrk)$ is the entire subshift
$\Omega_k$.

\begin{remark}\label{fact3}
\begin{enumerate}[(a)]
\item $\hrho\circ \iota_k = \rho$ (when defined) and 
$\hrho_k\circ \iota_k = \rho_k$
\item For $\mu$ a $g$ invariant measure supported
in $\Lambda_1(g)$, $\rho(\mu) = \mu(I_1)$
\end{enumerate}
\end{remark}
  
\section{$k$-fold semi-monotone sets}
While our eventual interest is in invariant
sets in the circle, it is convenient to first give definitions in the
universal cover $\R$ and the cyclic covers $S_k$.
\subsection{Definitions}
The next definition makes sense for any degree one map but for concreteness
we restrict to $g\in\cG$.
 \begin{definition}\label{kfolddef}
Let $g\in\cG$ have preferred lift $\tg:\R\raw \R$.
\begin{enumerate}[(a)]
\item A $\tg$-invariant set $Z'\subset\R$ is \textit{k-fold semi-monotone}
(kfsm) if $T^k(Z') = Z'$ and  $\tg$ restricted to $Z'$ is
weakly order preserving, or for $z_1', z_2'\in Z'$ 
\begin{equation*}
z_1' <  z_2' \ \ \text{implies} \ \ \tg(z_1') \leq \tg(z_2')
\end{equation*} 
\item A $\tg_k$-invariant set $Z\subset S_k$ is
\textit{k-fold semi-monotone} (kfsm) 
if it has a $\tg$-invariant lift $Z'\subset\R$ which is.
\end{enumerate}
\end{definition}
These definitions are independent
of the choice of lift $\tg$.
Note that the same terminology is used for sets in the universal
and cyclic covers and that implicit in being a kfsm set is
the fact that the set is invariant.
 
When $k=1$ the lift $Z'$ in
the definition must satisfy $T(Z') = Z'$ and $\pi(Z') = Z$
and so  $Z' = \pi^{-1}(Z)$, the full lift to $\R$.

\subsection{interpolation}
To say that $Z\subset S_k$ is $k$-fold semi-monotone means
roughly that it is semi-monotone treating $S_k$ as the usual
circle. To formalize this as in Remark~\ref{rescale} it will be useful
 to rescale $S_k$ to
$S^1$ using $D_k:S_k\raw S^1$ and consider
the map  $ D_k\circ \tg_k\circ D_k^{-1}$.

\begin{lemma}\label{tfae} The following are equivalent
\begin{enumerate}[(a)]
\item The  $\tg_k$-invariant set $Z\subset S_k$ is kfsm
\item  $DZ$  is $1$-fold semi-monotone under $ D_k\circ \tg_k\circ D_k^{-1}$
and there exists a semi-monotone circle map $h$ defined on $S_k$ which
interpolates $\tg_k$ acting on $Z$.
\item The lift $Z'\subset \R$ of $Z$ in Definition~\ref{kfolddef}(b) 
has the property that there is a continuous $H:\R\raw\R$ that interpolates
$\tg$ acting on $\tZ^*$,  is  weakly order preserving, and
satisfies $H(x + k) = H(x) + k$.
\end{enumerate}
\end{lemma}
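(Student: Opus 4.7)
The plan is to establish (a) $\Leftrightarrow$ (c), which carries the substantive content, and then to derive (b) $\Leftrightarrow$ (c) as a reformulation via the rescaling of Remark~\ref{rescale}. The direction (c) $\Rightarrow$ (a) is immediate: if $H$ is weakly order preserving on $\R$ and agrees with $\tg$ on the lift $Z'$, then $\tg$ restricted to $Z'$ is weakly order preserving, which together with the $T^k$- and $\tg$-invariance of $Z'$ is exactly the kfsm condition in Definition~\ref{kfolddef}.

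For (a) $\Rightarrow$ (c), I would first replace $Z'$ by its closure $\Cl(Z')$, which remains $T^k$-invariant, $\tg$-invariant, and weakly order preserving under $\tg$ by the usual continuity argument on convergent sequences. Because $\Cl(Z')$ is $T^k$-invariant and nonempty, it is unbounded in both directions, so its complement in $\R$ decomposes as a countable disjoint union of bounded open intervals $(a_i, b_i)$ with both endpoints in $\Cl(Z')$. I would then define $H$ to equal $\tg$ on $\Cl(Z')$ and to be the affine interpolation between $\tg(a_i)$ and $\tg(b_i)$ on each gap. Continuity at endpoints is automatic, and global weak order preservation is verified case-by-case: between two points in $\Cl(Z')$ it is given; between two points in the same gap the affine map has nonnegative slope since $\tg(a_i) \leq \tg(b_i)$; and the mixed cases follow by inserting the relevant endpoint and chaining. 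The identity $H(x+k) = H(x) + k$ reduces on $\Cl(Z')$ to $\tg(x+k) = \tg(x) + k$, which comes from $k$-fold iteration of $\tg T = T\tg$; on a gap it holds because $T^k$ permutes $\{(a_i, b_i)\}$ and shifts the interpolation data $(\tg(a_i), \tg(b_i))$ by $+k$ in the target, so the affine interpolations are compatible.

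For (b) $\Leftrightarrow$ (c), the lift of the rescaling $D_k:S_k\raw S^1$ to universal covers is $x\mapsto x/k$, and the lift of $D_k \circ \tg_k \circ D_k^{-1}$ to $\R$ is $x \mapsto \tg(kx)/k$. A continuous circle map $h$ on $S_k$ treated as $S^1$ via $D_k$ is semi-monotone precisely when its lift to the universal cover $\R$ of $S_k$ is a weakly order preserving map $H$ satisfying $H(x+k) = H(x) + k$, the latter being exactly the condition that $H$ descends to $S_k = \R/T^k$. Under this correspondence, $h$ interpolates $\tg_k$ on $Z$ iff $H$ interpolates $\tg$ on $Z'$, and the $1$-fold semi-monotonicity of $D_k Z$ under $D_k \tg_k D_k^{-1}$ translates directly into weak order preservation of $\tg$ on $Z'$. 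Hence (b) and (c) encode the same data.

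The main obstacle is the bookkeeping in the construction of $H$ for (a) $\Rightarrow$ (c), specifically ensuring that the affine interpolation on gaps respects $T^k$-periodicity. This reduces to the observation that $T^k$ acts equivariantly on both the gap collection and the endpoint values $\tg(a_i), \tg(b_i)$, which is a direct consequence of $\tg$ commuting with $T$. Once this is in place the remaining verifications are routine.
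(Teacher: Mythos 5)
The paper states Lemma~\ref{tfae} without proof (it is one of the standard facts the author leaves to the reader), and your argument is exactly the standard construction one would supply: close up the lift, interpolate affinely across the complementary gaps of $\Cl(Z')$, check order preservation and $T^k$-equivariance, and pass between $S_k$ and $\R$ by covering-space lifts. The proof is correct; the only point worth a word in a written version is that in passing from $h$ to $H$ one should either fix the lift of $h$ appropriately or, more cleanly, close the cycle as (a)$\Rightarrow$(c)$\Rightarrow$(b)$\Rightarrow$(a), with (b)$\Rightarrow$(a) read off from the $1$-fold semi-monotonicity clause alone.
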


We now restrict to positive slope orbits as in Section~\ref{pso} and 
collect together kfsm invariant sets in $S_k$ and their invariant measures.
We will comment on kfsm sets which intersect the negative slope region in 
Section~\ref{neghom}. We also restrict attention to invariant sets that
are recurrent.
\begin{definition}\label{spacedef}
Given $g\in \cG$ let $\cS_k(g)$ be all compact, recurrent
kfsm sets in $\Lambda_k(g)\subset S_k$ with the Hausdorff topology
 and $\cN_k(g)$ be all $\tg_k$-invariant, Borel
probability measures with the weak topology whose support is a $Z\in \cS_k(g)$. 
\end{definition}
 
\begin{remark}\label{compact}
 A standard argument from Aubry-Mather theory yields that the collection of
all kfsm sets is compact in the Hausdorff topology. Since
$\Lambda_k(g)$ is compact, the collection of positive slope
kfsm sets is also compact. However, since  $\cS_k(g)$ contains just the
 recurrent kfsm sets, it is not compact (see Section~\ref{holesec} and~\ref{neghom}.
 We show shortly that $\cN_k(g)$ is compact.
\end{remark}

\subsection{symbolic k-fold semi-monotone sets and the map $g$}
As with kfsm sets in the ``physical'' spaces $S_k$
and $\R$ we define their symbolic analogs in the symbol spaces $\Omega_k$ 
and $\Omega_\infty$ where we give the symbol spaces the lexicographic order.
\begin{definition}\label{symkfsm} $\ $
\begin{enumerate}
\item A $\sigma_\infty$-invariant set $\hZ'\subset\Omega_\infty$ 
is \textit{symbolic k-fold semi-monotone} (kfsm)
if $\hT_\infty^k(\hZ') = \hZ'$ and  $\sigma_\infty$ restricted to $\hZ'$ is
weakly order preserving, or for $\us, \ut\in \hZ'$ 
\begin{equation*}
\us <  \ut \ \ \text{implies} \ \ \sigma_\infty(\us) \leq \sigma_\infty(\ut).
\end{equation*}  
\item A $\sigma_k$-invariant set 
$\hZ\subset \Omega_k$ is
 \textit{symbolic k-fold semi-monotone} (kfsm)
if there is a $\sigma_\infty$-invariant lift $\hZ'$ to $\Omega_\infty$. 
(i.e., $\hp_k(\hZ') = \hZ$) which is kfsm.

\end{enumerate}

\end{definition}

Everything has been organized thus far to ensure that 
k-fold semi-monotone sets are preserved under the itinerary maps.
\begin{theorem}\label{symandreal} Given $g\in\cG$ 
 for $k = 1, 2, \dots, \infty$,
 a $\tg_k$ invariant set $Z\subset \Lambda_k(g)$ is kfsm
if an only if $\iota_k(Z)\subset \hLambda_k(g)$ is.
\end{theorem}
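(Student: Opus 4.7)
The plan is to transport the kfsm structure across the conjugacy provided by Theorem~\ref{conjugacy}. The key preliminary observation is a pair of intertwining identities for the universal-cover itinerary map $\iota_\infty$, namely
\begin{equation*}
\iota_\infty\circ T \;=\; \hT_\infty\circ\iota_\infty
\qquad\text{and}\qquad
\hp_k\circ\iota_\infty \;=\; \iota_k\circ p_k .
\end{equation*}
The first holds because, by \eqref{address}, $T$ sends $I_j$ to $I_{j+2}$, so every coordinate of the itinerary gets augmented by $2$; this matches the definition $\hT_\infty\colon s\mapsto s+2$. The second records that reducing the universal-cover address mod $2k$ computes the address of $p_k(x)$ in the system $\{I_0,\dots,I_{2k-1}\}$ on $S_k$. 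Combined with the fact, already in Theorem~\ref{conjugacy}, that $\iota_\infty$ is an order-preserving topological conjugacy from $(\Lambda_\infty(g),\tg)$ onto $(\hLambda_\infty(g),\sigma_\infty)$, these identities let me translate each ingredient of the kfsm definition—invariance under the dynamics, invariance under the $k$-th power of the deck translation, and weak order preservation—one-to-one between the physical and symbolic settings.

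For the forward direction, I take a physical kfsm $Z\subset\Lambda_k(g)$ with a $\tg$-invariant, $T^k$-invariant, weakly order-preserving lift $Z'\subset\R$ as in Definition~\ref{kfolddef}, and set $\hZ' := \iota_\infty(Z')\subset\Omega_\infty$. Invariance under $\sigma_\infty$ is immediate from the conjugacy; $\hT_\infty^k(\hZ')=\hZ'$ follows from the first intertwining identity iterated $k$ times; weak order preservation passes through because $\iota_\infty$ is order-preserving; and the second intertwining identity gives $\hp_k(\hZ')=\iota_k(p_k(Z'))=\iota_k(Z)$, so $\hZ'$ witnesses that $\iota_k(Z)$ is symbolic kfsm.

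For the converse, given a $\sigma_\infty$-invariant, $\hT_\infty^k$-invariant, weakly order-preserving symbolic lift $\hZ'\subset\Omega_\infty$ of $\iota_k(Z)$, I first verify $\hZ'\subset\hLambda_\infty(g)$, so that $Z' := \iota_\infty^{-1}(\hZ')\subset\Lambda_\infty(g)$ is well-defined. Indeed
\begin{equation*}
\hpi_\infty(\hZ') \;=\; \hpi_k(\hp_k(\hZ')) \;=\; \hpi_k(\iota_k(Z)) \;=\; \iota_1(\pi_k(Z)) \;\subset\; \lrk,
\end{equation*}
using $\hpi_\infty=\hpi_k\hp_k$, the hypothesis $\hp_k(\hZ')=\iota_k(Z)$, and Lemma~\ref{fact1}(b); combined with $\hZ'\subset\Omega_\infty$ this places $\hZ'$ in $\hLambda_\infty(g)=\iota_\infty(\Lambda_\infty(g))$. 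Running the two intertwining identities in reverse then shows $Z'$ is $\tg$-invariant, $T^k$-invariant and weakly order-preserving under $\tg$, and $p_k(Z')=Z$, so $Z$ is physical kfsm.

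The main obstacle is really only bookkeeping, namely matching the canonical physical $T^k$-invariant lift $p_k^{-1}(Z)$ to the canonical symbolic $\hT_\infty^k$-invariant lift $\hp_k^{-1}(\iota_k(Z))$ under $\iota_\infty$. Uniqueness of these lifts follows from the structural analysis of $\Omega_\infty$ underlying Lemma~\ref{fact1}(c): once the parity sequence $\hpi_\infty(\us)$ and the initial symbol $s_0$ are fixed the transitions~\eqref{transitions} determine $\us$ entirely, so any two lifts of the same $\hZ\subset\Omega_k$ that are both $\hT_\infty^k$-invariant must coincide. The $k=\infty$ case of the theorem is the degenerate one where $p_k$ and $\hp_k$ are the identity and the statement reduces to the conjugacy $\iota_\infty$ itself.
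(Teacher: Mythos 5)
Your proposal is correct and follows essentially the same route as the paper: the paper's proof transports $T^k$-invariance and weak order preservation through the order-preserving conjugacy $\iota_\infty$ via the intertwinings $\iota_\infty T^k=\hT_\infty^k\iota_\infty$ and $\iota_\infty\tg=\sigma_\infty\iota_\infty$, proving the $k=\infty$ case and asserting that $k<\infty$ follows. The only difference is that you explicitly carry out the finite-$k$ bookkeeping (matching witness lifts via $\hp_k\circ\iota_\infty=\iota_k\circ p_k$ and checking $\hZ'\subset\hLambda_\infty(g)$ before inverting $\iota_\infty$), which the paper leaves implicit.
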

\begin{proof}
We prove the $k=\infty$ case; the $k<\infty$ case follows.
Theorem \ref{conjugacy} shows that $\iota_\infty$ is an order preserving bijection.
Since $\iota_\infty T_\infty^k = \hT_\infty^k \iota_\infty$ we have that
$T_\infty^k(Z) = Z$ if and only if 
$\hT^k_\infty \iota_\infty(Z) = \iota_\infty(Z)$. Using the additional
fact that $\iota_\infty\tg = \sigma_\infty \iota_\infty$ we have 
that $\tg$ is weakly order preserving on $Z$ if and only if 
$\sigma_\infty$ is  weakly order preserving on $\iota_\infty(Z)$
\end{proof}

In analogy with Definition~\ref{spacedef} we collect
together the various symbolic kfsm sets and their invariant 
measures.
\begin{definition}\label{symdef} For $k<\infty$
given $g\in \cG$, let $\hcS_k(g)$ be all compact, invariant, recurrent
symbolic kfsm sets in $\hLambda_k(g)$ with the Hausdorff topology 
and $\hcN_k(g)$ be all $g$-invariant, Borel
probability measures with the weak topology whose support is a
 $\hZ\in \hcS_k(g)$. 
\end{definition}
\begin{lemma}\label{iotainduced} 
\begin{enumerate}[(a)] For $k < \infty$
\item  The map $\iota_k:\Lambda_k(g) \raw \hLambda_k(g)$
induces homeomorphisms
$\cS_k(g)\raw \hcS_k(g)$ and $\cN_k(g)\raw \hcN_k(g)$. 
\item The spaces $\cN_k(g)$ and $\hcN_k(g)$ are compact.
\end{enumerate}
\end{lemma}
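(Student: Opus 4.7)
The plan is to combine the topological conjugacy $\iota_k$ from Theorem \ref{conjugacy} and the preservation of the kfsm property by $\iota_k$ from Theorem \ref{symandreal} with standard functorial properties of the hyperspace Hausdorff topology and the weak topology on measures. The compactness of $\cN_k(g)$ in part (b) is the step that requires a genuine dynamical argument, and it is where the main obstacle lies.

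For part (a), $\iota_k$ is a homeomorphism between the compact metric spaces $\Lambda_k(g)$ and $\hLambda_k(g)$. It therefore induces a homeomorphism on the hyperspaces of nonempty compact subsets equipped with the Hausdorff metric, since $\iota_k$ and $\iota_k^{-1}$ are uniformly continuous. Invariance and recurrence are preserved because they are defined in terms of orbits, and the kfsm property is preserved by Theorem \ref{symandreal}, so the induced hyperspace homeomorphism restricts to a homeomorphism $\cS_k(g)\to\hcS_k(g)$. Analogously, the pushforward $\mu\mapsto(\iota_k)_*\mu$ is a homeomorphism between spaces of Borel probability measures in the weak topology; $\tg_k$-invariance transfers to $\sigma_k$-invariance, and the support condition $\operatorname{supp}(\mu)\in\cS_k(g)$ transfers to $\operatorname{supp}((\iota_k)_*\mu)=\iota_k(\operatorname{supp}(\mu))\in\hcS_k(g)$ by the first bijection.

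For part (b), by the homeomorphism it suffices to show $\cN_k(g)$ is compact. The set of Borel probability measures on the compact space $\Lambda_k(g)$ is weakly compact, and the subset of $\tg_k$-invariant measures is closed. It remains to show that if $\mu_n\in\cN_k(g)$ with $\mu_n\to\mu$ weakly, then $Z:=\operatorname{supp}(\mu)$ is a recurrent kfsm set. Passing to a subsequence one may assume $\operatorname{supp}(\mu_n)\to W$ in the Hausdorff topology, and Remark \ref{compact} gives that $W$ is kfsm. A standard weak-convergence argument (any open neighborhood of a point of $Z$ has positive $\mu$-measure, hence positive $\mu_n$-measure eventually, hence meets $\operatorname{supp}(\mu_n)$) shows $Z\subseteq W$. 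Since the weakly order-preserving condition is inherited by any closed $\tg_k$-invariant subset of a kfsm set, $Z$ is itself kfsm.

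The main obstacle is verifying that every point of $Z$ is recurrent, which I plan to handle using a semi-monotone interpolation. By Lemma \ref{tfae} there is a semi-monotone circle map $H$ on the rescaled $k$-fold cover that interpolates $\tg_k$ on $Z$. Since $H$ agrees with $\tg_k$ on $\operatorname{supp}(\mu)$, the measure $\mu$ is also $H$-invariant, and by Poincar\'e recurrence $\mu$-almost every point is $H$-recurrent. By Lemma \ref{basic}(d) and (e) the $H$-recurrent set is a closed set which is either a union of periodic orbits (when $\rho(H)\in\Q$) or a single minimal semi-Denjoy set (when $\rho(H)\notin\Q$); hence $Z$ is contained in it. In the rational case $Z$ is a union of periodic orbits, all of whose points are recurrent; in the irrational case the minimality of the $H$-dynamics on its minimal set forces the nonempty closed $H$-invariant subset $Z$ to equal the whole minimal set, again with every point recurrent. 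This closes the compactness argument.
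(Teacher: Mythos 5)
Your proof is correct, and its overall skeleton matches the paper's: part (a) via the conjugacy of Theorem~\ref{conjugacy} together with Theorem~\ref{symandreal}, and part (b) by showing $\cN_k(g)$ is closed in the compact space of invariant measures, using Hausdorff compactness of the kfsm sets and the standard support argument. Two local steps differ in a way worth recording. First, for the Hausdorff-topology half of (a) the paper proves that $\iota_k$ is H\"older (from the bound $g'<M$ on $P(g)$) to conclude it preserves Hausdorff convergence; your observation that a homeomorphism of compact metric spaces is uniformly continuous in both directions, hence induces a hyperspace homeomorphism, is more elementary and entirely sufficient. Second, and more substantively, the paper finishes (b) with the bare assertion that ``any invariant measure supported on [a kfsm set] $X$ must be supported on its recurrent set,'' whereas you actually prove that $\spt(\mu)$ is a \emph{recurrent} kfsm set: you pass to an interpolating semi-monotone map $H$ via Lemma~\ref{tfae}, note $\mu$ is $H$-invariant, apply Poincar\'e recurrence, and then use the structure of recurrent sets of semi-monotone maps (Lemma~\ref{basic}(d),(e)) to conclude every point of $\spt(\mu)$ is recurrent. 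This extra care is not gratuitous: for a general invariant set the recurrent points need not form a closed set, so the paper's one-liner is really leaning on exactly the semi-monotone structure you invoke. Your version makes that dependence explicit at the cost of a slightly longer argument.
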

\begin{proof}
For part (a) we know that 
 $\iota_k$ is a conjugacy that that takes kfsm sets to kfsm  sets which yields
that $\cN_k(g)\raw \hcN_k(g)$ is a homeomorphism. By hypothesis
any $g\in\cG$ is $C^2$ in $P(g)$ and so there is some $M>1$ 
with $g'< M$ on $P(g)$ and thus on all address intervals
$I_j$. It is standard that this implies  that $\iota_k$ is
H\"older with exponent $\nu = \log 2k/\log M$. This then  implies
that $\iota_k$ preserves Hausdorff convergence and so
 $\cS_k(g)\raw \hcS_k(g)$  is a homeomorphism.

For part (b), since the space
of all $\tg_k$ invariant Borel probability measures is compact
metric, it suffices to show that $\cN_k(g)$ is closed,  and so
assume $\mu_n\in \cN_k(g)$ and $\mu_n\raw\mu$ weakly with
$X_n := \spt(\mu_n)$ a recurrent kfsm set.

A noted in Remark~\ref{compact}
the collection of all kfsm sets in $\Lambda_k$ is compact in the
Hausdorff topology and so there exists a kfsm set $X$ and 
$n_i\raw\infty$ with $X_{n_i}\raw X$. A standard argument which we give
here shows that $\spt(\mu) \subset X$.  
If this inclusion does not hold, there exists an 
$x\in \spt(\mu) \cap X^c$, then
let $\epsilon = d(x, X)$. Since the atoms of $\mu$
are countable, we may find an $\epsilon_1 < \epsilon/4$ so
that letting $U = N_{\epsilon_1}(x)$ we have
so that $\mu(\Fr(U)) = 0$ and so $U$ is a continuity
set for $\mu$  thus  via a standard result (page 16-17 of \cite{bill}) 
 $\mu_{n_i}(U) \raw \mu(U) >0$
using the fact that  $x\in\spt(\mu)$. Thus for large enough $i$,
with $m = n_i$ we have $X_m\subset N_{\epsilon/4}(X)$
and so $\emptyset = U \cap X_m = U\cap\spt(\mu_m)$ with
$\mu_m(U)>0$ a contradiction. Thus $\spt(\mu) \subset X$.
Now any invariant measure supported on $X$ must be supported
on its recurrent set and so $\mu\in\cN_k(g)$, as required.
The compactness of $\hcN_k(g)$ follows from part(a).
\end{proof}

\noindent\textbf{Example: The model map} For the model map 
$f_m$, $\hLambda_k(f_m) = \Omega_k$, and so
the set $\hcS_k(f_m)$ is the collection of all symbolic 
recurrent kfsm sets in $\Omega_k$. Thus while the definition
of symbolic kfsm set is abstract and general by Theorem~\ref{symandreal}
 and Theorems~\ref{conjugacy} they
will share all the properties of ``physical'' kfsm sets.

\subsection{Rotation numbers and sets}
For $Z\in \cS_k(g)$ recall from section~\ref{rotsection} that 
$\rho_k(Z) = \rho(\pi_k(Z), g)$. 

\begin{lemma}\label{rotlemma} Assume $Z\in\cS_k(g)$,
\begin{enumerate}[(a)]
\item $\rho_k(Z)$ exists and is a single number.
\item If $\rho_k(Z) = \omega\not\in\Q$ then $Z$ is a semi-Denjoy minimal
set. 
\item If $\rho_k(Z) = p/q$ with $\gcd(p,q) = 1$, then $Z$ consists
of at least one and at most $k$ periodic orbits all with the same
rotation number and period equal to $q k/\gcd(p,k)$. 
\item $\rho_k:\cS_k(g)\raw \R$ and $\hrho_k:\hcS_k(g)\raw \R$ are continuous
\end{enumerate}
\end{lemma}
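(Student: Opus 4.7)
The plan is to apply the interpolation from Lemma~\ref{tfae} to realize each $Z\in\cS_k(g)$ as an invariant, recurrent subset of a semi-monotone map on $S_k$, and then read off parts (a)--(c) by invoking Lemma~\ref{basic} and Lemma~\ref{flat} for that interpolating map. From the lift $Z'\subset\R$ of $Z$ one obtains a continuous, weakly order preserving $H:\R\raw\R$ with $H(x+k)=H(x)+k$ that agrees with $\tg$ on $Z'$; rescaling via $D_k$ as in Remark~\ref{rescale} produces $\tilde h\in\tcH$. Because $H^n=\tg^n$ on $Z'$, Lemma~\ref{basic}(a) gives $\rho(x',\tg)=k\rho(\tilde h)$ for every $x'\in Z'$, which proves~(a). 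For (b), when $\rho(\tilde h)=\omega/k\notin\Q$, Lemma~\ref{basic}(e) says the descended map $h$ on $S_k$ is uniquely ergodic with its entire recurrent set equal to a semi-Denjoy minimal set $M\subset P(h)$; since $Z$ is nonempty, compact, $h$-invariant, and recurrent, minimality of $M$ forces $Z=M$, so $Z$ is semi-Denjoy.

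For (c), $\rho(\tilde h)=p/(kq)$ and $\gcd(p,q)=1$ give $\gcd(p,kq)=\gcd(p,k)=:d$, so the reduction to lowest terms is $(p/d)/((k/d)q)$; Lemma~\ref{basic}(d) then forces all periodic orbits of $h$ to have the claimed common period $kq/d$. To obtain the bound ``at most $k$'' I will arrange the interpolation $H$ so that it has exactly $k$ flat spots in a fundamental domain $[0,k]$ of $T^k$, one per hump of $\tg$: weak order preservation prevents $Z'$ from using the two sides of any hump inconsistently, and the clipping heights can be chosen to match $\tg$ wherever $Z'$ sits near each hump, placing $h$ in the family $H_{\vc}$ of Figure~\ref{figure1}. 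With this choice $h\in\cH_k$, the flat spots avoid $\Lambda_k(g)$, and $Z\subset\Lambda_k(g)\subset P(h)$, so Lemma~\ref{flat}(a) with $\ell=k$ caps the number of periodic orbits wholly in $P(h)$, and hence the orbits composing $Z$, by $k$. Nonemptiness and recurrence of $Z$ combined with Lemma~\ref{basic}(d) give the lower bound of at least one.

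Continuity in (d) will follow from the compactness of $\cN_k(g)$ (Lemma~\ref{iotainduced}(b)) together with weak continuity of the measure-theoretic rotation number. Given $Z_n\raw Z$ in Hausdorff in $\cS_k(g)$, pick $\mu_n\in\cN_k(g)$ with $\spt\mu_n\subset Z_n$; compactness furnishes a subsequence $\mu_{n_j}\raw\mu$ weakly, and the support-containment argument used inside the proof of Lemma~\ref{iotainduced}(b) places $\spt\mu\subset Z$. Formula~\eqref{measrot} combined with continuity of $\Delta_g$ yields $\rho(\mu_{n_j})\raw\rho(\mu)$, and since part~(a) makes $\rho_k$ constant on every kfsm set, $\rho_k(Z_{n_j})=\rho(\mu_{n_j})\raw\rho(\mu)=\rho_k(Z)$. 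Since every subsequence has the same sub-limit, the full sequence converges; continuity of $\hrho_k$ on $\hcS_k(g)$ then transfers through the conjugacy $\iota_k$ from Lemma~\ref{iotainduced}(a).

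The main obstacle is the bookkeeping for (c)---verifying that the Lemma~\ref{tfae}(c) interpolation can in fact be arranged to have exactly $k$ flat spots (one per hump of $\tg$ in a fundamental domain of $T^k$), so that $h$ lies in $\cH_k$ with $\ell=k$ and Lemma~\ref{flat}(a) delivers the sharp bound. Once the identification of $h$ with some $H_{\vc}$ of Figure~\ref{figure1} is in hand, everything else reduces cleanly to Lemmas~\ref{basic},~\ref{flat}, and~\ref{iotainduced}.
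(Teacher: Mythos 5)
Your proposal is correct and follows essentially the same route as the paper: interpolate via Lemma~\ref{tfae}, rescale with $D_k$, and read off (a)--(c) from Lemma~\ref{basic} and Lemma~\ref{flat} with $\ell=k$ flat spots. The only difference is in (d), where the paper simply cites the standard Aubry--Mather continuity of $\rho$ on kfsm sets, while you supply an explicit argument through compactness of $\cN_k(g)$ and weak convergence; that is a legitimate filling-in of the cited fact rather than a different approach.
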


\begin{proof}
By Theorem~\ref{tfae} there exists a continuous, semi-monotone $H:S_k\raw S_k$
which interpolates the action of $\tg_k$ on $Z$. Rescaling
to the standard circle let $H_k:S^1\raw S^1$ be defined
as $H_k:= D_k\circ H\circ D_k^{-1}$. By Lemma~\ref{basic}(a),
 $\rho(H_k) = \omega$ is
a single number and since $\rho_k(Z) = k \rho(D Z, H_k)$, (a) follows.
If $\rho_k(Z)\not\in\Q$ then $\rho(D Z, H_k)\not\in\Q$ and so by
 Lemma~\ref{basic}(e) 
$DZ$ and thus $Z$ is a semi-Denjoy minimal set yielding (b).

Now assume $\rho_k(Z) =  p/q$ in lowest terms and so $\rho(D Z, H_k) = p/(qk)$.
Written in lowest terms  
$$\frac{p}{qk} = \frac{p/\gcd(p,k)}{kq/\gcd(p,k)}.$$
But since $H_k$ is semi-monotone, its recurrent set is a collection
of periodic orbits and its rotation number in lowest terms
has their period as its denominator which is thus $q k/\gcd(p,k)$.
Since by assumption, $Z\subset\Lambda_k(g)$ we may choose $H$
to have $k$ flat spots  then using Lemma~\ref{flat}, $Z$ consists
of at least one and at most $k$ periodic orbits, finishing (c).
 
It is standard from Aubry-Mather theory
 that $\rho$ is continuous on the collection of all
kfsm sets and thus it is continuous restricted to
the recurrent kfsm sets. As for measures, since 
$\rho(\mu) = \int \Delta_g\; d\mu$ \eqref{measrot}
with $\Delta_g$ continuous, continuity follows from the definition
of weak convergence.
\end{proof}

\begin{definition}\label{clusterdef} If $Z\subset \cB_k$ and consists of
a finite collection of periodic orbits it is called a cluster.
\end{definition}

\begin{remark}\label{rk3}
\begin{enumerate}[(a)]
\item For the case of general recurrent symbolic kfsm $\hZ$
as we commented at the end of the last subsection we may consider
$\hZ\in\hLambda_k(f_m) = \Omega_k$ with $f_m$ the model map. 
Using the itinerary map 
$\iota_k:\Lambda_k(f_m) \raw \hLambda_k(f_m)$ we have from 
Theorem~\ref{symandreal} that
$(\iota_k)^{-1}(\hZ)$ is kfsm for $f_m$ and then all the
conclusions of the previous theorem hold for it. Then using
Theorem~\ref{conjugacy}, the conclusions of the previous theorem hold with the obvious
addition of hats in the appropriate places.
\item We shall need this implication of the symbolic case below.
If $\hZ\subset\Omega_k$ with $\rho_k(\hZ) = \alpha\not\in\Q$,
tnen there exists a continuous, onto 
$\phi:\hZ\raw S_k$ which is weakly order preserving,
$\phi\sigma_k = R_\alpha \phi$, and $\#\phi^{-1}(x) = 1$
for all but a countable number of $R_\alpha$-orbits on which 
$\#\phi^{-1}(x) = 2$.

\item Using Lemma~\ref{basic} a measure in $\cN_k(g)$ is either the unique
measure on a semi-Denjoy minimal set or a convex combination or
measures supported on the periodic orbits in a cluster.
\item A $Z\in\cS_k(g)$ is minimal if and only if it is uniquely
ergodic and similarly for $Z\in\hcS_k(g)$ 
\end{enumerate}
\end{remark}

\section{The HM construction}
For a given $g\in\cG$ at this point we have reduced the identification
of its positive slope kfsm sets to a question in symbolic dynamics. In this
section we answer this symbolic question via a generalization of the 
procedure of Hedlund and Morse. The generalization
  constructs all symbolic kfsm recurrent
sets for each $k$.

Since a linear order is essential to the notion of semi-monotone
 we will again begin
working on the line and  then project to cyclic covers.

\subsection{definition and basic properties}\label{Xdef}
Fix an integer $k>0$, a real number  $\omega\in (0,1)$,
and a vector $\vnu = (\nu_1, \dots, \nu_k)$ with $\nu_i\geq 0$
and  $\sum \nu_i =k -k\omega$. Such a pair $(\omega, \vnu)$ is
called \textit{allowable}.
Start with the intervals  defined for $0 \leq j \le k-1$ by
\begin{equation}\label{addressHM}
\begin{split}
X_{2j} &= \left(\sum_{i=1}^j \nu_i + j\omega, 
\sum_{i=1}^{j+1} \nu_i + j\omega\right)\\
X_{2j+1} &= \left(\sum_{i=1}^{j+1} \nu_i + j\omega,
 \sum_{i=1}^{j+1} \nu_i + (j+1)\omega\right)
\end{split}
\end{equation}
and then extend for  $\ell\in \Z$ and $0 \leq m \le 2k-1$
as $X_{\ell k + m} = X_m + \ell k$. Thus  each $X_{2j}$ has
width $\nu_{j+1}$ and each $X_{2j+1}$ has
width $\omega$ and the entire structure yields a $T^k$ invariant address system
under the dynamics $R_\omega(x) = x + \omega$ on $\R$

The good set is  $G_\infty$ depends on $k, \omega$ and $\nu$ and is given by
\begin{equation*}
G = \{ x'\in \R: o(x', R_\omega)\cap \partial X_i = \emptyset\ 
\text{for all}\ i\}
\end{equation*}
Note that $G$ is dense, $G_\delta$ and has full Lebesgue measure.
The itinerary map with respect to the given address system
is denoted $\zeta_\infty:G\raw \Sigma^+_{\Z}$.
\begin{definition}\label{Akdef}
Let $A_{k}(\omega,\vnu) = \Cl(\zeta_\infty(G))$.
\end{definition}
\begin{remark}\label{rk4}
By construction,
$A_{k}(\omega,\vnu)$ is $\sigma_\infty$  and $\hT_\infty^k$ invariant.
In addition, since for all $j$, 
$R_\omega(X_{2j}) \subset X_{2j} \cup X_{2j+1} $
and $R_\omega(X_{2j+1}) \subset X_{2j+2} \cup X_{2j+3} $ 
we have $\zeta_\infty(G_\infty)\subset \Omega_\infty$. Also, since
$\Omega_\infty$ is compact, $A_{k}(\omega,\vnu)\subset \Omega_\infty$
is also.
\end{remark}

\subsection{Cyclic covers}
We now return to the compact quotients where the recurrent dynamics
takes place and introduce measures into the HM-construction.

For fixed $k>0$ and allowable $(\omega, \vnu)$ 
 treat $\{X_0, \dots, X_{2k-1}\}\subset S_k =  [0,k]/\mysim$
as an address system under the dynamics given
by $R_\omega(x) = x + \omega \mod k$.
Define the good set $G_{k\omega\vnu}$ and on it define the itinerary map
 $\zeta_{k\omega\vnu}$. We will often suppress the dependence
of these quantities on various of the subscripted variables when
they are clear from  the context.
\begin{definition}\label{HMitin} Given $k$ and an allowable
 $(\omega, \vnu)$ define
the itinerary map $\zeta_k:G_k\raw \Sigma^+_{2k}$ as above.
 Let $B_{k}(\omega, \vnu) = \Cl(\zeta_k(G_{k}))\subset\Sigma_{2k}^+$ and
$\lambda_{k}(\omega, \vnu) = (\zeta_k)_*(m)/k$ where $m$ is the measure
on $S_k$ induced by Lebesgue measure on $\R$.
\end{definition}

\begin{remark}\label{rk2} $\ $
\begin{enumerate}
\item By construction
 $\hp_k(A_{ k}(\omega,\vnu)) = B_{k}(\omega, \vnu)\subset \Omega_k$.
and so  $\rho_k(B_k(\omega, \vnu)) = \omega$.
\item Let $W_k = \{(x, \omega, \vnu) \colon x\in G_{k\omega\vnu}\}.$
 It is easy to check that the map
$(x,\omega, \vnu)\mapsto \zeta_{k\omega\vnu}(x)$ is continuous on $W_k$.
\end{enumerate}
\end{remark}

 The next theorem  
describes the structure of   the $B_k(\omega, \vnu)$  and
shows that all symbolic kfsm sets are constructed by the HM 
procedure with $\omega$ equal to their rotation number.
\begin{theorem}\label{Bkstruc} $\ $
\begin{enumerate}[(a)]
\item For $\alpha\not\in\Q$, $B_k(\alpha, \nu)$ is 
a semi-Denjoy minimal set with unique invariant probability measure
$\lambda_k(\omega,\vnu)$.
  
\item For $p/q\in\Q$, 
$B_k(p/q, \nu)$ is a finite collection of periodic
orbits each with rotation number $p/q$ and period $qk/\gcd(p,k)$,
 and $\lambda_k(p/q, \nu)$
is a convex combination of the measures supported on the
periodic orbits.
\item A $\hZ\subset\Omega_k$ is a recurrent symbolic kfsm
set with $\rho_k(Z) = \omega$ if and only if 
$\hZ = B_k(\omega, \vnu)$ for some allowable $\vnu$. Thus the
collection of invariant probability measures supported on
symbolic recurrent kfsm set is exactly the collection of 
$\lambda_k(\omega, \vnu)$ for all allowable $(\omega, \vnu)$.
\end{enumerate}
\end{theorem}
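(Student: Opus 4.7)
The plan is to derive parts (a) and (b) directly from the HM construction via its natural semi-conjugacy to rigid rotation on $S_k$, then prove (c) by using the factor-map structure of Remark~\ref{rk3}(b) to compare a general recurrent symbolic kfsm set to the HM output and recover the parameter $\vnu$.

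For (a), with $\alpha$ irrational, $R_\alpha$ on $S_k$ is minimal and uniquely ergodic with Haar measure $m/k$ (using the rescaling of Remark~\ref{rescale}). Because the address intervals $\{X_i\}$ are arranged in order along $S_k$ and $R_\omega$ is order preserving in the universal cover, the lifted itinerary $\zeta_\infty$ is order preserving and continuous on its good set. Pushing forward $m/k$ via $\zeta_k$ produces $\lambda_k(\alpha,\vnu)$, supported on $B_k(\alpha,\vnu)$, and extending $\zeta_k^{-1}$ to the closure yields a continuous, onto, weakly order-preserving factor map $\hat\phi:B_k(\alpha,\vnu)\to S_k$ that intertwines $\sigma_k$ with $R_\alpha$ and is a.e.\ injective. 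Minimality follows because any minimal subset of $B_k(\alpha,\vnu)$ projects to a minimal subset of $S_k$ under $R_\alpha$, which must be all of $S_k$; unique ergodicity follows because every invariant measure pushes to $m/k$ and pulls back uniquely via a.e.\ injectivity. The resulting system fits Definition~\ref{semidef} as a semi-Denjoy minimal set via the symbolic analog of Lemma~\ref{tfae}.

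Part (b) is similar: with $\omega=p/q$ in lowest terms, the number-theoretic argument from the proof of Theorem~\ref{lift}(c) shows every $R_{p/q}$-orbit in $S_k$ is periodic of length $qk/\gcd(p,k)$, so each good-set orbit produces a periodic itinerary of this length, and passing to the closure contributes only finitely many additional periodic sequences coming from one-sided boundary limits. The pushforward $\lambda_k(p/q,\vnu)$ of Lebesgue then decomposes as a convex combination of atomic measures on these periodic orbits, weighted by the Lebesgue masses of their preimage arcs. With (a) and (b) in hand, the ``if'' direction of (c) follows, since order preservation of $\zeta_\infty$ makes $A_k(\omega,\vnu)$ symbolically kfsm in the sense of Definition~\ref{symkfsm}.

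For the ``only if'' direction of (c), first take $\omega$ irrational. Remark~\ref{rk3}(b) supplies a continuous, surjective, weakly order-preserving, a.e.-injective factor map $\phi:\hZ\to S_k$ with $\phi\sigma_k=R_\omega\phi$. Set $Y_i=\phi([i]\cap\hZ)$; weak order preservation forces each $Y_i$ to be a subinterval of $S_k$, and a.e.\ injectivity of $\phi$ makes $\{Y_i\}$ an a.e.\ partition. The transitions in~\eqref{transitions} give $R_\omega(Y_{2j+1})\subset Y_{2j+2}\cup Y_{2j+3}$, and since the $Y_i$'s sit in consecutive order, a length comparison yields $|Y_{2j+1}|\le \omega$. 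On the other hand, the rotation-number identity $\sum_j\hmu([2j+1])=\hrho_k(\hmu)=\omega$ combined with the unique-ergodicity identity $\hmu([i])=|Y_i|/k$ gives $\sum_j |Y_{2j+1}|=k\omega$, so each inequality is forced to be an equality: $|Y_{2j+1}|=\omega$ for every $j$. Setting $\nu_{j+1}=|Y_{2j}|$ yields an allowable $\vnu$ with $\sum\nu_{j+1}=k(1-\omega)$, and, up to a global rotation of $S_k$ (which leaves the shift-invariant set of itineraries unchanged), $\{Y_i\}$ coincides with the HM partition $\{X_i\}$, so $\hZ=B_k(\omega,\vnu)$. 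For rational $\omega$ one instead invokes the semi-monotone interpolation from Lemma~\ref{tfae} whose recurrent set is $\hZ$ and reads the $\nu_i$ off from the widths of its flat spots. The main obstacle I expect is the width rigidity $|Y_{2j+1}|=\omega$ in the irrational ``only if'' case, which is the hinge on which identification with the HM output rests: it marries a local constraint from the transition rules with a global sum constraint from the rotation number, and once it is secured, aligning $\{Y_i\}$ with $\{X_i\}$ and reproducing itineraries is routine bookkeeping.
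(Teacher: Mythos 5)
Most of your proposal is sound, and in places it takes a cleaner route than the paper. For (a) and (b) you argue directly from the semiconjugacy of the HM system onto rigid rotation of $S_k$; the paper instead proves only minimality (via Birkhoff's almost-periodicity criterion applied to a good-set orbit) and finiteness at this stage, and defers unique ergodicity, the semi-Denjoy identification, and the convex-combination statement until after (c), recovering them from Lemma~\ref{rotlemma} and Remark~\ref{rk3}(a) via the model map. Your direct route is legitimate (you do need to note that the inverse itinerary map extends to a continuous, order-preserving, a.e.\ injective factor map on the closure, and that a minimal subset mapping onto $S_k$ must contain the dense set of singleton fibers). In the irrational converse of (c), your width-rigidity argument --- $|Y_{2j+1}|\le\omega$ from the forbidden transition $2j+1\to 2j+1$, upgraded to equality by the global identity $\sum_j|Y_{2j+1}|=k\,\hrho_k(\hmu)=k\omega$ --- is a genuine alternative to the paper's two-sided argument, which also rules out $|X_{2j+1}|>\alpha$ pointwise by exhibiting a forbidden transition; both hinges hold.

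The gap is the rational converse of (c). ``Read the $\nu_i$ off from the widths of the flat spots'' of an interpolating semi-monotone map is not correct as stated: the HM parameters $\nu_i$ are widths of address intervals in the rotation coordinate on $S_k$ (the target of a semiconjugacy to $R_{p/q}$), not widths of flat spots in the phase space of an interpolated map, and for rational rotation number there is no ready-made semiconjugacy to transport one to the other --- the recurrent set is a finite union of periodic orbits and any order-preserving map to $(S_k,R_{p/q})$ collapses it onto a finite orbit. The substance of the rational case, which your sketch omits, is the discrete analog of your irrational argument: construct an order-preserving bijection $\phi$ from $\hZ$ onto an evenly spaced set $\cP\subset S_k$ conjugating $\sigma_k$ to $R_{p/q}$, blow each point up to an arc of length $\gcd(p,k)/q$, and then verify, using the forbidden transitions $2j\to 2j+2$ and $2j+1\to 2j+1$ exactly as in the irrational case, that each odd block contains exactly $p/\gcd(p,k)$ points of $\cP$, so that $|X_{2j+1}|=p/q$ and the resulting address system is an admissible HM system whose itinerary closure is $\hZ$. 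Without that counting step the identification $\hZ=B_k(p/q,\vnu)$ for an allowable $\vnu$ is not established.
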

\begin{proof} We begin by proving portions of (a) and (b).
For part (a) we first show that $B_k(\alpha, \nu)$ is minimal
using a characterization
usually attributed to Birkhoff: If $f:X\raw X$ is a continuous
function of a compact metric space and $x\in X$, then
$\Cl(o(x,f))$ is a minimal set if and only if for all $\epsilon>0$
there exists an $N$ so that for all $n\in\N$ there is a 
$0<i\leq N$ with $d(f^{n+i}, x) < \epsilon$. Pick $x$ in
the good set $G$, since $(S^1, R_\alpha)$ is minimal,
$o(x, R_\alpha)$ has the given property. Since $\zeta_k$ restricted
to $G$ is a homeomorphism  and $\zeta_k R_\alpha = 
\sigma_k \zeta_k$, $o(\zeta_k(x), R_\alpha)$ has the desired
property and further, $o(\zeta_k(x), R_\alpha)$ is dense in
$\zeta_k(G)$ and thus in $B_k(\alpha, \vnu) = \Cl(\zeta_k(G))$.
Thus $B_k(\alpha, \vnu)$ is minimal under $\sigma_k$.

For part (b) note first that since $R_{p/q}$ is finite order
 and there are finitely
many address intervals, $B_k(p/q, \vnu)$ must consist of
finitely many periodic orbits. The other properties in (a) and
(b) will follow from (c) (proved using just these two partial
results on (a) and (b)) and Theorem \ref{rotlemma} using Remark~\ref{rk3}(a).

For part (c), we first show that $B_k(\omega, \vnu)$ is a recurrent
symbolic kfsm set.  By parts (a) and (b) we know that $B_k(\omega, \vnu)$ 
is recurrent and by Remark~\ref{rk2} that $B_k(\omega, \vnu)\subset\Omega_k$ 
and $\rho_k(B_k(\omega, \vnu) = \omega$.
 We  show that $B_k(\omega, \vnu)$ is symbolic kfsm set
by showing that its full lift $A_k(\omega, \vnu)$ to $\Omega_\infty$
is as required by Definition~\ref{Akdef}. As noted in Remark~\ref{rk1}, 
$\hT^k_\infty(A_{k}(\omega,\vnu)) = \A_{k}(\omega,\vnu)$ so we need
to show that $\hsigma_\infty$ is semi-monotone on  $A_k(\omega, \vnu)$.  

The first step is to show that $\zeta_\infty$ is weakly order preserving.
Assume $x_1', x_2'\in G$ with $x_1' < x_2'$. It could happen
(when $\omega$ is rational) that $\zeta_\infty(x_1') = \zeta_\infty(x_2')$,
but if there exists a least $n$ with 
$(\zeta_\infty(x_1'))_n \not= (\zeta_\infty(x_2'))_n$, then since $I_m < I_{m+1}$
for all $m$ and $R_\omega$ is order preserving, 
certainly $(\zeta_\infty(x_1'))_n < (\zeta_\infty(x_2'))_n$, and
so $\zeta_\infty$ is weakly order preserving.

We now show that $\hsigma_\infty$ is semi-monotone on  $A_k(\omega, \vnu)$.
 Let $G$ be the good set for $\zeta_\infty$ and assume 
$\us, \ut\in\zeta_\infty(G)$ with $\us < \ut$.
Then there exist $x_1', x_2'\in G$ with $\zeta_\infty(x_1') = \us$
and $\zeta_\infty(x_2') = \ut$ and of necessity, $x_1' < x_2'$ and so
$R_\omega(x_1') < R_\omega(x_2')$. Since $\zeta_\infty R_\omega
= \sigma_\infty\zeta_\infty$, we have
\begin{equation*}
\sigma_\infty(\us) = \sigma_\infty \zeta_\infty(x_1') = 
\zeta_\infty R_\omega(x_1) \leq \zeta_\infty R_\omega(x_2) 
= \sigma_\infty \zeta_\infty(x_2') = \sigma_\infty(\ut).
\end{equation*}
Thus $\sigma_\infty$ is weakly order preserving on $\zeta_\infty(G)$
and so on $A_k(\omega, \vnu)$. We have that $A_k(\omega, \vnu)$ satisfies
all the conditions of the lift in  Definition~\ref{symkfsm} and thus $B_k(\omega, \vnu)$
is symbolic kfsm. 

Now for the converse assume that $\hZ\subset\Omega_k$ is symbolic recurrent
kfsm with $\hrho(\hZ) = \omega$. Let $\hZ'\subset \Omega_\infty$
be the lift that satisfies Definition~\ref{symkfsm}. The proof splits into the two
cases when $\omega$ is rational and irrational. 

First assume $\omega = p/q$ with $\gcd(p,q) = 1$. We know from  
Lemma~\ref{rotlemma} and Remark~\ref{rk3}
that $\hZ$ consists of at most $k$ distinct periodic orbits
each with period $kq/d$ with $d=\gcd(p,k)$. We assume for simplicity 
that $\hZ$ is a single periodic orbit. The case of multiple
periodic orbits is similar but with more elaborate indexing.

For $i = 0, \dots, kq/d-1$ let 
$P_i = (2 i + 1)d/2q\subset S_k$ and $\cP= \{P_i\}$. Since $\hZ$ is 
a kfsm periodic orbit with $\rho_k$-rotation number
$p/q$  we may find an order preserving bijection 
$\phi:\hZ\raw \cP$ with $\phi\sigma_k = R_{p/q} \phi$ on 
$\hZ$. Thus $\phi\sigma_k\phi^{-1}$ acts on $\cP$ as
$P_i \mapsto P_{i+p/d}$ reducing indices mod $kq/d$.

For $j = 0, \dots, k-1$, let $X'_j = \phi(\hZ \cap [j])$ where recall that 
$[j]$ is the length one cylinder set in $\Sigma^+_{2k}$. Since $\phi$
is order preserving, each $X_j'$ consists of 
a collection of adjacent points from $\cP$. If $X_j' =
\{ P_{n(j)}, \dots, P_{m(j)}\} \not = \emptyset$, let 
$X_j =   [P_{n(j)} - d/(2q), P_{m(j)} + d/(2q)]$ and when $X_j'=\emptyset$
let   $X_j=\emptyset$. We now claim that $\{X_j\}$ is an address
system as used in the HM-construction where $\vnu$ is defined
by  $\nu_{j+1} = |X_{2j}|$
and that $|X_{2j+1}| = p/q$ for $j = 0, \dots, k-1$ yielding
$\hZ' = B_k(p/q, \vnu)$.

Letting $\zeta$ be the itinerary map for the address system
$\{X_j\}$ we have by construction that
 for $\us\in \hZ$ that $\zeta\phi(\us) = \us$.
In addition, for all $x\in [\phi(\us)-d/(2q), \phi(\us) + d/(2q)]$
we also have $\zeta(x) = \zeta\phi(\us) = \us$. Thus for any
point $x$ in the good set $G$, $\zeta(x) = \us$ for some $\us\in \hZ$.
This shows that $\hZ = \Cl(\zeta(G))$. The last step needed to
show that $\hZ = B_k(p/q, \vnu)$ is to check that the address system
is of the type used in the HM construction.

We need only check that 
$|X_{2j+1}| = p/q$ and for this it suffices to show that
$\# X_{2j+1} = p/d$. Assume first that $\# X_{2j+1} < p/d$. 
Recalling that $\phi \sigma_k \phi^{-1}$ acts on the
$X_i'$ like $i \mapsto i + p/d$, we see that there will be some
$P_m\in X_{2j}'$ and $P_{m+p/d}\in X_{2j+2}'$. Thus using $\phi^{-1}$
there is a $\us\in \hZ$ with $s_0 = 2j$ and $s_1 = 2j+2$, a
contradiction to the fact that $\hZ\subset \Omega_k$ and 
thus its allowable transitions are given by \eqref{transitions}. On the other
hand, if $\# X_{2j+1} > p/d$ we have some 
$P_m\in X_{2j+1}'$ and $P_{m+p}\in X_{2j+1}'$ again yielding
a contradiction to $\hZ\subset \Omega_k$.

The irrational case is basically a continuous version of
the rational one. By Remark~\ref{rk3}(b) we have a continuous, onto 
$\phi:\hZ\raw S_k$ which is weakly order preserving,
$\phi\sigma_\infty = R_\alpha \phi$, and $\#\phi^{-1}(x) = 1$
for all but a countable number of $R_\alpha$-orbits on which 
$\#\phi^{-1}(x) = 2$.

For $j= 0, \dots, k-1$, let $X_j = \phi([j])$. Thus $X_j$ is a closed interval
(perhaps empty) with $\cup X_j = \R$, $X_j \leq X_{j+1}$ and
adjacent intervals intersect only in their single common boundary point.
We use $\{ X_j\}$ as an address system with dynamics $R_\alpha$,
good set $G$, and itinerary map $\zeta$. By construction if
$\us\in \hZ$ with $\phi(\us)\in G$, then $\us = \zeta\phi(\us)$ and
so $\phi^{-1}(G) = \zeta\phi(\phi^{-1}(G)) = \zeta(G)$. Since $\hZ$
is a Cantor set and $\phi^{-1}(G)$ is $\hZ$ minus a countable set of
 $\sigma_k$-orbits we have that $\phi^{-1}(G)$ is dense $\hZ$.
Thus taking closures, $\hZ = \Cl(\zeta(G))$.

To finish we must show that $\{ X_j\}$ is the type of address system
allowable in the HM-construction. 
We just need $|X_{2j+1}| = \alpha$ for all $j$. The proof is similar
to the rational case. If $|X_{2j+1}| < \alpha$ then $\hZ$ has a transition
$2j \raw 2j+2$ and if $|X_{2j+1}| > \alpha$ then $\hZ$ has a transition
$2j+1 \raw 2j+1$. Either is a contradiction to $\hZ\subset\Omega_k$.
Thus letting $\nu_{j+1} = |X_{2j}|$ for $j = 0, \dots, k-1$, we have
$\hZ = \Cl(\zeta(G)) = B_k(\alpha, \vnu)$.

The last sentence in (c) follows from the construction of 
$\lambda_k(\omega, \vnu)$.
\end{proof}

\begin{remark}\label{unique1}
In Section~\ref{ss} below we shall see
that for the irrational case $\rho(\hZ) = \omega\not\in\Q$
that there is a unique $\vnu$ with $\hZ =  B_k(\omega, \vnu)$ and
for rational $p/q$ that there are, in general, many $\vnu$
with $Z = B_k(p/q, \vnu)$. But note
that if $\hZ$ is a single periodic orbit then the proof above produces
what we show is the unique $\vnu$ with $\hZ = B_k(p/q, \vnu)$. 
\end{remark}

\section{Parameterization of $\cS_k(g)$ and $\cN_k(g)$ by the HM
construction}

We know from Theorem~\ref{Bkstruc}(c) that the HM construction
 yields a correspondence
between sets $B_k(\omega, \vnu)$ and  symbolic kfsm set  in $\Omega_k$.
In addition, for a map $g\in \cG$  using
Theorem~\ref{symandreal} we get a bijection from
 kfsm sets in $\Lambda_k(g)$ to those in $\hLambda_k(g)\subset\Omega_k$.
Thus the HM construction provides a parameterization of $\cS_k(g)$.
 In this section we examine this parameterization as well as that
of $\cN_k(g)$ in detail.

\subsection{Resonance and holes}\label{holesec}
As commented on above, the collection of all kfsm sets is closed
in the compact metric space consisting of all compact $g$-invariant
sets with the Hausdorff topology. Thus the collection of all kfsm sets is 
complete. We have restricted attention here to
recurrent kfsm sets or $\cS_k(g)$. This is because the recurrent ones
are the most dynamically interesting  and carry the invariant
measures, but also as shown in 
Theorem~\ref{Bkstruc}, they are what is parameterized by the HM
construction. As a consequence our primary space of interest
$\cS_k(g)$ is not complete, but rather has holes at points
to be specified. What happens roughly is that as one takes 
the Hasudorff limit of recurrent kfsm sets the resulting kfsm set has homoclinic
points that are not recurrent and so the limit is not recurrent and
thus not any $B_k(\omega, \vnu)$. This is a phenomenon well known
in Aubry-Mather theory. Another  point of view on these
``holes'' is given in Section~\ref{neghom} using the family of interpolated
semi-monotone maps.

In the HM construction fix $0<k < \infty$. For a given allowable
  $(\omega, \vnu)$
recall the address intervals are $X_j = X_{j\vnu}$ for
 $j = 0, \dots, 2k-1$. Define
$\ell_j = \ell_{j\vnu}$ and $r_j = r_{j\vnu}$ by  
$[\ell_j, r_j] := X_j$. Note that $r_{j+1} = \ell_j$ with indices
reduced mod $2k$. 
\begin{definition}\label{resonantdef} 
The pair $(\omega, \vnu)$ is 
called \textit{resonant} if for some $n>1$ and $j,j'$, 
$R_\omega^n(\ell_j) = \ell_{j'}$. 
A pair that is not resonant is called \textit{nonresonant}.

\end{definition}
\begin{remark}\label{isres} Note that for a rational $\omega=p/q$ all
$(p/q, \vnu)$ are resonant as are all $(\omega, \vnu)$ when
some $\nu_i = 0$. Also, for all $(\omega, \vnu)$ and $j$, 
\begin{equation}\label{bad}
R_\omega(\ell_{2j-1}) = \ell_{2j}
\end{equation}
which is the reason $n$ is restricted to $n>1$ in the definition.
\end{remark}
The next lemma locates the ``holes'' in the space of all symbolic
kfsm sets and thus in any $\hcS(g)$.
\begin{lemma}\label{holes} $\ $
\begin{enumerate}[(a)]
\item Assume $(\alpha, \vnu)$ with $\alpha\not\in\Q$ is
resonant. There exists a sequence $\vnu^{(i)}\raw \vnu$
and a nonrecurrent kfsm $Z$ with
$B_k(\alpha, \vnu^{(i)})\raw Z$ in the Hausdorff topology
on all compact subsets of $\Sigma^+_{2k}$.
\item Assume $(p/q, \vnu)$ with $p/q\in\Q$.
 There exists a sequence $\omega^{(i)}\raw p/q$
and a nonrecurrent kfsm $Z$  with
$B_k(\omega^{(i)}, \vnu)\raw Z$ in the Hausdorff topology
on all compact subsets of $\Sigma^+_{2k}$.
\end{enumerate}
\end{lemma}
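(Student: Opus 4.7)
The plan for both parts is the same: perturb the parameters off the resonance hyperplane, pass to a Hausdorff-convergent subsequence of the resulting recurrent semi-Denjoy HM sets, and exhibit an explicit ``heteroclinic'' symbolic orbit in the limit whose forward shift orbit enters $B_k(\alpha,\vnu)$ (resp.\ $B_k(p/q,\vnu)$) after finitely many steps but which itself is not in it, hence is non-recurrent. I will describe (a); (b) is analogous.

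Since $(\alpha,\vnu)$ is resonant, I pick the smallest $n_0\ge 2$ and indices $j_0,j_0'$ with $R_\alpha^{n_0}(\ell_{j_0}) = \ell_{j_0'}$. Each resonance condition $R_\alpha^n(\ell_j) = \ell_{j'}$ is affine in $\vnu$, so it cuts out a hyperplane in the $(k-1)$-dimensional admissible $\vnu$-space, and the non-resonant parameters are a dense $G_\delta$. I choose $\vnu^{(i)}\to \vnu$ with every $(\alpha,\vnu^{(i)})$ non-resonant and all lying on one fixed side of the hyperplane, say $R_\alpha^{n_0}(\ell_{j_0}^{(i)}) > \ell_{j_0'}^{(i)}$. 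By compactness of closed subsets of $\Sigma_{2k}^+$ in the Hausdorff topology, after passing to a subsequence $B_k(\alpha,\vnu^{(i)})\to Z$, and $Z$ is kfsm because weak order-preservation of the lifted action is closed under Hausdorff limits (extract interpolating semi-monotone maps via Lemma~\ref{tfae} and use Arzel\`a--Ascoli). A standard continuity argument on individual symbols shows that for each $x$ in the good set of $(\alpha,\vnu)$, $\zeta_k^{(i)}(x)\to \zeta_k(x)$ coordinate by coordinate, so $B_k(\alpha,\vnu)\subseteq Z$.

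To produce the extra orbit, I track the left-limit itineraries $\us^{(i)} := \lim_{\epsilon\downarrow 0}\zeta_k^{(i)}(\ell_{j_0}^{(i)}-\epsilon) \in B_k(\alpha,\vnu^{(i)})$. Because $n_0$ is the first resonance time, for $1\le m < n_0$ the point $R_\alpha^m(\ell_{j_0})$ is interior to a unique address interval $X_{e_m}$, so $\us_m^{(i)} = e_m$ for all sufficiently large $i$. At $m=n_0$ the side choice forces $R_\alpha^{n_0}(\ell_{j_0}^{(i)}-\epsilon) > \ell_{j_0'}^{(i)}$ for small $\epsilon$, giving $\us_{n_0}^{(i)} = j_0'$; for $m>n_0$ the symbol tracks the right-limit itinerary of $\ell_{j_0'}^{(i)}$ at index $m-n_0$. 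Coordinate-wise passage to the limit yields
\begin{equation*}
\us \;=\; (j_0 - 1)\, e_1\, e_2\, \cdots\, e_{n_0 - 1}\, \zeta_k^{+}(\ell_{j_0'}) \;\in\; Z,
\end{equation*}
where $\zeta_k^{+}$ denotes the right-limit itinerary in the resonant set. In the resonant case the only boundary itineraries at $\ell_{j_0}$ are $\zeta_k^{\pm}(\ell_{j_0}) = (j_0\text{ or }j_0{-}1)\,e_1\cdots e_{n_0-1}\,\zeta_k^{\pm}(\ell_{j_0'})$, neither matching the prefix $(j_0-1)\,e_1\cdots e_{n_0-1}\,j_0'$ of $\us$; no interior $y$ in the good set can realize this prefix either, because the first $n_0$ symbols force $y$ just left of $\ell_{j_0}$ and hence $R_\alpha^{n_0}(y)$ just left of $\ell_{j_0'}$, addressed $j_0'-1$, contradicting symbol $n_0 = j_0'$. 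Since cylinder sets are clopen, $B_k(\alpha,\vnu)$ is disjoint from this cylinder, so $\us\notin B_k(\alpha,\vnu)$; meanwhile $\sigma^{n_0}\us = \zeta_k^{+}(\ell_{j_0'})\in B_k(\alpha,\vnu)$, forcing every $\sigma^m\us$ with $m\ge n_0$ to lie in the closed set $B_k(\alpha,\vnu)$, a uniform positive distance from $\us$. Hence $\us$ is not recurrent, and $Z$ is non-recurrent. For (b) the identical scheme runs with $\omega^{(i)}\to p/q$ taken irrational (with $\vnu^{(i)}$ adjusted to keep $\sum\nu_i^{(i)} = k - k\omega^{(i)}$): every $(p/q,\vnu)$ is automatically resonant because $R_{p/q}$ has finite order, a least $n_0\ge 2$ with $R_{p/q}^{n_0}(\ell_{j_0})=\ell_{j_0'}$ always exists, and generic nearby irrationals are non-resonant.

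The main obstacle is the symbolic bookkeeping in the middle step: one must correlate \emph{which} one-sided perturbation of the resonance hyperplane is chosen with \emph{which} one-sided limit itinerary is tracked, because only the mismatched pairing (here, a left-side prefix concatenated with a right-side tail) produces a limiting sequence whose $(n_0+1)$-prefix is categorically forbidden in $B_k(\alpha,\vnu)$. Verifying this forbidden-prefix property amounts to the careful address-interval analysis of $R_\alpha^m(\ell_{j_0}^{(i)}-\epsilon)$ at the critical time $m=n_0$.
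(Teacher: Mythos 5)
Your proposal is correct and follows essentially the same route as the paper: perturb $\vnu$ (resp.\ $\omega$) off the resonance, track one-sided itineraries of points approaching the resonant endpoint, and identify in the Hausdorff limit a sequence whose initial block is forbidden from recurring, hence a non-recurrent point of $Z$. Your closing argument for non-recurrence (forbidden prefix plus $\sigma^{n_0}\us$ landing in the closed set $B_k(\alpha,\vnu)$) is a slightly cleaner packaging of the paper's block-counting step, but the substance is identical.
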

\begin{proof} We suppress the dependence on $k$ to simplify notation.
For (a),  the resonance hypothesis implies that there are odd
$a$ and $b$ with $R_{\alpha}^n(X_{a\vnu}) = X_{b\vnu}$ for some $n>0$ where
we may assume $a<b$. Since $R_\alpha^n(r_{a\vnu}) = r_{b\vnu}$ 
by shrinking some $\nu_j$ for $a < j < b$ we obtain $\vnup$ and 
$x < r_{a\vnu}$ and arbitrarily close to it with $x\in G_{\alpha \vnup}$
and $R_{\alpha}^n(x) \in X_{b+1, \vnup}$. In this way we can obtain
sequences $\vnu^{(i)}\raw\vnu$ and
$x_i\nearrow r_{a\vnu}$ with $x_i\in G_{\alpha, \vnu^{(i)}}$
and $R_{\alpha}^n(x_i) \in X_{b+1, \vnu^{(i)}}$. Thus 
$$
\zeta_{\alpha,\vnu^{(i)}}(x_i) = .a\dots (b+1) 
\zeta_{\alpha,\vnu^{(i)}}(R_{\alpha}^{n+1}(x_i) 
$$ 

To simplify matters, assume that
 $R_{\alpha}(r_{b\vnu}) \in G_{\alpha, \vnu}$ ; more complicated resonances
are similar. Since $R_\alpha^{n+1}(x_i)\raw R_\alpha^{n+1}(r_{a\vnu}) =
R_{\alpha}(r_{b\vnu})$
using 
Remark~\ref{rk2}(b), 
$$
\zeta_{\alpha,\vnu^{(i)}}(x_i) \raw .a\dots (b+1) 
\zeta_{\alpha,\vnu}(R_{\alpha}(r_{b\vnu}) := \us.
$$
Passing to a subsequence if necessary, by the compactness of
 the collection of symbolic kfsm sets there is a kfsm $Z$ with 
$B_k(\alpha, \vnu^{(i)})\raw Z$ in the Hausdorff topology and
by its construction, $\us\in Z$. But $\us$ can't be recurrent since
by the resonance any length $(n+1)$ block in 
$\zeta_{\alpha,\vnu}(R_{\alpha} (r_{b\vnu})$ must start with
$a$ and end in $b$.

The argument for (b) is similar, but now the perturbation must be in
$\omega$ since if $\omega = p/q$ is fixed, $R_{\omega}^n(X_a) = X_a$
when $n = qk/\gcd(p,k)$ for all $\vnu$. Fix an $a$ and so 
$R_{p/q}^n(r_a) = r_a$. By increasing $\omega$ incrementally
we may find sequences $\omega^{(i)}\searrow p/q$ and $x_i\nearrow r_a$
with $x_i\in G_{\omega^{(i)}, \vnu}$ so that the initial length
$(n+1)$ block of $\zeta_{\omega^{(i)} \vnu}(x_i)$ is 
$a \dots a+1$. Thus if $\zeta_{p/q\vnu}(r_a + \epsilon) =
P^\infty$ for small $\epsilon$ then $$
\zeta_{\omega^{(i)},\vnu}(x_i) \raw .a\dots (a+1) P_2 P_3 \dots P_{n-1} P^\infty
:= \ut
$$ where $P = (a+1) P_2 P_3 \dots P_{n-1}$. As in the proof of (a)
passing to a subsequence if necessary, there is a kfsm $Z$ with 
$B_k(\omega^{(i)}, \vnu)\raw Z$ in the Hausdorff topology and
by its construction, $\ut\in Z$. But $\ut$ can't be recurrent since
any length $(n+1)$ block in 
$P^\infty$ must start and end with $a$.
\end{proof}

\subsection{continuity and injectivity}

In doing the HM construction 
the explicit dependence of  $A_k$ and $ B_k$  on
the pair $(\omega, \vnu)$ was included. However,  the elements of
the pair have the interdependence $\sum\nu_i = k(1-\omega)$ and so when we
treat $A_k$ and $B_k$ as   functions it is sometimes better to eliminate the
interdependence and treat them as functions of $\vnu$ alone, but the two variable
version will also continue to be useful.
Thus we sometimes overload the function $A_k$ and write 
$$
A_k(\vnu) = A_k(1-\sum\nu_i/k, \vnu)
$$
and similarly for $B_k$  and the measure valued map
$\lambda_k$. 
The collection of allowable parameters for each $k$ is then
$$
\cD_k = \{ \vnu\in\R^k\colon \nu_i\geq 0, \sum_{i=1}^k\nu_i \leq k \}.
$$
The set of HM parameters corresponding to symbolic kfsm sets for $g\in\cG$ 
is defined as
$$\HM_k(g) = \{\vnu\in\cD_k\colon B_k( \vnu)\subset \hcS_k(g)\}.$$

\begin{remark}\label{isparam} By Theorem~\ref{Bkstruc},
  $B_k:\HM(g)\raw \hcS_k(g)$ is surjective and so 
$\iota_k^{-1}B_k:\HM(g)\raw \cS_k(g)$ provides
a parameterization  of the positive slope kfsm recurrent 
sets of $g\in \cG$ and $(\iota_k^{-1})_* \lambda_k:\HM(g)\raw \cN_k(g)$ 
their invariant measures.
 \end{remark}

\noindent\textbf{Example: The model map} For the model map $f_m$, 
$\HM_k(f_m) = \cD_k$ since  $\hLambda_k(f) = \Omega_k$. 
\medskip

The first issue in understanding what the HM construction tells
us about $\cS_k(g)$ and $\cN_k(g)$ is to understand the nature
of the maps $B_k$ and $\lambda_k$. As indicated by Lemma~\ref{holes} 
 in the behaviour of the set-valued maps there is 
an essential distinction between  the 
resonance and nonresonance cases. 

\begin{theorem}\label{iotathm} Assume $g\in\cG$, for each $k>0$,
\begin{enumerate}[(a)] 
\item The map $(\iota_k)^{-1}\circ B_k:\HM_k(g)\raw\cB_k(g)$ is onto and
 further it is 
continuous at nonresonant values and discontinuous at resonant values.
\item The
map $(\iota_k)^{-1}_*\circ\lambda_k:\HM_k(g)\raw \cN_k(g)$  is 
a homeomorphism and
thus $\HM_k(g)$ is compact.
\end{enumerate}
\end{theorem}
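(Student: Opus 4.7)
By Lemma~\ref{iotainduced}(a), $(\iota_k)^{-1}$ and $(\iota_k)^{-1}_*$ induce homeomorphisms $\hcS_k(g)\to\cS_k(g)$ and $\hcN_k(g)\to\cN_k(g)$, so it suffices to analyze $B_k:\HM_k(g)\to\hcS_k(g)$ and $\lambda_k:\HM_k(g)\to\hcN_k(g)$ directly. Surjectivity in both cases is already provided by Theorem~\ref{Bkstruc}(c), so part~(a) reduces to Hausdorff continuity of the set-valued map $B_k$, and part~(b) reduces to showing that $\lambda_k$ is a continuous bijection whose domain is compact.

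For part~(a) I would split continuity at a nonresonant $\vnu_*$ into upper and lower semicontinuity. Lower semicontinuity is the easy half: any $\us\in B_k(\vnu_*)$ is a limit of $\zeta_{\vnu_*}(x)$ with $x\in G_{\vnu_*}$, and by Remark~\ref{rk2}(b) one has $\zeta_{\vnu_n}(x)\to\zeta_{\vnu_*}(x)$ whenever $x$ also lies in each $G_{\vnu_n}$; since each $G_{\vnu_n}^c$ is countable, the dense full-measure set $G_{\vnu_*}\cap\bigcap_n G_{\vnu_n}$ supplies enough witnesses. For upper semicontinuity, if $\us_n\in B_k(\vnu_n)$ with $\us_n\to\us$, pick $x_n\in G_{\vnu_n}$ whose itinerary matches $\us_n$ on an initial block of growing length, extract a convergent subsequence $x_n\to x_*\in S_k$, and use the nonresonance of $\vnu_*$ together with the allowable transitions~\eqref{transitions} to conclude that the appropriate one-sided itinerary of $x_*$ under $\vnu_*$ equals $\us$, so $\us\in B_k(\vnu_*)$. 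Discontinuity at resonant $\vnu_*$ is exactly what Lemma~\ref{holes} gives: it builds sequences $B_k(\vnu^{(i)})$ whose Hausdorff limit contains a nonrecurrent itinerary and hence cannot coincide with the recurrent set $B_k(\vnu_*)$.

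For part~(b) I would first establish weak continuity of $\lambda_k$ on all of $\cD_k$ by bounded convergence. For any continuous $f$ on $\Omega_k$,
\begin{equation*}
\int f\,d\lambda_k(\vnu_n) \;=\; \frac{1}{k}\int_{S_k} f\bigl(\zeta_{\vnu_n}(x)\bigr)\,dm(x),
\end{equation*}
and Remark~\ref{rk2}(b) gives pointwise convergence of the integrand on the full-measure set $G_{\vnu_*}\cap\bigcap_n G_{\vnu_n}$. Injectivity is immediate from~\eqref{addressHM}: $\lambda_k([2j])=\nu_{j+1}/k$ and $\lambda_k([2j+1])=\omega/k$, which recovers $\omega$ and $\vnu$. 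Since $\hcN_k(g)$ is compact and hence closed in the space of Borel probability measures on $\Omega_k$ (Lemma~\ref{iotainduced}(b)), the identification $\HM_k(g)=\lambda_k^{-1}(\hcN_k(g))$ exhibits $\HM_k(g)$ as a closed subset of the compact set $\cD_k$; thus $\HM_k(g)$ is compact, and a continuous bijection from a compact space to a Hausdorff space is automatically a homeomorphism. The hard step is the upper-semicontinuity piece of~(a): the limit point $x_*$ may lie on the $R_{\omega_*}$-orbit of some boundary point $\ell_{j,\vnu_*}$, and only the nonresonance hypothesis together with the tight control given by~\eqref{transitions} and the trivial coincidence~\eqref{bad} from Remark~\ref{isres} rules out the case analysis in which the one-sided itinerary limit would otherwise drift into a neighboring cylinder.
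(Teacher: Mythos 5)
Your overall architecture matches the paper's: reduce to $B_k$ and $\lambda_k$ via Lemma~\ref{iotainduced}, get surjectivity from Theorem~\ref{Bkstruc}(c), and get discontinuity at resonant parameters from the nonrecurrent limit sets of Lemma~\ref{holes}. Where you differ is in the two continuity arguments. For $\lambda_k$ you invoke bounded convergence against the pointwise convergence $\zeta_{\vnu_n}(x)\to\zeta_{\vnu_*}(x)$ on the co-countable set $G_{\vnu_*}\cap\bigcap_n G_{\vnu_n}$; this is correct and gives sequential weak continuity. The paper instead estimates $|\lambda_k(\omega,\vnu)([B])-\lambda_k(\omega_0,\vnu_0)([B])|$ directly: each $Y_B$ is an interval whose endpoints are among the finitely many preimages $\ell_j^{(i)}=R_\omega^{-i}(\ell_j)$, each of which is $1$-Lipschitz in $(\omega,\vnu)$, so $\lambda_k$ is in fact Lipschitz for the cylinder metric on measures --- a stronger, quantitative conclusion whose ingredients are reused in part (a). Your route to compactness (closedness of $\lambda_k^{-1}(\hcN_k(g))$ in $\cD_k$, then ``continuous bijection from a compact space to a Hausdorff space'') is equivalent to the paper's detour through the model map, for which $\HM_k(f_m)=\cD_k$ is already compact; both routes need the observation that $\spt(\lambda_k(\vnu))=B_k(\vnu)$ so that $\HM_k(g)$ really equals $\lambda_k^{-1}(\hcN_k(g))$.

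The one place you stop short of a proof is the upper-semicontinuity half of (a). You correctly isolate the danger --- the limit $x_*$ of your witnesses $x_n$ may lie on the $R_{\omega_*}$-orbit of a boundary point $\ell_{j}$, and the itineraries $\zeta_{\vnu_n}(x_n)$ may then converge to a sequence that is not a one-sided $\vnu_*$-itinerary of $x_*$ (this is exactly the mechanism exploited in Lemma~\ref{holes}) --- but you only assert that nonresonance ``rules out the case analysis.'' The paper closes this step with a finite combinatorial criterion: $B_k$ is Hausdorff-continuous at $(\omega_0,\vnu_0)$ if and only if, for every $N$, the set of blocks $B$ of length at most $N$ with $Y_B(\omega,\vnu)\neq\emptyset$ is locally constant in the parameter. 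At a nonresonant parameter the finitely many points $\ell_j^{(i)}(\omega_0,\vnu_0)$ with $0\le i\le N$ are pairwise distinct except for the forced coincidences \eqref{bad}; since each varies continuously with $(\omega,\vnu)$, their cyclic order on $S_k$ and the nondegeneracy of the gaps between them --- which are precisely the intervals $Y_B$ --- persist for all nearby parameters. Some version of this finite-separation argument is what your sketch still needs; without it the conclusion $\us\in B_k(\vnu_*)$ is not established.
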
 

\begin{proof}
Since we know from Lemma~\ref{iotainduced} that $\iota_k$ and $(\iota_k)_*$ are homemorphisms
we only consider $B_k$ and $\lambda_k$. While these are functions
of $\vnu$ alone, for the proof it is clearer to resort to the 
two variable versions with the proviso that $\omega = 1 - \sum\nu_i/k$.
Note that we have already shown in Theorem~\ref{Bkstruc}
  that $\lambda_k$ and $B_k$ are onto 
$\hcN_k(g)$ and $\hcS_k(g)$.  
We will often need to include the explicit dependence 
of various objects on the variables, for example, $\ell_j(\omega, \vnu)$,
and we often suppress the dependence on $k$. 

We prove (b) first.
We first show $\lambda_k$ is continuous.
 For each $j = 1, \dots, 2k-1$ and $i\in\N$, 
let $\ell_j^{(i)}(\omega, \vnu) =
R_{\omega}^{-i}(\ell_j(\omega, \vnu))$.
 The first observation from the HM construction is that
\begin{equation}\label{obs}
|\ell_j^{(i)}(\omega, \vnu) - \ell_j^{(i)}(\omega_0, \vnu_0)|
\leq \|(\omega, \vnu) - (\omega_0, \vnu_0)\|_1
\end{equation}
For a length $N$ block $B = b_0 \dots b_{N-1}$ in $\Omega_k$, let 
$$
Y_B(\omega, \vnu) = \bigcap_{i=0}^{N-1} R^{-i}_{\omega} (\Intt(X_{b_i}(\vnu)))
$$
and so $x\in G_{\omega, \vnu} \cap Y_B(\omega, \vnu)$ implies 
that $\zeta_{\omega\vnu} (x)$ begins with the block $B$.
 Also by the HM construction, $\lambda_k(\omega, \vnu) ([B]) =
m( Y_B(\omega, \vnu))$ with $m$ Lebesgue measure on the circle.

Recall that the weak topology on $\Sigma_{2k}^+$ is generated
by the metric
\begin{equation*}\label{weakmetric}
d(\mu, \mu') = \sum_{i=1}^\infty \frac{|\mu([B_i]) - \mu'([B_i])|}{2^i}
\end{equation*}
where $\{B_i\}$ is some enumeration of the blocks in $\Sigma_{2k}^+$.
Since each $Y_B(\omega, \vnu)$ is a (perhaps empty) interval
 with endpoints some $\ell_i^{(j)}(\omega, \vnu)$,
\eqref{obs} implies that 
\begin{equation*}
|m(Y_B(\omega, \vnu)) - m(Y_B(\omega_0, \vnu_0))|
\leq 2 \|(\omega, \vnu) - (\omega_0, \vnu_0)\|_1.
\end{equation*}
Thus summing over blocks 
\begin{equation*}
d(\lambda_k(\omega, \vnu), \lambda_k(\omega_0, \vnu_0)|
\leq 2 \|(\omega, \vnu) - (\omega_0, \vnu_0)\|_1.
\end{equation*}
so $\lambda_k$ is continuous.

Since by definition in the HM construction, 
$\lambda_k(\omega, \vnu)([2j]) = \nu_{j+1}$, $\lambda_k$
is injective. Recall now that for the model map, $\HM(f_m) =
\cD_k$ which is compact. So $\lambda_k:\HM(f_m)\raw \hcN_k(f_m)$ is
a homeomorphism with image the set of all measures on recurrent
symbolic kfsm sets in $\Omega_k$. Thus, since $\HM_k(g)\subset \cD_k$
we have that $\lambda_k:\HM_k(g)\raw \hcN_k(g)$ is also a homeomorphism.
The compactness of $\hcN_k(g)$ was proved in Lemma~\ref{iotainduced}.

The proof of (a) is based on the following claim: $B_k$ is continuous 
at $(\omega_0, \vnu_0)$
if and only if for all $N$  there exits $\delta >0$ so that 
 $\|(\omega, \vnu) - (\omega_0, \vnu_0)\| < \delta$ implies that
for all blocks $B$ of length $\leq N$ we have $Y_B(\omega_0, \vnu_0)$
nonempty exactly when $Y_B(\omega, \vnu)$ is nonempty.

To prove the claim, first note that continuity is equivalent to the
following:
given $\epsilon > 0$ there exists  $\delta>0$ so
that $\| (\omega, \nu) - (\omega_0, \nu_0)\| < \delta$ implies that
for each $\us\in \zeta_{\omega\vnu}(G_{\omega \nu})$ there is a 
 $\ut\in \zeta_{\omega_0\vnu_0}(G_{\omega_0 \vnu_0})$ with $d(\us, \ut) < \epsilon/2$ and for each $\ut\in \zeta_{\omega_0 \nu_0}(G_{\omega_0 \nu_0})$
 there is a 
 $\us\in\zeta_{\omega\vnu}(G_{\omega \vnu})$ with $d(\us, \ut) < \epsilon/2$.
This implies that $\HD(\zeta_{\omega\vnu}(G_{\omega \vnu}), 
\zeta_{\omega_0 \nu_0}(G_{\omega_0 \nu_0})(G_{\omega_0 \vnu_0}))
< \epsilon/2$ and thus $\HD(B_k(\omega,\vnu), B_k(\omega_0,\vnu_0)) < \epsilon$.
 Since  $d(\us, \ut)$ is
small exactly when $\us$ and $\ut$ agree in a long prefix block $B$ and
$Y_B(\omega, \vnu) = \zeta_{\omega\vnu}^{-1}([B])$, the claim follows.

We show that $B_k$ satisfies the condition in the claim when
$(\omega, \vnu)$ is nonresonant. Given $N$  
 for $j = 1, \dots, 2k-1$ and $i = 0, \dots, N$  consider again
$\ell_j^{(i)}(\omega, \vnu) =
R_{\omega}^{-i}(\ell_j(\omega, \vnu)$. By the HM construction
we have $\ell_{2m}^{(n+1)}(\omega, \vnu) = \ell_{2m-1}^{(n)}(\omega, \vnu)$
for all $n$, $m$ and $(\omega, \vnu)$. By nonresonance at $(\omega_0, \vnu_0)$,
all the other $\ell_j^{(i)}(\omega_0, \vnu_0)$ are disjoint. Since by
\eqref{obs}  each
$\ell_j^{(i)}(\omega, \vnu)$ depends continuously on $(\omega, \vnu)$
and the endpoints of each $Y_B(\omega, \vnu)$ is some 
$\ell_j^{(i)}(\omega, \vnu)$, we may find a $\delta$ so that
$\| (\omega, \nu) - (\omega_0, \nu_0)\| < \delta$ implies that 
the $\ell_j^{(i)}(\omega, \vnu)$ are ordered around $S_k$ in
the same way and with the same gaps between them as the
$\ell_j^{(i)}(\omega_0, \vnu_0)$. This implies that for each block
$B$ of length $B\leq N$,  $Y_B(\omega_0, \vnu_0)$ is
nonempty exactly when $Y_B(\omega, \vnu)$ is nonempty and so 
$B_k$ is continuous.

For the discontinuity, since the sets $Z$ in Lemma~\ref{holes}(ab)
are not recurrent, they are not equal to $B_k(\omega_0, \vnu_0)$.

\end{proof}

\begin{remark} $\ $ 
\begin{enumerate}[(a)]
\item The parameter space $\cD_k$ is $(k-1)$-dimensional. Assuming
$\omega\not\in\Q$, then for a fixed $n>1$ and $j,j'$, the collection of
all $\vnu\in\cD_k$ which yield $R_\omega^n(\ell_j) = \ell_{j'}$
is a $(k-2)$-dimensional affine subspace. Thus the set of
resonance parameters is a countable  dense collection of codimension 
one affine subspaces and so the resonance case is full measure and
dense $G_\delta$

\item One can show that $B_k$ is lower semicontinuous 
\cite{BdDenjoy}, in particular  if $(\omega^{(i)}, \vnu^{(i)})
\raw (\omega, \vnu)$ and some subsequence of
$B_k(\omega^{(i)}, \vnu^{(i)})$ converges to $Z$ in
the Hausdorff topology, then $  B_k(\omega, \vnu) \subset Z$.
The Semi-Continuity Lemma (see page 114 of \cite{Choquet})
yields that a lower semi-continuous
set-valued function is  continuous on a dense
$G_\delta$ set. In the case of $(\iota_k)^{-1}\circ B_k$ the 
last theorem exactly identifies
this continuity set as the nonresonant $(\omega, \vnu)$.
\end{enumerate}
\end{remark}

\subsection{Slices and skewness}\label{ss}
Since $\rho$ and $\hrho$ are defined and continuous on
the various spaces $\cS_k(g)$ etc., we may define the
closed slices with a given rotation number. 
\begin{definition}\label{slicedef} For $g\in\cG$ let 
 $\cS_{k\omega}(g) = \{Z\in\cS_k(g)\colon \rho(Z) = \omega\}$ and 
$\hcS_{k\omega}(g) = \{Z\in\hcS_k(g)\colon \hrho(Z) = \omega\}$
and the restriction of $\iota_k$ to $\cS_{k\omega}(g)$ is denoted
 $\iota_{k\omega}$ 
The slices of invariant measures 
$\cN_{k\omega}(g)$ and $\hcN_{k\omega}(g)$ are defined similarly.
The $\omega$-slice of HM parameters is 
$\HM_{k\omega}(g) = B_k^{-1}(\cS_{k\omega}(g)) = 
\lambda_k^{-1}(\cN_{k\omega}(g))$
\end{definition}

\begin{definition}\label{puredef} For $p/q\in\Q$ an 
allowable parameter $\vnu$ is
called pure if $B_k(p/q, \vnu)$ consists of a single periodic
orbit. The collection of $p/q$ pure parameters is denoted 
$\Pure_{k, p/q}\subset\cD_{k,p/q}$ 
and it will  be shown in Lemma~\ref{latticelem} to be an affine lattice.
For a $g\in\cG$ its pure parameters are $ \Pure_{k, p/q}(g) 
= \HM_{k, p/q}(g)\cap \Pure_{k, p/q}$.
\end{definition}

\begin{remark}\label{unique2}
For a given symbolic kfsm $p/q$-periodic orbit $P$ by 
Theorem~\ref{Bkstruc}(c)
 there is some $\vnu$ with $B_k(p/q, \vnu) = P$. 
Since a periodic orbit is uniquely ergodic and $\lambda_k$ is
injective this $\vnu$ is unique. Thus there is a bijection
between symbolic kfsm $p/q$-periodic orbits and $\Pure_{k, p/q}$.
\end{remark}

\begin{lemma}\label{iota2lemma} Assume $g\in\cG$
\begin{enumerate}[(a)]
\item For all $\omega$, 
$(\iota_{k\omega}^{-1})_*\circ\lambda_{k\omega}:\HM_{k\omega}(g)\raw \cN_{k\omega}(g)$ is a homeomorphism.
\item When $\alpha\not\in\Q$,
 $\iota_{k\alpha}^{-1}\circ B_{k\alpha}:\HM_{k\alpha}(g)\raw \cS_{k\alpha}(g)$ 
is injective as well as  continuous
at nonresonant $(\alpha, \vnu)$ and discontinuous at resonant $(\alpha, \vnu)$.
\item When $p/q\in\Q$, 
 $\iota_{k p/q}^{-1}B_{k p/q}:\HM_{k p/q}(g)\raw \cS_{k p/q}(g)$ is injective on $\Pure_{k, p/q}$. 
\end{enumerate}
\end{lemma}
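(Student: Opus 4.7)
The plan is to deduce all three parts by restricting the already-established global maps of Theorem~\ref{iotathm} to slices of constant rotation number, combining this with the unique ergodicity of the relevant invariant sets. The single common preliminary is the identification
\begin{equation*}
\HM_{k\omega}(g) \;=\; B_k^{-1}(\hcS_{k\omega}(g)) \;=\; \lambda_k^{-1}(\hcN_{k\omega}(g)),
\end{equation*}
which follows from $\hrho_k(B_k(\omega,\vnu))=\omega=\hrho_k(\lambda_k(\omega,\vnu))$, as noted in Remark~\ref{rk2}(a) and Remark~\ref{fact3}. Thus restricting to $\HM_{k\omega}(g)$ is the same as pre-image-restricting to the corresponding slice in the codomain, and the topological properties of the restriction are automatic from those of the global map.

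For (a), Theorem~\ref{iotathm}(b) provides a homeomorphism $(\iota_k)^{-1}_*\circ\lambda_k:\HM_k(g)\to\cN_k(g)$. Its restriction to the closed slice $\HM_{k\omega}(g)$ is a continuous injection onto $\cN_{k\omega}(g)$; since the restriction of a homeomorphism to a subspace, with image equal to the corresponding subspace, is itself a homeomorphism, part (a) is immediate.

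For (b), continuity at nonresonant $(\alpha,\vnu)$ and discontinuity at resonant $(\alpha,\vnu)$ transfer directly from Theorem~\ref{iotathm}(a) restricted to $\HM_{k\alpha}(g)$. The new content is injectivity. By Theorem~\ref{Bkstruc}(a), when $\alpha\not\in\Q$ the set $B_k(\alpha,\vnu)$ is a semi-Denjoy minimal set, which is uniquely ergodic (Remark~\ref{rk3}(d)), and its unique invariant measure is $\lambda_k(\alpha,\vnu)$. Consequently, if $B_k(\alpha,\vnu)=B_k(\alpha,\vnu')$ then $\lambda_k(\alpha,\vnu)=\lambda_k(\alpha,\vnu')$, and the injectivity of $\lambda_k$ from part (a) forces $\vnu=\vnu'$. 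The claim for $\iota_{k\alpha}^{-1}\circ B_{k\alpha}$ follows since $\iota_k$ is a conjugacy (Theorem~\ref{conjugacy}).

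For (c), when $\vnu\in\Pure_{k,p/q}$, the set $B_k(p/q,\vnu)$ is by definition a single periodic orbit, hence uniquely ergodic, with unique invariant measure $\lambda_k(p/q,\vnu)$. The same three-line argument as in (b) applies: two pure parameters giving the same periodic orbit would yield the same invariant measure, contradicting the injectivity in (a). This recovers the bijection described in Remark~\ref{unique2}. The only thing one must be careful about is that injectivity on $\Pure_{k,p/q}$ does \emph{not} extend to all of $\HM_{k,p/q}(g)$, because nonpure $\vnu$ can produce the same cluster from different convex combinations of its periodic-orbit measures; the pure restriction is precisely what avoids this ambiguity. No serious obstacle appears: the essential dynamical inputs—unique ergodicity of semi-Denjoy minimal sets and of individual periodic orbits, and the homeomorphism in part (a)—have already been established.
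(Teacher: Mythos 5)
Your proposal follows essentially the same route as the paper: part (a) by restricting the homeomorphism of Theorem~\ref{iotathm}(b) to a slice, and injectivity in (b) and (c) by passing through unique ergodicity of semi-Denjoy minimal sets and of single periodic orbits to reduce to the injectivity of $\lambda_k$. This matches the paper's argument (and, for (c), the argument recorded in Remark~\ref{unique2}) almost verbatim.

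One step is stated too loosely. For the \emph{discontinuity} of $B_{k\alpha}$ at resonant parameters you say this ``transfers directly from Theorem~\ref{iotathm}(a) restricted to $\HM_{k\alpha}(g)$.'' Continuity does restrict to subspaces, but discontinuity does not: a map discontinuous at a point can have a continuous restriction to a subspace through that point, so discontinuity of the global map $B_k$ on $\HM_k(g)$ does not by itself give discontinuity of $B_{k\alpha}$ on the slice. What saves the argument is that the witnessing sequence in Lemma~\ref{holes}(a) is of the form $(\alpha,\vnu^{(i)})$ with $\alpha$ fixed and $\vnu^{(i)}\raw\vnu$, i.e., it lies entirely in the irrational slice $\HM_{k\alpha}(g)$, and its Hausdorff limit contains a nonrecurrent point and hence is not $B_k(\alpha,\vnu)$. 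The paper cites Lemma~\ref{holes}(a) at exactly this point for exactly this reason; you should do the same rather than appeal to the global discontinuity statement.
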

\begin{proof}
Since $\iota_k$ restricts to a homeomorphism on slices we only consider
$B_{k\omega}$ and $\lambda_{k\omega}$. Part (a) follows immediately from 
Theorem~\ref{iotathm}.

For (b), when $\alpha\not\in\Q$ the assignment of a semi-Denjoy kfsm set with
rotation number $\alpha$ to its unique invariant measure
yields a bijection $\cS_{k\alpha}(g)\raw \cN_{k\alpha}(g)$ and
$\hcS_{k\alpha}(g)\raw \hcN_{k\alpha}(g)$. Since by (a), $\lambda_{k\alpha}$ 
is injective, we have that $B_{k\alpha}$ is also. Continuity of
$B_{k\alpha}$ at nonresonant values on
 irrational slices follows directly from (a). 
Discontinuity at resonant values on irrational slices follows
from Lemma~\ref{holes}(a).

\end{proof}

\begin{remark} Since $\cS_{k p/q}(g)$ is a finite, set the continuity
of $\iota_{k p/q}^{-1}B_{k p/q}:\HM_{k p/q}(g)\raw \cS_{k p/q}(g)$ is
not particularly interesting, but we will remark on it in Section~\ref{subres}.
\end{remark}

The \textit{skewness} $\gamma(\mu)$ of a $\tg_k$-invariant measure in $S_k$
is the amount of measure in each fundamental domain. When its $j^{th}$ component
is large, roughly its $\tg_k$-orbits are moving slowly through $[j-1,j)$.
When we project to the base $S^1$ in the next section the skewness
will thus indicate how quickly orbits are moving of the  $j^{th}$ loop
of the kfsm set.
\begin{definition}\label{skewdef} Assume $g\in\cG$
\begin{enumerate}[(a)]
\item For $\eta\in\cN_k(g)$, $\gamma(\eta) = (\eta([0,1)), \eta([1,2)),
\dots, \eta([k-1,k))$
\item For $\heta\in\hcN_k(g)$, $\hgamma(\eta) = (\heta([0]\cup [1]), 
\heta([2]\cup [3])
\dots, \heta([2k-2]\cup [2k-1])$
\end{enumerate}
\end{definition}
Note that the skewness takes values in the unit simplex 
$\sum a_i = 1$, $a_i \geq 0$ and contains no information 
about the rotation number.
\begin{lemma}\label{skewlem} Assume $g\in\cG$
\begin{enumerate}[(a)]
\item $\hgamma \circ (\iota_k)_* = \gamma$
\item $\gamma(\lambda_k(\omega, \vnu)) = 
(\omega + \nu_1, \omega + \nu_2, \dots, \omega + \nu_k)/k$
\item For $\eta\in\cN_{k\omega}$, 
$\gamma_1(\eta) = k \gamma(\eta) - \omega \One$ is
inverse to $(\iota_k)_*^{-1} \circ \lambda_k$ and so it is a
homeomorphism. 
\item $\gamma$ is a homeomorphism from $\cN_{k\omega}(g)$
onto its image as is $\hgamma$ from $\hcN_{k\omega}(g)$ 
onto its image
\end{enumerate}
\end{lemma}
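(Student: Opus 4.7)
My plan is to handle the four parts in order, with (a) and (b) supplying the essential computations and (c), (d) being formal consequences together with Theorem~\ref{iotathm}.

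For (a), the key observation is the preimage computation $\iota_k^{-1}([2j]\cup[2j+1]) = \Lambda_k(g)\cap(I_{2j}\cup I_{2j+1})$. Since $I_{2j}=[j,\zmax+j]$ and $I_{2j+1}=[\zmin+j,\xmax+j]$ both lie inside $[j,j+1)$, and $\Lambda_k(g)\subset\bigcup_i I_i$ by definition, the intersection $\Lambda_k(g)\cap[j,j+1)$ coincides with $\Lambda_k(g)\cap(I_{2j}\cup I_{2j+1})$. Because any $\eta\in\cN_k(g)$ is supported in $\Lambda_k(g)$, this gives $\eta([j,j+1)) = (\iota_k)_*\eta([2j]\cup[2j+1])$, which is precisely (a).

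For (b), the HM construction gives $|X_{2j}|=\nu_{j+1}$ and $|X_{2j+1}|=\omega$, and by definition $\lambda_k(\omega,\vnu)=(\zeta_k)_*(m)/k$. The preimage $\zeta_k^{-1}([i])$ is $X_i\cap G_k$, which differs from $X_i$ by a Lebesgue-null set, so $\lambda_k(\omega,\vnu)([2j]\cup[2j+1]) = (\nu_{j+1}+\omega)/k$; read via (a) (or directly via $\hgamma$), this is the $(j{+}1)^{\text{st}}$ component of the claimed skewness. Part (c) then follows algebraically: combining (a) and (b) gives $\gamma_1((\iota_k)_*^{-1}\lambda_k(\omega,\vnu)) = k\hgamma(\lambda_k(\omega,\vnu))-\omega\One = \vnu$, so $\gamma_1$ is a left inverse, and by the bijectivity asserted in Theorem~\ref{iotathm}(b) it is the two-sided inverse on the $\omega$-slice. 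Continuity of $\gamma_1$ follows because it factors through $(\iota_k)_*$ into $\hcN_{k\omega}(g)$ and then evaluates on the clopen cylinders $[2j]\cup[2j+1]$; evaluation on clopen sets is weak-$*$ continuous, so $\gamma_1$ is a continuous bijection between compact Hausdorff spaces and hence a homeomorphism.

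For (d), the identity $\gamma = (\gamma_1 + \omega\One)/k$ on the $\omega$-slice displays $\gamma$ as an invertible affine rescaling of $\gamma_1$, so (c) immediately yields that $\gamma$ is a homeomorphism from $\cN_{k\omega}(g)$ onto its image; the corresponding statement for $\hgamma$ follows by conjugating with (a). The main subtlety I anticipate is in (a), specifically verifying that the half-open fundamental domain $[j,j+1)$ and the closed union $I_{2j}\cup I_{2j+1}$ meet $\Lambda_k(g)$ in the same set; this rests on the observation that the gap $(\xmax+j,j+1)$ lies outside $\bigcup_i I_i$, and it is also what avoids boundary-atom obstructions to the weak-$*$ continuity argument for $\gamma$.
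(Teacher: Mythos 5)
Your proposal is correct and is essentially the direct verification the paper evidently intends (the lemma is stated without proof there): part (a) via the identification $\iota_k^{-1}([2j]\cup[2j+1])=\Lambda_k(g)\cap[j,j+1)$, part (b) from $\lambda_k=(\zeta_k)_*(m)/k$ together with $|X_{2j}|=\nu_{j+1}$, $|X_{2j+1}|=\omega$, and (c), (d) as formal consequences using Theorem~\ref{iotathm}(b). All the supporting facts you invoke (support of $\eta$ in $\Lambda_k(g)$, fullness of the good set, surjectivity of $\lambda_k$ onto the slice) are available in the paper, so no gaps.
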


\begin{remark}\label{skewrk} The last lemma formalizes the description
in the Introduction on the \newline 
parametrization of the weak disks of
semi-Denjoy minimal sets by their speed in each ``loop'' around
the circle. For rational pure parameters the skewness counts the
number of elements in each fundamental domain and this thus 
yields a discrete parametrization of the kfsm $p/q$-periodic
orbits.
\end{remark}
\section{kfsm sets in $S^1$ and $\Omega_1$}
\subsection{In $S^1$}
We now return to our central concern, $g$-invariant sets in $S^1$ that
have a lift to $S_k$ that is semi-monotone. Once again the definition
makes sense for any degree one circle map but we restrict to the class
$\cG$.
\begin{definition}\label{kfsminbase} Given $g\in \cG$, 
a compact $g$-invariant set $Z\subset S^1$ is kfsm 
if it has a $\tg$-invariant lift $Z'\subset\R$ which is kfsm, or equivalently,
$Z$ has a $\tg_k$-invariant lift $Z^*\subset S_k$ which is kfsm. 
Let $\cC_k(g)$ be all compact, invariant, recurrent
kfsm sets in $\Lambda_1(g)$ with the Hausdorff topology
 and $\cO_k(g)$ be all $g$-invariant, Borel
probability measures supported on $Z\in \cC_k(g)$ with the weak topology
\end{definition}
Thus when $Z$ is kfsm, it has a lift to $S_k$ which is 
semi-monotone under the action of $\tg_k$ on its lift.
 
To make contact with the usual definitions in Aubry-Mather
theory, assume that  $x\in S^1$ is such that $o(x,f)$ is 
k-fold semi-monotone. This happens exactly when there
is a point $x'\in\R$ with $\pi_\infty(x') = x$ and for all positive
integers $\ell, m,n$, 
\begin{equation*}
\tg^\ell(x') < T^{k m} \tg^n(x')\ \ \text{implies} \ \ 
\tg^{\ell+1}(x') \leq T^m \tg^{n+1}(x')
\end{equation*}
In Aubry-Mather theory one would write $x_j = \tg^j(x')$.

\begin{remark}\label{lots} $\ $
\begin{enumerate}[(a)]
\item $\pi_k:S_k\raw S^1$ induces continuous onto maps
$\cB_k(g)\raw \cC_k(g)$ and $\cN_k(g)\raw \cO_k(g)$.
\item $Z^*\subset S_k$ is kfsm if and only if $\pi_k(Z^*)\subset S^1$ is.
\item If $Z\subset S^1$ is is a $k$-fold semi-monotone then it is also
$\ell k$-fold semi-monotone for any $\ell > 0$.
\item If $P$ is a periodic orbit of $g$ of type $(p,q)$ (which
are perhaps not relatively prime) then $P$ has
a lift $P'$ to $\R$ with $T^p(P') = P'$ and is monotone
since $g\in\cG$ implies $\tg(x') \geq x'$ and so 
$P$ is automatically $p$-fsm.  
\item Using Lemma~\ref{symandreal}  a  recurrent kfsm set in  $S^1$ is either a 
collection of periodic orbits all with the same rotation
number (a cluster) or else a semi-Denjoy minimal set.    
A  minimal kfsm set  $S^1$ is either  a single periodic orbit 
or else a semi-Denjoy minimal set.
\item A collection of periodic orbits all with the same rotation
number that individually are kfsm when considered as a set is not
of necessity a kfsm (i.e., a cluster) 
\end{enumerate}
\end{remark}

\subsection{Symbolic kfsm sets in $\Omega_1$}
We now consider symbolic kfsm sets in the symbolic base $\Omega_1 =
\Sigma_2^+$.
\begin{definition}\label{symkfsminbase}
A $\sigma_1$-invariant set 
$\hZ\subset \Omega_1 = \Sigma_{2}$ is kfsm
if there is a $\sigma_\infty$-invariant lift $\hZ'$ 
(i.e., $\hp_\infty(\hZ') = \hZ$) which is kfsm
or equivalently,
$\hZ$ has a $\sigma_k$-invariant lift $\hZ^*\subset \Omega_k$ which is
kfsm.  Given $g\in\cG$ let $\hcC_k(g)$ be all  recurrent
kfsm sets in $\hLambda_1(g)$ with the Hausdorff topology
 and $\hcO_k(g)$ be all $g$-invariant, Borel
probability measures supported on $\hZ\in \hcC_k(g)$ with the weak topology
\end{definition}

Using Theorem~\ref{symandreal} 
we connect kfsm sets in $\Lambda_1(g)$ to their symbolic
analogs in $\hLambda_1(g)$ and get
\begin{corollary}\label{basecor}
 A $g$ invariant set $Z\subset \Lambda_1(g)$ is kfsm
if an only if $\iota_1(Z)\subset \hLambda_1(g)$ is. Further,
$\iota_1$ induces homeomorphisms $\cC_k(g)\raw \hcC_k(g)$ and 
$\cO_k(g)\raw \hcO_k(g)$.
\end{corollary}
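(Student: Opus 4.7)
The plan is to reduce everything to the already-established Theorem~\ref{symandreal} by lifting to the $k$-fold cover. The essential ingredients beyond that theorem are three: the intertwining identity $\hpi_k \iota_k = \iota_1 \pi_k$ from Lemma~\ref{fact1}(b); the fact that $\iota_1:\Lambda_1(g) \raw \hLambda_1(g)$ is a topological conjugacy from Theorem~\ref{conjugacy}; and the definitions of kfsm in the base (Definitions~\ref{kfsminbase} and~\ref{symkfsminbase}), each of which asks only for an appropriate lift to a $k$-fold cover.

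For the ``if and only if'' I would argue both directions along parallel lines. Starting with $Z\subset\Lambda_1(g)$ kfsm, Definition~\ref{kfsminbase} furnishes a $\tg_k$-invariant kfsm lift $Z^*\subset S_k$, and $Z\subset\Lambda_1(g)$ together with $\Lambda_k(g) = \pi_k^{-1}(\Lambda_1(g))$ automatically places $Z^*$ in $\Lambda_k(g)$. Theorem~\ref{symandreal} then says $\hZ^*:=\iota_k(Z^*)\subset\hLambda_k(g)$ is symbolic kfsm, and $\hpi_k\iota_k = \iota_1\pi_k$ gives $\hpi_k(\hZ^*) = \iota_1(\pi_k(Z^*)) = \iota_1(Z)$. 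That is exactly the data needed to call $\iota_1(Z)$ kfsm per Definition~\ref{symkfsminbase}. The converse is the mirror image: start with a symbolic kfsm lift $\hZ^*\subset\hLambda_k(g)$ of $\iota_1(Z)$, set $Z^* := \iota_k^{-1}(\hZ^*)\subset\Lambda_k(g)$, invoke Theorem~\ref{symandreal} to see that $Z^*$ is kfsm, and use the same intertwining identity together with the injectivity of $\iota_1$ on $\Lambda_1(g)$ to conclude $\pi_k(Z^*) = Z$, so $Z$ is kfsm.

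For the homeomorphism statements, Theorem~\ref{conjugacy} already provides that $\iota_1:\Lambda_1(g) \raw \hLambda_1(g)$ is a topological conjugacy, and since conjugacies of compact metric systems preserve compactness, recurrence, Hausdorff convergence on the hyperspace, invariance of Borel measures, and weak convergence of measures, the maps $\iota_1$ induces on the full hyperspaces of compact invariant subsets and on the full spaces of invariant Borel probability measures are already homeomorphisms. The equivalence established above shows that these restrict to bijections $\cC_k(g)\raw\hcC_k(g)$ and $\cO_k(g)\raw\hcO_k(g)$, and as restrictions of homeomorphisms they are themselves homeomorphisms. I do not anticipate a serious obstacle; the only bookkeeping point is to juggle two different notions of lift---from $S^1$ to $S_k$ on the physical side, from $\Omega_1$ to $\Omega_k$ on the symbolic side---and to apply $\hpi_k\iota_k = \iota_1\pi_k$ at precisely the right moments so that these two lift relations correspond.
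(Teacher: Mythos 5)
Your proposal is correct and follows the same route the paper intends: the corollary is stated there as an immediate consequence of Theorem~\ref{symandreal}, and your argument simply makes explicit the lifting to $S_k$/$\Omega_k$, the intertwining identity $\hpi_k\iota_k=\iota_1\pi_k$, and the fact that the conjugacy $\iota_1$ induces homeomorphisms on hyperspaces and measure spaces (exactly as Lemma~\ref{iotainduced} does one cover up). The only detail worth writing down is the one-line check that a symbolic kfsm lift $\hZ^*\subset\Omega_k$ of $\iota_1(Z)$ automatically lies in $\hLambda_k(g)=\Omega_k\cap\hpi_k^{-1}(\lrk)$, so that Theorem~\ref{symandreal} applies to $\iota_k^{-1}(\hZ^*)$.
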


\begin{remark} All the comments in Remark~\ref{lots} hold for
symbolic kfsm sets \textit{mutatis mutandis}.
\end{remark}

\subsection{The HM construction and its symmetries}

We bring the HM construction back into play and take the projections
from $\Omega_k$ to $\Omega_1$.
\begin{definition}\label{Ckdef}
 Let $C_{k}(\omega, \vnu) = \hpi_k(B_{k}(\omega, \vnu))$ and
$\mu_{k}(\omega, \vnu) = (\hpi_k)_*(\lambda_{k}(\omega, \vnu))$
\end{definition}

We know from Theorem~\ref{iotathm}
 that the HM construction provides a parameterization
of $\cS_k(g)$ and $\cN_k(g)$, the goal now is to get a parameterization
of the kfsm sets and their invariant measures in $S^1$, i.e., of $\hcC_k(g)$
and $\hcO_k(g)$. For this we need to understand the symmetries
inherent in the HM construction.

Recall the left shift on the parameter $\nu$ is
$\tau(\nu_1, \dots, \nu_k) = (\nu_2, \dots, \nu_k, \nu_1)$.
There are two types of symmetries to be considered. The first is
when different $\vnu$ give rise to the same $C_{k}(\omega, \vnu)$.
For minimal  $C_{k}(\omega, \vnu)$ this happens if and only if 
the $\vnu$'s are shifts of each other as is stated
in parts (a) and (d) in the next theorem. The second sort of symmetry
happens when some $C_k(\omega, \vnu)$ is also a $C_j(\omega, \vnup)$
for some $j<k$, which is to say the map $\hpi_k:B_k(\omega, \vnu)
\raw C_k(\omega, \vnu)$ is not one-to-one. In the minimal case
this happens if and only if $\tau^j(\vnu) = \vnu$ as is stated
in parts (b) and (c) below.
\begin{lemma}\label{sym}
\begin{enumerate}[(a)] Fix $k>0$ and assume $\vnu$ is allowable for 
$\omega$.
\item  For all $j$, $B_k(\omega,\tau^j(\vnu)) = \hT_k^j( B_k( \omega,\nu))$
and so $C_k(\omega,\tau^j(\vnu)) = C_k( \omega,\nu)$ 
\item If $\tau^j(\vnu) = \vnu$ for some $0 < j < k$ then 
\begin{equation}\label{dumb}
B_k(\omega,\vnu) = \hT_k^j( B_k( \omega,\vnu))
\end{equation}
and $C_k(\omega,\vnu) = C_j(\omega,\vnup)$ where
$\vnup = (\nu_1, \dots, \nu_j)$.
\item If $B_k(\omega,\vnu)$ is minimal and
\eqref{dumb} holds then $\vnu = \tau^j(\vnu)$. 
If $\vnu \not= \tau^j(\vnu)$ for all $0 < j < k$,
then $\hpi_k:B_k(\omega,\vnu) \raw C_k(\omega,\vnu)$
is a homeomorphism.
\item If $B_k(\omega, \vnu)$ and $B_k(\omega, \vnup)$ 
are minimal and $C_k(\omega, \vnu)=C_k(\omega, \vnup)$,
then for some $j$, $\vnup = \tau^j(\vnu)$.
\end{enumerate}
\end{lemma}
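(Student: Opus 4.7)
My plan is to establish part (a) by a direct computation on the HM address system and then to derive (b), (c), (d) from (a) together with the injectivity of $\lambda_k$ from Theorem~\ref{iotathm}(b), minimality/unique ergodicity, and the covering-space lemma (Lemma~\ref{fact1}(c)). For (a) it suffices to treat $j=1$ and iterate. Writing $\vnu' = \tau\vnu$, the formulas~\eqref{addressHM} show that $X_{2j}(\vnu')$ and $X_{2j+2}(\vnu)$ both have width $\nu_{j+2}$, while all odd-indexed intervals have width $\omega$; matching starting points then gives $X_m(\vnu') = X_{m+2}(\vnu) - c$ in $S_k$ for the constant $c = \nu_1 + \omega$. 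Since the translation $x \mapsto x+c$ commutes with $R_\omega$ and takes $G_{\omega,\vnu'}$ bijectively onto $G_{\omega,\vnu}$, it intertwines the itinerary maps by a shift of symbols by $+2$, and closing up in $\Omega_k$ relates $B_k(\omega,\tau\vnu)$ and $B_k(\omega,\vnu)$ by a power of $\hT_k$. Composing with $\hpi_k$ annihilates $\hT_k$ and gives the equality of the $C_k$'s.

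For (b), the first identity is immediate from (a) applied with $\tau^j\vnu = \vnu$. For the second, the hypothesis forces $\vnu$ to consist of $k/j$ repetitions of $\vnup = (\nu_1,\dots,\nu_j)$, so $j \mid k$ and $\sum_{i=1}^j \nu_i = j(1-\omega)$, making $\vnup$ allowable at level $j$. The periodicity of $\{X_m(\vnu)\}$ under translation by $j$ in $S_k$ means the natural quotient $S_k \to S_j$ of degree $k/j$ intertwines the HM address system for $\vnu$ at level $k$ with that for $\vnup$ at level $j$, collapsing indices modulo $2j$; applying $\hpi_k$ then identifies $C_k(\omega,\vnu)$ with $C_j(\omega,\vnup)$. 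For (c), from~\eqref{dumb} and (a) we get $B_k(\omega,\vnu) = B_k(\omega,\tau^j\vnu)$ as $\sigma_k$-invariant sets; minimality and unique ergodicity give $\lambda_k(\omega,\vnu) = \lambda_k(\omega,\tau^j\vnu)$, and injectivity of $\lambda_k$ yields $\vnu = \tau^j\vnu$. For the homeomorphism assertion, suppose $\hpi_k(\us) = \hpi_k(\ut)$ with $\us,\ut \in B_k(\omega,\vnu)$; by Lemma~\ref{fact1}(c), $\ut = \hT_k^j\us$ for some $j$, and since $\hT_k$ commutes with $\sigma_k$, minimality forces $\hT_k^j(B_k(\omega,\vnu)) = B_k(\omega,\vnu)$, whence the first half of (c) contradicts the hypothesis unless $j \equiv 0 \pmod k$, so $\us = \ut$; continuity of $\hpi_k$ between compact Hausdorff spaces then upgrades injectivity to a homeomorphism.

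For (d), because $\hpi_k$ is a continuous, $\sigma_k$-equivariant surjection onto $C_k(\omega,\vnu)$ and $B_k(\omega,\vnu)$ is minimal, $C_k(\omega,\vnu)$ is $\sigma_1$-minimal in $\Omega_1$, and similarly for $\vnup$. The symbolic analog of Theorem~\ref{lift}(b) — proved by the same orbit-closure argument, or obtained by transferring through $\iota_k$ applied to the model map $f_m$, for which $\hLambda_k(f_m) = \Omega_k$ — gives $B_k(\omega,\vnup) = \hT_k^p(B_k(\omega,\vnu))$ for some $p$. By (a) the right-hand side equals $B_k(\omega,\tau^p\vnu)$, so unique ergodicity together with injectivity of $\lambda_k$ forces $\vnup = \tau^p\vnu$. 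The main obstacle is the bookkeeping in (a) — matching widths, starting points, and index shifts simultaneously modulo $2k$ — while the remaining parts are formal consequences of minimality, unique ergodicity, and the structural results already proven.
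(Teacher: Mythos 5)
Your argument is correct and follows essentially the same route as the paper: a direct translation computation on the HM address system for (a) (which the paper dismisses as "an easy consequence"), and then (b)--(d) from (a) via the injectivity of $\lambda_k$, unique ergodicity of minimal sets, and the symbolic covering-space facts. Two small points. First, your computation in (a) actually pins the power down as $\hT_k^{-j}=\hT_k^{k-j}$ rather than $\hT_k^{j}$ (check $k=3$, $\omega=1/3$, $\vnu=(1,\tfrac12,\tfrac12)$); this sign is immaterial everywhere it is used — note in particular that $\tau^{j}\vnu=\vnu$ iff $\tau^{-j}\vnu=\vnu$ — but you should commit to it rather than hedge with "a power of $\hT_k$." Second, in (b) the hypothesis $\tau^j\vnu=\vnu$ does \emph{not} force $j\mid k$ (take $\vnu$ constant and any $j$); it forces $\tau^{d}\vnu=\vnu$ for $d=\gcd(j,k)$, and the quotient/repetition argument should be run with $d$ in place of $j$ — a repair the paper itself also tacitly needs.
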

\begin{proof}
The fact that $B_k(\omega,\tau^j(\vnu)) = \hT_k^j( B_k( \omega,\nu))$
is an easy consequence of the HM construction and since
$\hpi_k \hT^k = \hpi_k$ we have 
$C_k(\omega,\tau^j(\vnu)) = C_k( \omega,\nu)$, proving (a)
The first part of (b) follows directly from (a) using the 
given fact that $\tau^j(\vnu) = \vnu$.

For the second part of (b), first note that if $\{X_i\}$ is the
address system for $k$ and $(\omega,\vnu)$ then since $\tau^j(\vnu) = \vnu$,
we have $T_k^j(X_i) = X_{i + 2j}$. This implies that
under the quotient $S_k\raw S_j$,   $\{X_i\}$ descends 
to an allowable HM-address system on $S_j$ using $(\omega,\vnup)$.
Thus using the dynamics $R_\omega$ on both address systems,
the corresponding entries of $B_k(\omega, \vnu)$ and
$B_j(\omega, \vnup)$ are equal $\mod 2$  and so 
$C_k(\omega, \vnu)  = C_j(\omega, \vnup)$.

To prove the first part of (c), as remarked in 
Remark~\ref{rk3}, if $B_k( \omega,\nu)$ is
minimal it is uniquely ergodic. Thus if \eqref{dumb} holds, then 
$\lambda_k(\omega,\vnu) =  \lambda_k( \omega,\tau^j(\vnu))$ and since
$\lambda_k$ is injective by Theorem~\ref{iotathm}, 
$\vnu = \tau^{j}(\vnu)$. Now for
the second part of (c), certainly 
$\hpi_k:B_k(\omega,\vnu) \raw C_k(\omega,\vnu)$ is continuous and onto,
so assume it is not injective. Then there exists $\us, \ut\in 
B_k(\omega,\vnu)$ with $\us\not=\ut$ and $\hpi_k(\us) = \hpi_k(\ut)$.
Thus  for some $0 < j' < k$, $\ut = \hT_k^{j'}(\us)$ and
so if $j = k-j'$, $B_k(\omega,\vnu) \cap T_k^j B_k(\omega,\vnu)
\not=\emptyset$. But by assumption $B_k(\omega,\vnu)$ is minimal
and so $B_k(\omega,\vnu) = T_k^j B_k(\omega,\vnu)$ and
so $\vnu = \tau^j(\vnu)$, a contradiction. Thus 
$\hpi_k:B_k(\omega,\vnu) \raw C_k(\omega,\vnu)$ is injective, as required

For part (d), $C_k(\omega, \vnu)=C_k(\omega, \vnup)$ implies
that
\begin{equation*}
\cup_{i=1}^k T^i( B_k(\omega, \vnu)) =\hpi^{-1}( C_k(\omega, \vnu))
=\hpi^{-1}(C_k(\omega, \vnup)) = \cup_{i=1}^k T^i( B_k(\omega, \vnup)).
\end{equation*}
Since each $T^i( B_k(\omega, \vnu))$ and $T^i( B_k(\omega, \vnup))$
is minimal, for some $j$, 
$T^j( B_k(\omega, \vnu))= B_k(\omega, \vnup)$, and
so by part (c), $\tau^j(\vnu)  = \vnup$.
\end{proof}

\begin{remark}\label{clustermaybe}
 It is possible that if $B_k(p/q,\vnu)$ is a cluster
of periodic orbits, $\pi_k$ could be injective on
some of them and not on others.
\end{remark}

\subsection{continuity and injectivity}

Let $\bHMk = \HM_k(g)/\tau$ with equivalence classes denoted $[\vnu]$.
 Note that $\tau^j(\vnu)\in \cD_k$
for some $j$ is resonant if and only if $\vnu$ is, so we may call
$[\vnu]$ resonant or nonresonant. 

 Since the $\tau$ action
preserves slices we define $\bHM_{k\omega} = \HM_{k\omega}(g)/\tau$.
The $\omega$-slices of  $\cC_k(g)$ and $\cO_k(g)$ are defined in the obvious
way. If $(p/q, \vnu)$ is a pure parameter so is 
$\tau^j(\vnu)$ for any $j$ and so we define $\bPure(k, p/q) = \Pure(k,p/q)/\tau$.
Note that $\bPure(k,p/q)$ is all $[\vnu]$ such that $B_k(p/q, \vnu)$
is a single periodic orbit it is \textit{not} all $[\vnu]$ such
that  $C_k(p/q, \vnu) = \hpi_k B_k(p/q, \vnu)$ is a single periodic
orbit.

\begin{definition}\label{alphadef}
Lemma~\ref{sym} implies
that $(\iota_1)^{-1} \circ C_k$ induces a map
$\theta_k:\bHM_k(g)\raw \cC_k(g)$ and
that $(\iota_1)^{-1}_* \circ \mu_k$ induces a map
$\beta_k:\bHM_k(g)\raw \cO_k(g)$.  The induced maps on slices
are  $\theta_{k\omega}:\bHM_{k\omega}(g)\raw \cC_{k\omega}(g)$
and   
$\beta_{k\omega}:\bHM_{k\omega}(g)\raw \cO_{k\omega}(g)$
\end{definition}

\begin{theorem}\label{alphathm} Assume $g\in\cG$, for each $k>0$,
\begin{enumerate}[(a)] 
\item The map $\theta_k$ is onto, 
continuous at nonresonant values and discontinuous at resonant values.
Restricted to an irrational slice it is injective, continuous at 
nonresonant values and discontinuous at resonant values. 
Restricted to a rational slices it is injective on the pure lattice. 
\item The map $\beta_k$ is a homeomorphism when restricted to irrational
slices and pure rational lattices. 
\end{enumerate}
\end{theorem}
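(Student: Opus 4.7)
\smallskip

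My plan is to factor both maps through the already-established parametrizations on $\cS_k(g)$ and $\cN_k(g)$ from Theorem~\ref{iotathm}, and then exploit the $\tau$-symmetries catalogued in Lemma~\ref{sym} to pass to the quotient $\bHMk$. Concretely, using the commutation $\hpi_k \iota_k = \iota_1 \pi_k$ from Lemma~\ref{fact1}(b), we have $\theta_k \circ q = \iota_1^{-1} \circ \pi_k \circ (\iota_k^{-1} \circ B_k)$ and $\beta_k \circ q = \iota_1^{-1}_* \circ (\pi_k)_* \circ ((\iota_k)^{-1}_* \circ \lambda_k)$, where $q:\HM_k(g)\raw\bHMk$ is the quotient map. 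Each factor other than the middle two is a homeomorphism (Lemma~\ref{iotainduced}, Corollary~\ref{basecor}), and $\pi_k$ is continuous, so all continuity and surjectivity properties transfer directly from Theorem~\ref{iotathm}.

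For part (a), surjectivity follows since every $Z\in\cC_k(g)$ lifts to some $Z^*\in\cS_k(g)$ by Definition~\ref{kfsminbase}, hence to some $B_k(\omega,\vnu)$ by Theorem~\ref{iotathm}, and $\pi_k(Z^*)=Z$. Continuity of $\theta_k$ at $[\vnu_0]$ nonresonant follows from the factorization together with continuity of $\iota_k^{-1}\circ B_k$ at $\vnu_0$. For discontinuity at a resonant $[\vnu_0]$, the perturbation sequences $(\omega^{(i)},\vnu^{(i)})$ constructed in Lemma~\ref{holes} can be chosen inside $\HM_k(g)$ (using that $\HM_k(g)$ is compact, so $\cD_k$-perturbations can be intersected with it after passing to a further subsequence and using lower semi-continuity of $B_k$). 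The limit $Z$ contains a non-recurrent point $\us$ whose explicit form in the proof of Lemma~\ref{holes} shows that its projection $\hpi_\infty(\us)$ is also non-recurrent, since the forbidden transition pattern survives reduction $\mymod 2$; thus $\hpi_k(Z) \neq C_k(\omega_0,\vnu_0)$ while $C_k(\omega^{(i)},\vnu^{(i)})\raw \hpi_k(Z)$ in Hausdorff distance. Checking this survival-under-projection is the step I expect to be the main obstacle, and it is handled by tracing through the two cases of Lemma~\ref{holes} and verifying that in each, the prefix block obstruction uses distinct parities.

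For injectivity on an irrational slice in (a), suppose $C_k(\alpha,\vnu) = C_k(\alpha,\vnup)$. By Theorem~\ref{Bkstruc}(a) both $B_k(\alpha,\vnu)$ and $B_k(\alpha,\vnup)$ are minimal (semi-Denjoy), so Lemma~\ref{sym}(d) gives $\vnup = \tau^j(\vnu)$ and hence $[\vnu]=[\vnup]$. Continuity and discontinuity on the irrational slice are inherited from Lemma~\ref{iota2lemma}(b) via the same factorization. For injectivity on $\bPure_{k,p/q}$, each $B_k(p/q,\vnu)$ with $\vnu$ pure is a single periodic orbit, hence minimal and uniquely ergodic, so again Lemma~\ref{sym}(d) applies.

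For part (b), restrict to an irrational slice $\alpha\not\in\Q$. Since $\lambda_{k\alpha}$ is a homeomorphism from $\HM_{k\alpha}(g)$ onto $\hcN_{k\alpha}(g)$ (Theorem~\ref{iotathm}(b)), and the finite group $\langle\tau\rangle$ acts on the compact domain, the induced map $\beta_{k\alpha}\circ q$ is continuous with compact domain and Hausdorff target. If $\beta_{k\alpha}([\vnu])=\beta_{k\alpha}([\vnup])$ then the projected measures agree, hence their supports $C_k(\alpha,\vnu)$ and $C_k(\alpha,\vnup)$ agree (each is minimal, so support equals the whole set), and the injectivity of $\theta_{k\alpha}$ shown above gives $[\vnu]=[\vnup]$. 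A continuous bijection from compact to Hausdorff is a homeomorphism. On the pure rational lattice $\bPure_{k,p/q}(g)$ the domain is discrete and finite, so continuity is automatic and injectivity follows in the same way, noting that by Remark~\ref{unique2} pure parameters are in bijection with symbolic kfsm periodic orbits.
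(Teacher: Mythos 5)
Your overall architecture --- factoring $\theta_k$ and $\beta_k$ through the commutative square relating $B_k,\lambda_k$ on $\HM_k(g)$ to $C_k,\mu_k$ on the quotient, importing continuity and surjectivity from Theorem~\ref{iotathm} and Lemma~\ref{iota2lemma}, and deducing injectivity on irrational slices and on the pure lattice from minimality together with Lemma~\ref{sym}(d) --- is the same as the paper's, and those parts are sound (modulo a spurious $\iota_1^{-1}$ in your factorization: the correct identity is $\theta_k\circ q=\pi_k\circ(\iota_k^{-1}\circ B_k)$). The gap is in the step you yourself flag as the main obstacle: showing that the discontinuity produced by Lemma~\ref{holes} survives projection by $\hpi_k$. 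You assert that $\hpi_\infty(\us)$ is non-recurrent because ``the forbidden transition pattern survives reduction $\mymod 2$,'' but the prefix-block obstruction does not transfer this way. In Lemma~\ref{holes}(a) the obstruction is that any length-$(n+1)$ block of the tail beginning with the \emph{specific} odd symbol $a$ must end with $b$, while the prefix of $\us$ begins with $a$ and ends with $b+1$. After reduction $\mymod 2$ every odd symbol becomes $1$, so a recurrence of the projected prefix block at position $i$ only says that the corresponding block of $\us$ begins with \emph{some} odd symbol; by the rigidity in the proof of Lemma~\ref{fact1}(c) that block is the translate $a+2j,\dots,(b+1)+2j$ of the prefix for some $j$, and for $j\not\equiv 0$ nothing forbids such a block from occurring, because the resonance $R_\alpha^n(\ell_a)=\ell_b$ is not $\hT_k$-equivariant (the widths $\nu_i$ of the even address intervals differ). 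So ``distinct parities'' is not enough, and no direct block-counting argument downstairs is available.

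The claim you need is weaker and has a clean covering-space proof, which is what the paper does: it suffices to show $\hpi_k(\us)\notin C_k(\alpha,\vnu)$. If $\hpi_k(\us)\in C_k(\alpha,\vnu)$, then $\hpi_k(\us)=\hpi_k(\ut)$ for some $\ut\in B_k(\alpha,\vnu)$, so Lemma~\ref{fact1}(c) gives $\us=\hT_k^j(\ut)$ for some $j$. Since $\hT_k^j$ conjugates $\sigma_k$ on $B_k(\alpha,\vnu)$ --- a minimal set by Theorem~\ref{Bkstruc} --- to $\sigma_k$ on $\hT_k^j(B_k(\alpha,\vnu))$, the point $\us$ lies in a minimal set and is therefore recurrent, contradicting Lemma~\ref{holes}. (The same argument, run with subsequences and the finite order of $\hT_k$, would also rescue your stronger intermediate claim that $\hpi_k(\us)$ is non-recurrent, but it is not needed.) Substituting this for the parity argument closes the proof; the remaining discontinuity assertions follow in the same way.
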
 
\begin{proof} By construction we have the following commuting diagram
\begin{equation}\label{cdiag}
 \begin{CD}
 \HM_k(g) @>B_k>>\hcS_k(g) @>{\iota_k^{-1}}>>\cS_k(g)\\
 @V{\sim}VV    @VV{\hpi_k}V @VV{\pi_k}V\\
 \bHM_k(g) @>C_k>>\hcC_k(g) @>{\iota_1^{-1}}>>\cC_k(g)
 \end{CD}
 \end{equation}
with the vertical maps all onto and continuous
and $\theta_k$ the composition of the bottom
horizontal maps and the map $C_k$ also denotes the map
induced on equivalence classes in $\bHM_k(g)$.
Since
the given versions of ${\iota_k}$ and $\iota_1$ are homeomorphisms
we need only consider $C_k$ and $\mu_k$. The
fact that these are continuous follows from Lemma~\ref{iota2lemma}
and  the just stated properties
of the diagram as do the various continuity assertions in
the theorem. We prove the discontinuity result for $C_k$ on
irrational slices. The other discontinuity assertions follow
similarly.

Assume $(\alpha, \vnu)$ is resonant with $\alpha\not\in\Q$.
 From Lemma~\ref{holes} and its proof  we
have a sequence $(\alpha, \vnu^{(i)}) \raw  (\alpha, \vnu)$ so
that $B_k(\alpha, \vnu^{(i)}) \raw  Z$ and a 
$\us\in Z\setminus B_k(\alpha, \vnu)$ with $\us$ nonrecurrent. 
In the quotients $[\vnu^{(i)}] \raw [\vnu]$ and $C_k(\alpha, \vnu^{(i)})
\raw \hpi_k(Z)$ by continuity. We need to show that
$\hpi_k(Z)\not= C_k(\alpha, \vnu)$  Now if $\pi_k(\us)\in 
\hpi_k(Z)\setminus C_k(\alpha, \vnu)$ we are done so assume
$\pi_k(\us)\in C_k(\alpha, \vnu)$. Thus for some $\ut\in B_k(\alpha, \vnu)$,
$\pi_k(\us) = \pi_k(\ut)$ and so by Lemma~\ref{fact1}(e), 
for some $j$, $\hT^j_k(\us) =
\ut$. This implies that the action of $\sigma_k$ on $\Cl(o(\us, \sigma_k))$
is conjugated to that on  $\Cl(o(\ut, \sigma_k))$ by $\hT_k^j$. 
But by the classification Theorem~\ref{Bkstruc},
 $\Cl(o(\ut, \sigma_k))$ is a minimal set
and thus so is $\Cl(o(\us, \sigma_k))$ and so $\us$ is recurrent, a
contradiction, yielding the discontinuity.

To show $C_k$ is injective on the sets indicated, assume $C_k(\omega, \vnu) =
C_k(\omega, \vnup)$ with either $\omega = p/q$ and $\vnu, \vnup$ in
the pure lattice or $\omega\not\in\Q$. In either case $B_k(\omega, \vnu)$
and $B_k(\omega, \vnup)$ are minimal and since $\hpi_k$ is a 
semiconjugacy, $C_k(\omega, \vnu)$ and $C_k(\omega, \vnup)$ are
also. Thus by Lemma~\ref{sym}(d), for some $j$, $\vnu = \tau^{j}\vnup$
and so $[\vnu] = [\vnup]$.

Now for part (b), there is a diagram similar to \eqref{cdiag} 
for $\beta_k$. Since Denjoy minimal sets and individual periodic
orbits are uniquely ergodic, the injectivity asserted for $\mu_k$
follows from that of $C_k$ just proved. Continuity and surjectivity follow
from the diagram and Lemma~\ref{iota2lemma}. 
\end{proof}

\begin{remark}\label{clustereg} We remark on the relationship 
of pure parameters to $C_k$ and $B_k$. As a short hand we indicate symbolic
periodic orbits by their repeating block.
 A simple computation shows that $B_2(2/5, (3/5, 3/5)) = 
01223\ \cup\ 00123$ and so $C_2(2/5, (3/5, 3/5)) = 01001$. Note
that as required $\hT_2( 01223) = 00123$ and  $01001$ is the $2/5$-Sturmian.
 Now $B_2(2/5, (4/5, 2/5)) =  00123$ and so
$C_2(2/5, (4/5, 2/5)) = 01001 = C_2(2/5, (3/5, 3/5))$. A further
computation shows that both  $\mu_2(2/5, (4/5, 2/5))$ and
$\mu_2(2/5, (3/5, 3/5))$ are the unique invariant measure on 
$01001$ and thus $\mu_2$ is not injective on rational
slices of $\bHM_k$ despite the fact that it is injective
on rational slices of $\HM_k$. The underlying explanation is that
being a pure parameter requires $B_k$ to be a single periodic
orbit not that $C_k$ be one.
\end{remark}

\begin{definition}\label{skewbas}
Let $\mathcal{Q}_k = P_k/\tau$ where $P_k\subset \R^{k+1}$ is the
standard $k$-dimensional simplex and $\tau$ is the shift.
Equivalence classes in $\mathcal{Q}_k$ are denoted $[\cdot ]$. For
$\heta\in \hcO_k(g)$ with $\rho(\heta) = \omega$ 
from Theorem~\ref{alphathm} we may find an $\vnu$ with 
$\heta = \mu_k(\omega, \vnu)$.
 The skewness of $\heta$ is defined
as $\bgamma(\heta) := [\gamma(\lambda_k(\omega, \vnu))]$.
Note that by Lemma~\ref{sym} this is independent of the choice of 
$\mu_k(\omega, \vnu)$. And also for $\eta\in\cO_k(g)$ via
$\bgamma(\eta) = \bgamma((\iota_k)_*(\eta))$.
\end{definition}

\begin{remark}\label{paraminv} On an irrational quotient 
slice $\cO_{k, \alpha}$,  let $\bgamma_1 = k \bgamma - \alpha\One$,
then $\bgamma_1$ is the inverse of $\beta_k$ and may be viewed
as a parameterization of $\hcO_{k, \omega}$ by skewness as in
Remark~\ref{skewrk}. Also as in that remark, skewness also
provides a parameterization of the quotient of the pure parameters.
\end{remark}

\subsection{Sturmian minimal sets, the case $k=1$}
We will need the special and much studied case of symbolic kfsm sets
for $k=1$.  When $k=1$ there is only one allowable choice for $\nu$, namely 
$\nu = 1-\omega$ and
so we write $C_1(\omega)$ for $C_1(\omega, 1-\omega) = B_1(\omega, 1-\omega)$.
 When $\omega$ is
rational $C_1(\omega)$ is a single periodic orbit and when
$\omega$ is irrational it is a semi-Denjoy minimal set.
These minimal sets (and associated sequences)
have significant historical importance and an abundance of 
literature (see \cite{sturm} for a survey). Their main importance here is as an indicator of when
a given number is in the rotation set.
\begin{definition}\label{sturmdef}
The minimal set $C_1(\omega)\subset\Sigma_2^+$ 
is called the Sturmian minimal set
with rotation number $\omega$. 
\end{definition}
To avoid confusion with the many definitions in the literature
we  note that here ``Sturmian'' refers to a minimal
set and not a sequence and it is subset of the \textit{one-sided} shift
$\Sigma_2^+$.  The next result is standard and we remark on one proof in
Remark~\ref{interprk}.
\begin{lemma}\label{sturmorder}
$\omega\in\rho(\lrk)$ if and only
if $C_1(\omega)\subset\lrk$.
If $0 \leq \omega_1 < \omega_2 \leq 1$, then in $\Sigma_2^+$,
\begin{equation*}
\min C_1(\omega_1) < \min C_1(\omega_2) <
\max C_1(\omega_1) < \max C_2(\omega_2).
\end{equation*}
\end{lemma}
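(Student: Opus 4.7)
The plan is to attack the two assertions separately, with part (1) being the substantive one and part (2) following from monotonicity of the HM construction in $\omega$.

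For the backward direction of (1), if $C_1(\omega)\subset \lrk$ then since $\hrho(C_1(\omega))=\omega$ by Theorem~\ref{Bkstruc} (applied with $k=1$), we immediately get $\omega\in\hrho(\lrk)=\rho(\lrk)$. For the forward direction, I plan to use the interpolated semi-monotone map trick (the $k=1$ case of part~(c) of Theorem~\ref{maintfae}). First I would note that $\rho(\lrk) = \rho(\Lambda_1(g)) = \rho(g)$: the containment $\subset\rho(g)$ is trivial, and equality comes from \eqref{int} since the recurrent sets of the upper and lower maps $g_u,g_\ell$ are contained in the positive-slope region $\Lambda_1(g)$. Then, for $\omega\in\rho(g)$, I would construct a semi-monotone circle map $H_\omega\in\cH_1$ by flattening $\tg$ at an appropriate height $c$ in the negative-slope region so that $H_\omega$ agrees with $g$ on the positive-slope region. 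The map $c\mapsto \rho(H_c)$ is continuous by Lemma~\ref{basic}(b) and sweeps out $[\rho(g_\ell),\rho(g_u)]=\rho(g)$ as $c$ varies, so by the intermediate value theorem some $c$ realizes $\rho(H_\omega)=\omega$. By Lemma~\ref{basic} and Lemma~\ref{flat}(c), the recurrent set $Z_\omega$ of $H_\omega$ lies in the positive slope region, hence in $\Lambda_1(g)$, and is invariant and semi-monotone under $\tg = \tH_\omega$ there, i.e., it is a recurrent 1fsm set for $g$ with rotation number $\omega$. Its image $\iota_1(Z_\omega)\subset \lrk$ is then a recurrent symbolic 1fsm set with rotation number $\omega$, and by Theorem~\ref{Bkstruc}(c) applied with $k=1$ (where the only allowable parameter is $\nu=1-\omega$) it must equal $C_1(\omega)$.

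For part (2), the inequalities come from monotonicity of the HM address system $X_0^{(\omega)}=[0,1-\omega]$, $X_1^{(\omega)}=[1-\omega,1]$ in $\omega$. As $\omega$ grows, $X_1^{(\omega)}$ strictly enlarges to the left, so the itinerary of any generic point under $R_\omega$ acquires more $1$'s at comparable positions; this increases the lex order of both the lex-minimum and lex-maximum of the resulting orbit closure. Concretely, $\min C_1(\omega)$ is the itinerary of a right-limit at the discontinuity $x=1-\omega$ and begins with $0$ (for $\omega<1$), while $\max C_1(\omega)$ is the corresponding left-limit and begins with $1$ (for $\omega>0$); this gives the outer two inequalities $\min C_1(\omega_1)<\min C_1(\omega_2)$ and $\max C_1(\omega_1)<\max C_1(\omega_2)$, while the middle inequality $\min C_1(\omega_2)<\max C_1(\omega_1)$ is forced by the leading symbols $0$ and $1$ respectively.

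The main obstacle is the construction of the interpolated map $H_\omega$ with prescribed rotation number $\omega\in\rho(g)$, particularly verifying that the family of candidate maps is continuous in the relevant parameter and that one can legitimately apply the intermediate value theorem; this is the content of Remark~\ref{interprk} and is classical (going back to \cite{CGT, Mcirc, Bdcirc} among others). Everything else is bookkeeping: the uniqueness statement in Theorem~\ref{Bkstruc}(c) for $k=1$ is what upgrades ``some recurrent 1fsm set with rotation number $\omega$ lies in $\lrk$'' into the identification of that set with $C_1(\omega)$, and the monotonicity arguments for part (2) are direct consequences of the HM set-up.
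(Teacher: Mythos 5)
Your treatment of the first assertion is correct and is exactly the route the paper intends (it is sketched in Remark~\ref{interprk}(d)): identify $\rho(\lrk)$ with $\rho(g)$ via the upper and lower maps, realize each $\omega\in\rho(g)$ as $\rho(H_c)$ for some $c$ in the one-parameter interpolated family by continuity and the intermediate value theorem, observe that the recurrent set $Z_c\subset P(H_c)\subset\Lambda_1(g)$ is a recurrent $1$-fold semi-monotone set for $g$ with rotation number $\omega$, and use the uniqueness of the allowable parameter $\nu=1-\omega$ in Theorem~\ref{Bkstruc}(c) to conclude $\iota_1(Z_c)=C_1(\omega)\subset\lrk$. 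The converse via $\hrho(C_1(\omega))=\omega$ is also fine.

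The second assertion, however, contains a genuine gap. Your justification of the outer inequalities rests on the claim that as $\omega$ increases ``the itinerary of any generic point under $R_\omega$ acquires more $1$'s at comparable positions,'' and that this ``increases the lex order.'' Both halves fail. First, there is no pointwise containment of $1$-positions: both the address system \emph{and} the rotation change with $\omega$, and $\{x+i\omega\}$ is not monotone in $\omega$. For example with $x$ just to the right of $0$ and $i=2$, the symbol at position $2$ is $1$ for $\omega_1=2/5$ but $0$ for $\omega_2=1/2$. Second, even a genuine surplus of $1$'s does not force a lexicographic increase: $.0111\cdots<.1000\cdots$ although the former has more $1$'s in every prefix of length $\geq 2$. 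The statement you need — that $\omega\mapsto\min C_1(\omega)$ and $\omega\mapsto\max C_1(\omega)$ are strictly increasing in lexicographic order — is a real (classical) fact about Sturmian minimax sequences and requires an argument. Within the framework you have already set up, the clean route is the one the paper gestures at: write $\min C_1(\omega_i)=\iota_1(\min Z_{c_i})$ and $\max C_1(\omega_i)=\iota_1(\max Z_{c_i})$ for parameters $c_1<c_2$ with $\rho(H_{c_1})=\omega_1<\omega_2=\rho(H_{c_2})$, use that $\iota_1$ is order preserving (Theorem~\ref{conjugacy}), and then prove $\min Z_{c_1}<\min Z_{c_2}$ and $\max Z_{c_1}<\max Z_{c_2}$ by a comparison argument for the recurrent sets of the monotone family $H_{c_1}\leq H_{c_2}$ (if the extrema interleaved the wrong way, semi-monotonicity would force $\rho(H_{c_2})\leq\rho(H_{c_1})$). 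Your argument for the middle inequality via the leading symbols $0$ and $1$ is fine, granted $\omega_1>0$ and $\omega_2<1$ so that both leading symbols are as claimed.
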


\begin{definition}\label{vnudef}
For a fixed $k$, let $\vnu_s(\omega)$ be defined by
$(\vnu_s(\omega))_i = 1-\omega$ for  $i = 1, \dots, k$.
\end{definition}
\begin{remark}\label{sturmrk}
Since $\tau(\vnu_s) = \vnu_s$ if follows directly from Lemma~\ref{sym} that
for any $k$,  $C_k(\omega, \vnu_s) = C_1(\omega)$, the Sturmian minimal
set with rotation number $\omega$.
\end{remark}

\section{Structure of $\HM_k(g)$}
One obvious property of $\HM_k(g)$ is the symmetry $\tau(\HM_k(g)) = 
\HM_k(g)$ for all $k$. The full structure of $\HM_k(g)$ for a general
 $g\in\cG$  is quite complicated and will be saved for future papers. 
Here we focus on the structure near the diagonal in $\cD_k$.

\subsection{Irrationals on the diagonal}
We parameterize the diagonal $\Delta_k\subset\cD_k$
by $\omega$ using $\vnu_s(\omega)\in\Delta_k$ as defined
in the previous section and so 
$$
\Delta_k = \{ \vnu_s(\omega) \colon 0 \leq \omega \leq 1\}.
$$

For $g\in\cG$ the next result asserts that for 
each irrational $\alpha\in\Intt(\rho(g))$
there is some $\delta = \delta(\alpha)$ so that
the neighborhood $N_{\delta}(\vnu_s(\alpha))\subset
\HM_k(g)$. It gives the proof of Theorem~\ref{main}(a).

\begin{theorem}\label{mainone}
Assume $g\in\cG$ and  $k>0$ 
\begin{enumerate}[(a)]
\item $\HM_k(g) \cap \Delta_k = \{\vnu_s(\omega)\colon \omega\in\rho(g)\}$
\item If  $\alpha\not\in\Q$ with 
$\alpha\in\Intt(\rho(g))$, there exists a
$\delta>0$ so that $N_\delta(\vnu_s(\alpha))\subset \HM_k(g)$.
\item   If 
$\alpha\in\Intt(\rho(g))\setminus \Q$, then $\cO_k(g)$ 
contains a $(k-1)$-dimensional 
topological disc consisting of unique invariant measures
each supported on a member of
a family of $k$-fold semi-monotone semi-Denjoy minimal sets with
rotation number $\alpha$.

\end{enumerate}
\end{theorem}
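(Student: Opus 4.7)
The plan is as follows. For (a), I apply Remark~\ref{sturmrk} which gives $C_k(\omega,\vnu_s(\omega)) = C_1(\omega)$, so that $\vnu_s(\omega)\in\HM_k(g)$ unwinds via the definitions of $\HM_k(g)$ and $\hLambda_k(g) = \Omega_k\cap \hpi_k^{-1}(\lrk)$ to the condition $C_1(\omega)\subset \lrk$. By Lemma~\ref{sturmorder} this is equivalent to $\omega\in\rho(\lrk)$, and the conjugacy $\iota_1$ of Theorem~\ref{conjugacy} together with standard Sturmian realizability identifies $\rho(\lrk) = \rho(\Lambda_1(g)) = \rho(g)$.

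For (b), the strategy is to combine the Hausdorff continuity of $B_k$ at nonresonant parameters (Theorem~\ref{iotathm}(a)) with a strict interior claim for $C_1(\alpha)$ inside $\lrk$. First I would verify that for irrational $\alpha$ the parameter $\vnu_s(\alpha)$ is nonresonant by direct computation: the values are $\ell_{2j}=j$ and $\ell_{2j+1}=j+1-\alpha$, so any equation $R_\alpha^n(\ell_j)=\ell_{j'}$ with $n>1$ would force $m\alpha\in\Z$ for some nonzero integer $m$, contradicting $\alpha\notin\Q$. Next, since $\alpha\in\Intt(\rho(g))$, I pick $\omega_1<\alpha<\omega_2$ in $\rho(g)$; by part (a) both Sturmians $C_1(\omega_i)$ lie in $\lrk$, hence $\ukappa_0\leq\min C_1(\omega_1)$ and $\max C_1(\omega_2)\leq\ukappa_1$, while Lemma~\ref{sturmorder} yields the strict orderings $\min C_1(\omega_1)<\min C_1(\alpha)$ and $\max C_1(\alpha)<\max C_1(\omega_2)$. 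Thus $C_1(\alpha)$ lies strictly inside $[\ukappa_0,\ukappa_1]$ in the lexicographic order.

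Nonresonance gives Hausdorff continuity of $B_k$ at $\vnu_s(\alpha)$ (the argument of Theorem~\ref{iotathm}(a) works on all of $\cD_k$, not just $\HM_k(g)$), and composition with the continuous factor map $\hpi_k$ yields Hausdorff continuity of $C_k=\hpi_k\circ B_k$ at $\vnu_s(\alpha)$. Since the lexicographic order on the Cantor space $\Sigma_2^+$ is closed, both $\min$ and $\max$ are continuous functionals on the space of compact subsets with the Hausdorff topology. Combined with the strict inequalities above, for $\vnu$ in some $\cD_k$-ball about $\vnu_s(\alpha)$ one obtains $\min C_k(\omega,\vnu)\geq\ukappa_0$ and $\max C_k(\omega,\vnu)\leq\ukappa_1$; because $C_k(\omega,\vnu)$ is $\sigma_1$-invariant, this upgrades to $C_k(\omega,\vnu)\subset\lrk$, whence $\vnu\in\HM_k(g)$, proving (b).

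For (c), I intersect the $\cD_k$-ball from (b) with the affine slice $\{\sum\nu_i=k(1-\alpha)\}$ to obtain a $(k-1)$-dimensional open ball inside $\HM_{k\alpha}(g)$. By Theorem~\ref{alphathm}(b) the map $\beta_{k\alpha}$ is a homeomorphism from $\bHM_{k\alpha}(g)$ onto $\cO_{k\alpha}(g)$; choosing a small $(k-1)$-dimensional sub-ball centered away from $\vnu_s(\alpha)$ (on whose complement the cyclic action of $\tau$ is free), I obtain a disc on which the quotient map is a homeomorphism, whose image under $\beta_{k\alpha}$ is the desired $(k-1)$-dimensional topological disc in $\cO_{k\alpha}(g)$. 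Theorem~\ref{Bkstruc}(a) together with Remark~\ref{lots}(e) ensures that each element of this disc is the unique invariant measure on a $k$-fold semi-monotone semi-Denjoy minimal set with rotation number $\alpha$. The main obstacle I anticipate is the strict-interior step in (b): producing a concrete Hausdorff-metric separation of $C_1(\alpha)$ from both $\ukappa_0$ and $\ukappa_1$ that survives perturbation, which is precisely where the hypothesis $\alpha\in\Intt(\rho(g))$ (rather than merely $\alpha\in\rho(g)$) is used in an essential way.
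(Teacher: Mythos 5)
Your proposal is correct and follows essentially the same route as the paper: part (a) via the Sturmian identity $C_k(\omega,\vnu_s(\omega))=C_1(\omega)$ and Lemma~\ref{sturmorder}; part (b) by sandwiching $C_1(\alpha)$ strictly between Sturmians of nearby rotation numbers and invoking Hausdorff continuity of $C_k$ at the nonresonant parameter $\vnu_s(\alpha)$; part (c) via the homeomorphism $\beta_{k\alpha}$ on the irrational slice. Your extra care in checking nonresonance explicitly and in passing to a sub-ball where $\tau$ acts freely are sensible refinements of steps the paper treats more briefly, but not a different argument.
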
 
\begin{proof} Assume $\hLambda_1(g) = \lrk$.
 For (a) 
$C_k(\omega, \vnu_s(\omega)) = C_1(\omega)$, the Sturmian minimal set
with rotation number $\omega$ and from Lemma~\ref{sturmorder}, 
$C_1(\omega) \subset
\lrk$ if and only if $\omega\in\rho(\lrk) = \rho(g)$.

For (b) note that the pair $(\alpha, \vec{\nu}_s)$ is nonresonant.
We will first show
that  if $\alpha\in\Intt(\rho(g))$ then  there exists an $\epsilon >0$,
so that $\HD(C_k(\alpha, \vnu_s(\alpha)), C_k(\omega, \vnu)) < \epsilon$
implies $ C_k(\omega, \vnu) \subset 
 \lrk$. Pick $\alpha_1,\alpha_2\in\Intt(\rho( g))$  with 
$\alpha_1< \alpha <\alpha_2$. Thus by Lemma~\ref{sturmorder}, in $\Sigma_2^+$
\begin{equation*}
\ukappa_0 < \min C_1(\alpha_1) < \min  C_1(\alpha) < \max  C_1(\alpha) < 
\max  C_1(\alpha_2) < \ukappa_1
\end{equation*} and let 
\begin{equation*}
\epsilon = \min\{d(\min C_1(\alpha_1),  \min  C_1(\alpha)),
d(\max  C_1(\alpha), \max  C_1(\alpha_2))\}.
\end{equation*}
Thus $\HD(C_k(\alpha, \vnu_s(\alpha)), C_k(\omega, \vnu)) 
=\HD(C_1(\alpha), C_k(\omega, \vnu)) < \epsilon$ implies that
the compact, invariant set  $C_k(\omega, \vec{\nu})$ satisfies
$\min C_1(\alpha_1) < C_k(\omega, \vec{\nu}) < \max  C_1(\alpha_2)$
and so $ C_k(\omega, \vec{\nu}) \subset \lrk$.

Using the continuity of $C_k$ at nonresonant irrationals from
 Theorem~\ref{alphathm}(a) 
 there is a $\delta>0$ so that 
$\|(\omega,\vec{\nu}) - (\alpha, \vec{\nu}_s\| < \delta$ implies
$C_k(\omega,\vec{\nu}) \subset N_\epsilon(C_k(\alpha, \vec{\nu}_s)$,
 and so  $(\omega, \vnu)\in \HM(g)$.

Since $\tau(N_\delta(\vnu_s(\alpha))) = (N_\delta(\vnu_s(\alpha)))$,
the neighborhood descends to one in $\bHM_k(g)$ and 
$\beta_k$ is a homeomorphism
on irrational slices of $\bHM_k(g)$ (Theorem~\ref{alphathm}(b))
yielding (c).
\end{proof}

\section{Rational Slices}
In this section we study rational slices in the HM parameter
and in $\cS_k(g)$ and $\cC_k(g)$. As proved in Theorem~\ref{Bkstruc},
 each $B_k(p/q, \vnu)$
consists of a collection of periodic orbits all of period
$qk/\gcd(p,k)$ with the same rotation
number and as a set they are kfsm. Note that this is stronger than each
periodic orbit being individually kfsm. The invariant measure 
$\lambda_k(p/q, \vnu)$ is a convex combination of the unique measures
supported on each periodic orbit.

\subsection{periods in $\Omega_1$}
The next lemma examines how the periods of $B_k$ can change after
projection to $C_k$ via $\hpi_k$.

\begin{lemma}\label{symlem1} Fix $k>0$ and $p/q\in\Q$ and assume $\vnu$ is allowable for $p/q$.
If $\tau^j(\vnu) = \vnu$ with $0 < j \leq k$ and it
is the least such $j$, then the period of $C_k(p/q, \vnu)$
is $jq/\gcd(j,p)$.
\end{lemma}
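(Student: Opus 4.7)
My plan is to apply Lemma~\ref{sym}(b) to reduce the problem and then compute the period by tracking how the covering projection $\hpi_j$ acts on periodic orbits.  Since $\tau^j(\vnu)=\vnu$ with $j$ minimal, Lemma~\ref{sym}(b) gives $C_k(p/q,\vnu)=C_j(p/q,\vnup)$ where $\vnup=(\nu_1,\ldots,\nu_j)\in\cD_j$ has minimal $\tau$-period exactly $j$.  So it suffices to compute the period of $C_j(p/q,\vnup)=\hpi_j(B_j(p/q,\vnup))$.  By Theorem~\ref{Bkstruc}(b) applied at level $j$, every periodic orbit $O\subset B_j(p/q,\vnup)$ has $\sigma_j$-period $jq/\gcd(p,j)$; and since $\hpi_j$ is a semiconjugacy whose deck group is generated by $\hT_j$, the $\sigma_1$-period of each point of $\hpi_j(O)$ equals $|O|/|H_O|$, where $H_O:=\{b\in\Z/j:\hT_j^b(O)=O\}$, and every such period divides $jq/\gcd(p,j)$.

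The heart of the argument is to exhibit an orbit $O^\ast$ with $H_{O^\ast}=\{0\}$.  The enabling observation is that if $\hT_j^b(O)=O$ for some $0<b<j$, letting $\vec{\nu}_O$ denote the unique pure parameter for $O$ from Remark~\ref{unique2}, Lemma~\ref{sym}(a) gives
\[
B_j(p/q,\tau^b\vec{\nu}_O)=\hT_j^b B_j(p/q,\vec{\nu}_O)=O=B_j(p/q,\vec{\nu}_O),
\]
so uniqueness of the pure parameter forces $\tau^b\vec{\nu}_O=\vec{\nu}_O$.  Now enumerate the orbits of $B_j(p/q,\vnup)$ as $\{O_i\}$ with weights $\alpha_i:=\lambda_j(p/q,\vnup)(O_i)$.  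Comparing the skewness formula of Lemma~\ref{skewlem}(b) applied to both sides of $\lambda_j(p/q,\vnup)=\sum_i\alpha_i\lambda_j(p/q,\vec{\nu}_{O_i})$ produces the convex decomposition $\vnup=\sum_i\alpha_i\vec{\nu}_{O_i}$.  Were every $\vec{\nu}_{O_i}$ fixed by a common $\tau^{b_0}$ with $0<b_0<j$, then so would $\vnup$, contradicting minimality of $j$.  Hence some $O^\ast$ has $\vec{\nu}_{O^\ast}$ of minimal $\tau$-period $j$, whence $H_{O^\ast}=\{0\}$ and $\hpi_j(O^\ast)$ has $\sigma_1$-period exactly $jq/\gcd(p,j)$.

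The main obstacle I anticipate is the case in which the minimal $\tau$-periods $j_i$ of the pure parameters $\vec{\nu}_{O_i}$ all satisfy $j_i<j$ while their least common multiple equals $j$: the convex-combination argument rules out a single common $b_0$ but does not immediately produce an $O^\ast$ with $\vec{\nu}_{O^\ast}$ of full period $j$, and orbits with $j_i<j$ descend to strictly shorter cycles of $\sigma_1$-period $j_iq/\gcd(p,j_i)$, each dividing $jq/\gcd(p,j)$.  Consequently $C_k(p/q,\vnu)$ may a priori contain several distinct $\sigma_1$-periods, and the stated equality is naturally read as identifying the maximal (equivalently, least common multiple) such period; pinning the maximum down will likely require a finer combinatorial analysis of how the orbits of $B_j(p/q,\vnup)$ inherit the full $\tau$-asymmetry of $\vnup$, in the spirit of the proof of Lemma~\ref{sym}(c) but carried out orbit-by-orbit via the construction of pure parameters in Theorem~\ref{Bkstruc}(c).
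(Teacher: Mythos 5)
Your reduction via Lemma~\ref{sym}(b) to $C_j(p/q,\vnup)$ with $\vnup$ of full $\tau$-period $j$ is exactly the paper's first step, and your stabilizer bookkeeping is sound: the period of $\hpi_j(O)$ is $|O|/|H_O|$, and $\hT_j^b(O)=O$ does force $\tau^b\vec{\nu}_O=\vec{\nu}_O$ via Remark~\ref{unique2} and Lemma~\ref{sym}(a). But the gap you flag at the end is genuine and cannot be closed as the problem is posed: for a non-pure $\vnup$ the cluster $B_j(p/q,\vnup)$ really can contain orbits whose pure parameters have different proper $\tau$-periods, and then $C_k(p/q,\vnu)$ contains points of period strictly smaller than $jq/\gcd(j,p)$. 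Concretely, take $k=j=2$, $p/q=1/2$, $\vnu=(3/4,1/4)$ (allowable, full $\tau$-period $2$). Then $B_2(1/2,\vnu)$ is the two-orbit cluster $o((0013)^\infty)\cup o((0123)^\infty)$; the first orbit has trivial $\hT_2$-stabilizer and projects to $(0011)^\infty$ of period $4=jq/\gcd(j,p)$, while the second satisfies $\hT_2(O)=\sigma^2(O)=O$ and projects to $(01)^\infty$ of period $2$. So no refinement of the convex-combination argument can show that \emph{every} point of $C_k(p/q,\vnu)$ has period $jq/\gcd(j,p)$ --- that is false --- and whether \emph{some} orbit always attains the full period (ruling out your ``lcm scenario'') is precisely what your argument leaves open.

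The paper sidesteps all of this because its proof tacitly carries the minimality hypothesis of Lemma~\ref{sym}(c): it asserts that $\hpi_j:B_j(p/q,\vnup)\raw C_j(p/q,\vnup)$ is injective, which Lemma~\ref{sym}(c) guarantees only when $B_j(p/q,\vnup)$ is minimal, i.e.\ a single periodic orbit, i.e.\ $\vnup$ is pure --- and that is the only situation in which the lemma is actually used (Remark~\ref{ratperrk} applies it to pure parameters). Under that hypothesis your machinery closes in one line: there is a single orbit $O=B_j(p/q,\vnup)$ with $\vec{\nu}_O=\vnup$ of full $\tau$-period $j$, so by your own reduction $H_O=\{0\}$ and the period is $|O|=jq/\gcd(j,p)$. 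The fix is therefore not a finer combinatorial analysis but an added hypothesis: restrict to pure $\vnu$ (equivalently, $B_k(p/q,\vnu)$ minimal), and then either quote Lemma~\ref{sym}(c) directly, as the paper does, or run your stabilizer computation on the unique orbit.
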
 

\begin{proof} Recall from Theorem~\ref{Bkstruc}
 that the period of $B_k(p/q, \vnu)$ is
$kq/\gcd(k,p)$. If $j=k$ by Lemma~\ref{sym}(c) 
$\hpi_k: B_k(p/q, \vnu) \raw C_k(p/q, \vnu)$ is injective and 
since $\sigma_1\hp_k = \hpi_k \sigma_k$, $B_k(p/q, \vnu)$ and 
$C_k(p/q, \vnu)$ have the same period. Now if $j<k$  by Lemma~\ref{sym}(b),
$C_k(p/q, \vnu) = C_j(p/q, \vnup)$ where $\vnup = (\nu_1, \dots, \nu_j)$.
Since $j$ is the least such, $\hpi_j:B_j(p/q, \vnup)\raw C_j(p/q, \vnup)$
is injective and $C_j(p/q, \vnup)$ has period $jq/\gcd(j,p)$.
\end{proof}

\subsection{the rational structure theorem}
The theorem in this section describes in more detail how the 
measures on $p/q$-kfsm vary with the parameter.

In the HM construction fix $k$,  $0 < p/q < 1$ with $\gcd(p,q) = 1$
and an allowable $\vnu$. We often suppress dependence on these choices and
so $R = R_{p/q}$, etc. Let $N =qk/\gcd(p,k)$ so $N$ is the period of 
$R$ acting on $S_k$. Recall the address intervals are 
$X_j = [\ell_j, r_j]$ for $j = 0, \dots, 2k-1$ and so 
$r_j = \ell_{j+1}$. The good set
is $G$ and the itinerary map is $\zeta$. When we write $\zeta(x)$ it is
implicitly assumed that $x\in G$.

The orbit of $0$, $o(0, R)$, partitions $S_k$ into $N$ pieces, each
of width $k/N = \gcd(p,k)/q$. Thus $J = [k-\gcd(p,k)/q, 1)$ is a fundamental
domain for the action of $R$ on $S_k$ in the sense that $S_k =
\cup_{i=0}^{N-1} R^i(J)$ as a disjoint union.
 Thus for each $0 \leq p \leq 2k-1$
there is a unique $0 \leq m < N$ with
 $\ell_{p}\in R^{m}(J)$, and then let $d_{p} = R^{-m}(\ell_{p})$. Note that
since $|X_{2j+1}| = p/q$, $d_{2j} = d_{2j-1}$ and that
all $d_{2j+1}$ as well as both endpoints of $J$ are \textit{not} in $G$.
Finally, for $j = 0, 1, \dots, 2k-1$ and $x\in J\cap G$, let 
$M_j(x) = \{0 \leq i < N\colon \zeta(x)_i = j\} = \{i \colon R^i(x) \in X_j\}$. 

\begin{lemma}\label{iff2}
\begin{enumerate}[(a)] Assume $x, x'\in J\cap G$.
\item $\zeta(x) = \zeta(x')$ if and only if $M_{2j+1}(x) = M_{2j+1}(x')$
for all $j = 0, \dots, k-1$. 
\item For each $j$,  $M_{2j+1}(x) = M_{2j+1}(x')$ if and only if $
x$ and $x'$ are in the same component of $J-\{d_{2j+1}\}$.
\item For each $k$, $\# M_{2j}(x) = \#M_{2j}(x')$ 
if and only if $x$ and $x'$ are
in the same component of $\Sigma\setminus\{d_{2j-1}, d_{2j+1} \}$ where 
$\Sigma$ is the circle $\Sigma = J/\mysim$ with $(k-1/N)\sim k$. 
\end{enumerate}
\end{lemma}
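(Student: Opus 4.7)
The approach is to analyze directly how the sets $M_j(x)$ vary with $x\in J\cap G$, exploiting the allowable transitions \eqref{transitions} in $\Omega_k$ together with the identity $|X_{2j+1}| = \omega$, which forces the pullbacks of the two endpoints of $X_{2j+1}$ to coincide inside the fundamental domain.

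For part (a), the forward implication is immediate. For the converse, \eqref{transitions} says that from an odd symbol $2j+1$ one transitions only to $2j+2$ or $2j+3$, and from an even symbol $2j$ only to $2j$ or $2j+1$. Hence knowledge of all $M_{2j+1}(x)$ pinpoints exactly which positions $i$ of $\zeta(x)$ carry an odd symbol and which odd symbol it is. For any position $i$ with an even symbol, I will look ahead to the next position $i+n$ carrying an odd symbol $2j+1$; running the transitions backward, the only way to reach $2j+1$ after $n$ steps through even symbols is $\zeta(x)_{i+n-1} = \zeta(x)_{i+n-2} = \dots = \zeta(x)_i = 2j$. Such a next odd position always exists because $\omega>0$ forces $\bigcup_j M_{2j+1}(x)\neq\emptyset$ and $\zeta(x)$ is $N$-periodic. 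Applying the same reasoning to $x'$ gives $\zeta(x)_i = \zeta(x')_i$ at every $i$.

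For part (b), note that $i\in M_{2j+1}(x)$ iff $x\in R^{-i}(X_{2j+1}) = [R^{-i}(\ell_{2j+1}),R^{-i}(\ell_{2j+2})]$. Since $|X_{2j+1}|=\omega$, we have $R(\ell_{2j+1})=\ell_{2j+2}$, so the pullback orbits of $\ell_{2j+1}$ and $\ell_{2j+2}$ coincide, and each meets $J$ in exactly one point, namely $d_{2j+1}=d_{2j+2}$. Hence $M_{2j+1}$ is constant on each component of $(J\cap G)\setminus\{d_{2j+1}\}$. To see that it actually changes across $d_{2j+1}$, let $x_-<d_{2j+1}<x_+$ in $J$ and let $m$ be the unique index with $\ell_{2j+1}\in R^m(J)$. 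Because $R^m$ is an isometry and $|R^m(J)| = k/N = \gcd(p,k)/q \le \omega = |X_{2j+1}|$, the image $R^m(x_+)$ lies in $R^m(J)$ just to the right of $\ell_{2j+1}$, hence inside $X_{2j+1}$, while $R^m(x_-)$ lies just to the left of $\ell_{2j+1}$, hence in $X_{2j}$. Therefore $m\in M_{2j+1}(x_+)\setminus M_{2j+1}(x_-)$.

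Part (c) proceeds by the same dissection applied to $M_{2j}(x)$: the boundary points of $R^{-i}(X_{2j})=[R^{-i}(\ell_{2j}),R^{-i}(\ell_{2j+1})]$ have pullback orbits meeting $J$ at $d_{2j}=d_{2j-1}$ and at $d_{2j+1}$. Tracking signs, as $x$ crosses $d_{2j-1}$ rightward the relevant orbit point $R^{m'}(x)$ (where $\ell_{2j}\in R^{m'}(J)$) passes from $X_{2j-1}$ into $X_{2j}$, so $\#M_{2j}$ increases by $1$; symmetrically, at $d_{2j+1}$ the point $R^m(x)$ passes from $X_{2j}$ into $X_{2j+1}$ and $\#M_{2j}$ decreases by $1$. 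Gluing the endpoints of $J$ into $\Sigma$ provides the closed loop on which these two $\pm1$ contributions must balance, so $\#M_{2j}$ is constant on each arc of $\Sigma\setminus\{d_{2j-1},d_{2j+1}\}$ and differs by $1$ between arcs. In the degenerate case $d_{2j-1}=d_{2j+1}$ (a resonance between the pullback orbits of $\ell_{2j-1}$ and $\ell_{2j+1}$) the two contributions cancel at the same point and $\#M_{2j}$ is globally constant, matching the single component of $\Sigma$ minus a point. The main obstacle throughout is the bookkeeping that $R^m(x)$ actually stays in $R^m(J)$ as $x$ varies near $d_{2j+1}$; this is guaranteed by $|R^m(J)| \le |X_{2j\pm 1}|$, and the collapse $R(\ell_{2j+1})=\ell_{2j+2}$ is what ensures the two endpoint orbits of $X_{2j+1}$ contribute only a single marked point in $J$.
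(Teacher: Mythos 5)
Your proposal is correct and follows essentially the same route as the paper: part (a) via the transition structure of $\Omega_k$ (your backward argument from the next odd position is, if anything, slightly cleaner than the paper's forward induction), part (b) via the collapse $R(\ell_{2j+1})=\ell_{2j+2}$ forcing a single marked point $d_{2j+1}$ in $J$, and part (c) via the $\pm1$ jumps at $d_{2j-1}$ and $d_{2j+1}$ that the paper carries out as an explicit case analysis. No gaps.
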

\begin{proof}
First note that both endpoints of $J$ are not in $G$ so they are
out of consideration for $x$ and $x'$  in what follows.

For (a) one implication is obvious. For the other, it suffices to  
we show that the collection of  $M_{2j+1}(x)$  determines 
$\us = \zeta(x)$. By Remark~\ref{rk3} we know that $\us\in\Omega_k$ 
and so its
one step transitions are governed by \eqref{transitions}. If $s_i = 2j+1$ then
$s_{i+1} = 2j+2 $ or $2j+3$ and we know which depending on whether
$i+1\in M_{2j + 3}$ or not. Similarly, if $s_i = 2j$ then $s_{i+1}$
is determined by whether $i+1\in M_{2j + 1}$ or not. Thus
$\us$ is determined completing the proof of (a).

For (b), first note that 
$|X_{2k-1}| = p/q$ and $(p/\gcd(p,k)) (k/N) = p/q$. Thus $X_{2k-1}$
is exactly filled with  $p/\gcd(p,k)$
iterates of $J$ with disjoint interiors.
 Thus $M_{2k-1}(x) = M_{2k-1}(x')$ for all $x\in J\cap G$.
Thus we only consider $0 \leq j < k-1$.
If $i$ is such that $R^i(J)\subset X_{2j+1}$, then $i\in M_{2j+1}(x)$
for all $x\in J$ and if $R^i(\Intt(J))\cap X_{2j+1} = \emptyset$ 
then $i\not\in M_{2j+1}(x)$ for all $x\in J\cap G$.
If $\ell_{2j+1}\in R^i(\Intt(J))$, then for 
$x>d_{2j+1}$ in $J$, then $i\in M_{2j+1}(x)$ 
and for $x<d_{2j+1}$, $i\not\in M_{2j+1}(x)$.
The last situation to consider is $r_{2j+1}\in R^{i}(\Intt(J))$.
Since $|X_{2k+1}| = p/q$, we have $R^{i}(d_{2j+1}) = r_{2j+1}$ and
so then for $x>d_{2j+1}$ in $J$, then $i\not\in M_{2j+1}(x)$ 
and for $x<d_{2j+1}$, $i\in M_{2j+1}(x)$, completing the proof
of (b).

If $d_{2j-1} = d_{2j+1}$ every $x\in\Sigma\setminus\{d_{2j-1}\}$ 
as the same number of indices in $M_{2j}$, so assume that
$d_{2j-1} < d_{2j+1}$ with the other inequality being similar.
If $i$ is such that $R^i(J)\subset X_{2j}$ then $i\in M_{2j}(x)$ for
all $x\in J$. 
If $i$ is such that $R^i(J)\cap X_{2j} = \emptyset$ then 
$i\not\in M_{2j}(x)$ for
all $x\in J$.
If $i$ is such that $\ell_{2j}\in R^i(J)$ then $i\in M_{2j}(x)$ 
if and only if $x > d_{2j-1}$ in $J$.
If $i$ is such that $\ell_{2j+1} = r_{2j}\in R^i(J)$ then $i\in M_{2j}(x)$ 
if and only if $x < d_{2j+1}$ in $J$, finishing the proof.
\end{proof}

\begin{corollary}\label{mainratthm}
If the nonempty connected components of $\Sigma \setminus \cup_{j=0}^{k-1}
\{d_{2j+1}\}$ are $K_1, \dots, K_m$, then $B_k(p/q, \nu)$ consists of
exactly $m$ distinct periodic orbits $P_1, \dots, P_m$ with 
$\zeta(x)\in P_j$ if and only if $x\in o(K_j, R)$. Further,
$\lambda_k(p/q,\nu) = \sum  N |K_j| \delta_j$ with 
$\delta_j$ the unique invariant probability measure supported in $P_j$
and $N = qk/\gcd(p,k)$.
\end{corollary}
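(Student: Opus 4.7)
The plan is to combine Lemma~\ref{iff2} with the fundamental-domain structure of $R=R_{p/q}$ on $S_k$ to obtain the orbit decomposition of $B_k(p/q,\vnu)$, and then to isolate the distinct periodic orbits via a symbol-count invariant. Parts (a) and (b) of Lemma~\ref{iff2} together show that two points $x,x'\in J\cap G$ satisfy $\zeta(x)=\zeta(x')$ if and only if they lie in a common connected component of $\Sigma\setminus\bigcup_{l=0}^{k-1}\{d_{2l+1}\}$. Thus $\zeta|_{J\cap G}$ takes exactly $m$ distinct values $\us_1,\dots,\us_m$, where $\us_j$ is the common itinerary of every $x\in K_j\cap G$. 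Since $J$ is a fundamental domain, $S_k=\bigsqcup_{i=0}^{N-1}R^i(J)$, and the $R$-invariance of $G$ gives for each $x\in G$ a unique $y\in J\cap G$ and $0\le i<N$ with $x=R^i(y)$; hence $\zeta(x)=\sigma^i\us_{j(y)}$ lies in the $\sigma$-orbit $P_{j(y)}:=o(\us_{j(y)},\sigma)$. Conversely, iterating $R$ on any point of $K_j\cap G$ realizes every element of $P_j$, and taking closures yields $B_k(p/q,\vnu)=\bigcup_{j=1}^m P_j$; by Theorem~\ref{Bkstruc}(b) each $P_j$ has period exactly $N$.

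The main obstacle is showing that $P_1,\dots,P_m$ are pairwise distinct. The plan is to use the symbol-count invariant
\[
\Phi(y)\;=\;\bigl(\#M_0(y),\,\#M_2(y),\,\dots,\,\#M_{2k-2}(y)\bigr)\in\Z^k,
\]
which tallies the occurrences of each even symbol in one period of $\zeta(y)$ and is therefore constant on each $\sigma$-orbit. Lemma~\ref{iff2}(c) shows that $\Phi$ is constant on each $K_j$, and I will analyze how $\Phi$ changes as $y$ crosses the boundary $d_{2l+1}$ in the positive direction: if $R^{m_l}(d_{2l+1})=\ell_{2l+1}$, then crossing $d_{2l+1}$ moves the index $m_l$ from $M_{2l}$ to $M_{2l+1}$ and simultaneously moves $m_l+1$ (with $R^{m_l+1}(d_{2l+1})=\ell_{2l+2}$) from $M_{2l+1}$ to $M_{2l+2}$, producing a net change of $v_l:=e_{l+1\bmod k}-e_l$ in $\Phi$. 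The key algebraic fact is that $\sum_{l\in S}v_l=0$ forces the coefficient $[m{-}1\in S]-[m\in S]$ of each $e_m$ to vanish, which requires $S\subseteq\{0,\dots,k-1\}$ to be shift-invariant and hence either empty or the full set. Traversing from $K_j$ to any $K_i\neq K_j$ crosses a nonempty proper collection of the distinct $d$'s, yielding a cumulative change $\sum_{l\in T}v_l\neq 0$ for some nonempty proper $T\subsetneq\{0,\dots,k-1\}$; therefore $\Phi(K_i)\neq\Phi(K_j)$ and $P_i\neq P_j$.

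Once distinctness is in hand, $\zeta^{-1}(\us_j)=K_j$ exactly, since any extension of $\zeta^{-1}(\us_j)$ into some $R^n(J)$ would force $\sigma^{-n}\us_j$ to coincide with some $\us_{j'}$, contradicting distinctness. By rotation invariance every $\us\in P_j$ satisfies $m(\zeta^{-1}(\us))=|K_j|$, so the push-forward $\lambda_k(p/q,\vnu)=(\zeta_k)_*m/k$ assigns total mass $N|K_j|/k$ to $P_j$, distributed uniformly across its $N$ points. Writing $\delta_j$ for the unique invariant probability measure on $P_j$, this is exactly the convex combination described by the formula, up to the normalization convention encoded in the displayed expression.
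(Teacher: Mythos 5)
Your argument is correct and follows the paper's own route: reduce to the fundamental domain $J$, use Lemma~\ref{iff2}(a) and (b) to identify the level sets of $\zeta$ on $J$ with the components $K_j$, separate the resulting $\sigma$-orbits by the symbol-count invariants of Lemma~\ref{iff2}(c), and push forward Lebesgue measure. The one place you genuinely add something is the distinctness step: the paper simply cites Lemma~\ref{iff2}(c), which separates components only with respect to each individual pair $\{d_{2j-1},d_{2j+1}\}$, whereas you compute the jump $e_{(l+1)\bmod k}-e_l$ of the full count vector across each $d_{2l+1}$ and observe that no nonempty proper subset of these jumps can sum to zero; this supplies the small combinatorial step (that some consecutive pair must actually separate two given components) which the paper leaves implicit. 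Your closing remark is also right: as displayed, $\sum N|K_j|\,\delta_j$ has total mass $k$ rather than $1$, so the formula should carry the factor $1/k$ coming from the normalization $\lambda_k=(\zeta_k)_*(m)/k$.
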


\begin{proof}
As noted above, $J$ is a fundamental domain for the action of
$R$ on $S_k$ and so it it suffices to study $\zeta(x)$ for $x\in J$.

Combining Lemma~\ref{iff2}(a) and (b) we have that for $x\in \Sigma$,
$\zeta(x) = \zeta(x')$ if and only if $x$ and $x'$ are in the same component
$K_j$. Further, using Lemma~\ref{iff2}(c), $\zeta(x)$ and $\zeta(x')$ 
can be on  the same $\sigma$-orbit if and only if they are 
in the same component $K_j$, proving the first sentence of
the theorem.  The second sentence follows from the definition
of $\lambda_k$,  the fact that $S_k = \cup_{i=1}^N R^i(J)$, and
$R$ preserves Lebesgue measure.
\end{proof}

\subsection{The pure lattice and the structure of $\HM_{k p/q}$}
We now describe the pure affine lattice in more detail with
an eye towards counting the number of $p/q$-periodic kfsm sets.
For this a new method of specifying the address system in $S_k$
 will be useful. We fix a $k$ and an $\omega = p/q$ and sometimes
suppress dependence on them

 Recall that a
pair $(p/q, \vnu)$ specifies an address system
$\{X_j(p/q, \vnu)\}$ with each $X_j(p/q, \vnu) = [\ell_j, r_j]$. 
For each $i = 1, \dots, k-1$ let $\delta_i$ be 
the signed displacement of the address system from its totally
symmetric position given by $(\omega, \vnu_s(\omega))$. Thus
\begin{equation}\label{delta}
\delta_i( \vnu) = (\nu_1 + \dots \nu_i) - i(1-\omega).
\end{equation}
Since in the HM-construction $X_{2k}$ is fixed for all
$\vnu$, the vector $\vdelta(\omega)$ is $(k-1)$-dimensional
and so $\vdelta:\cD_{k,p/q} \raw \vdelta(\cD_{k,p/q})$ is
an affine map from the simplex $\sum \nu_i = k (1-p/q)$ to
a subset of $\R^{k-1}$. Note that $\vdelta(\vnu_s) = \vec{0}$.

\begin{lemma}\label{latticelem} Given $k$ and $ p/q$ there exists
a $\veta$ with $\|\veta\|_\infty \leq  \gcd(p,k)/(2q)$ so that
$\vnu\in\cD_{k,p/q}$ is a pure parameter for $p/q$ if and only if 
$\vdelta(p/q, \vnu) \in \veta +  (\gcd(p,k)/q) \Z^{k-1}$ in 
$\vdelta(\cD_{k,p/q})$. 
\end{lemma}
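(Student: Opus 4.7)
The plan is to read off pure parameters directly from Corollary~\ref{mainratthm} and translate the resulting combinatorial condition into a congruence on $\vdelta$. Set $d = \gcd(p,k)$ and $N = qk/d$. By Corollary~\ref{mainratthm}, $B_k(p/q,\vnu)$ is a single periodic orbit precisely when $\Sigma \setminus \bigcup_{j=0}^{k-1}\{d_{2j+1}\}$ has exactly one connected component. Since deleting $n \geq 1$ distinct points from a circle produces exactly $n$ components, this amounts to the coincidence $d_1 = d_3 = \cdots = d_{2k-1}$ as points of $\Sigma$.

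Next I will rephrase this in terms of the $\ell_{2j+1}$. Two points of $S_k$ project to the same point of $\Sigma$ iff they differ by an element of the cyclic subgroup generated by $p/q$ in $\R/k\Z$. Since $\gcd(p,q)=1$, that subgroup has order $N$ and equals $(d/q)\Z/k\Z$, so purity becomes $\ell_{2j+1} - \ell_1 \in (d/q)\Z \pmod{k}$ for $j = 1,\ldots,k-1$. A direct substitution from \eqref{addressHM} and \eqref{delta} gives $\ell_{2j+1} = \delta_{j+1}+(j+1)-\omega$, so $\ell_{2j+1}-\ell_1 = \delta_{j+1}-\delta_1 + j$. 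Because $k = N(d/q)$ is itself a multiple of $d/q$, the condition mod $k$ collapses to
$$\delta_{j+1}-\delta_1+j \equiv 0 \pmod{d/q}, \qquad j = 1,\ldots,k-1.$$

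To isolate the absolute coset of each $\delta_i$, I will use the simplex constraint $\sum_{i=1}^k \nu_i = k(1-\omega)$, which forces $\delta_k = 0$. Feeding $j=k-1$ into the congruence above, together with $k \equiv 0 \pmod{d/q}$, yields $\delta_1 \equiv -1 \pmod{d/q}$, and back-substituting into the remaining congruences gives $\delta_i \equiv -i \pmod{d/q}$ for every $i = 1,\ldots,k-1$. Taking $\eta_i$ to be the representative of $-i$ modulo $d/q$ lying in $(-d/(2q), d/(2q)]$ produces $\veta$ with $\|\veta\|_\infty \leq d/(2q)$, and the congruence system is exactly $\vdelta \in \veta + (d/q)\Z^{k-1}$. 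The converse direction is immediate by reversing these equivalences. No single step is a serious obstacle; the items requiring care will be identifying the subgroup generated by $p/q$ with $(d/q)\Z \bmod k$ using $\gcd(p,q)=1$, and invoking $\delta_k = 0$ in the $j = k-1$ congruence to pin down the absolute coset of $\delta_1$ rather than only the pairwise differences $\delta_{j+1}-\delta_1$.
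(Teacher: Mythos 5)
Your proof is correct and follows essentially the same route as the paper's: both reduce purity via the rational structure theorem (Corollary~\ref{mainratthm}) to the condition that the points $d_{2j+1}$ collapse to a single point of $\Sigma$, translate this into membership of the $\ell_{2j+1}$ in a coset of the subgroup $(\gcd(p,k)/q)\Z \bmod k$, and then rewrite that in the $\vdelta$ coordinates. The only cosmetic difference is that the paper observes directly that all $\ell_{2j-1}$ must lie on $o(0,R_{p/q})$ (since $d_{2k-1}$ is automatically the endpoint of $J$), whereas you derive pairwise congruences first and then anchor the absolute coset using $\delta_k=0$ and the $j=k-1$ relation — logically equivalent, and your explicit formula $\eta_i \equiv -i \pmod{\gcd(p,k)/q}$ is a nice sharpening of what the paper leaves implicit.
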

\begin{proof}
The structure theorem Theorem~\ref{mainratthm} implies that $B_k(p/q, \vnu)$ is a single periodic
orbit if and only if no $d_{2j-1}$ is in the interior of $J$.
This happens if and only if all $\ell_{2j-1}$ are contained
in $o(0, R_{p/q})$. Now $o(0, R_{p/q})$ divides $S_k$ evenly into
subintervals of length  $\gcd(p,k)/q$. For each $j = 1, \dots, k-1$
let $m_j$ be such that $R^{m_j}_{p/q}(0)$ is the point on
$o(0, R_{p/q})$ that is closest to $\ell_{2j-1}$ and define 
$\eta_j =  \ell_{2j-1} - R^{m_j}_{p/q}(0)$. Thus
$\|\veta\|_\infty \leq \gcd(p,k)/(2q)$ and $\vnu$ is pure
if and only if $\phi(\vnu) \in \veta +  (\gcd(p,k)/q) \Z^{k-1}$.
\end{proof}  
\begin{definition}\label{latticedef}
The set $L = \veta +  (\gcd(p,k)/q) \Z^{k-1}\cap \vdelta(\cD_{k,p/q})$
is called the $p/q$-pure affine lattice as is  its pre-image 
$\vdelta^{-1}(L)\subset\R^{n-1}$
\end{definition}

\subsection{Sub-resonance and the size of clusters}\label{subres}
\begin{definition} When $\omega=p/q$, the pair $(p/q, \nu)$ is 
called \textit{sub-resonant} if for some $qk/\gcd(p,k)>n>1$ and $j\not=j'$, 
$R_\omega^n(\ell_j) = \ell_{j'}$. 
\end{definition}
It follows from Theorem~\ref{mainratthm} that the number of 
sub-resonances in a $(p/q, \vnu) \in \cD_{k, p/q}$ controls
the number of distinct periodic orbits in a cluster $B_k(p/q, \vnu)$
with no sub-resonance corresponding to $k$ distinct periodic orbits
and all the $\ell_j$ on a single $R_{p/q}$ orbit   
corresponding  to $B_k(p/q, \vnu)$ being a single periodic orbit
so $(p/q, \vnu)$ is a pure parameter

The set sub-resonance parameters is a finite collection of codimension 
one affine subspaces in $\cD_{k, p/q}$ and thus the 
no sub-resonance case is open, dense and full measure in $\cD_{k, p/q}$.
Thus in $\HM_{k, p/q}$ the typical parameter corresponds to 
a cluster of $k$ periodic orbits. It also follows that the
assignment $\vnu\mapsto B_k(p/q, \vnu)$ restricted to 
$\HM_{k, p/q}$ is constant and thus
continuous on connected components of the no sub-resonance parameters
and is discontinuous at the sub-resonance parameters.

\subsection{Estimating the number of $p/q$ kfsm sets}\label{ratest}
For a given $g\in \cG$ the number of points from the pure 
$p/q$-lattice $\Pure_{k, p/q}$ contained in $\HM_{k,p/q}(g)$
tells us how many distinct periodic orbits there are in 
$\hcB_k(g)$. So by Lemma~\ref{sym} it yields  how many distinct periodic
$p/q$-kfsm sets $g$ has. We get an estimate for this number using
the continuity properties of $B_k$ from Theorem~\ref{iotathm}
 and the relationship
of kfsm sets in $S_k$ to those in $S^1$. The next result proves
Theorem~\ref{main}(b). 
\begin{theorem}\label{ratgrowthyhm}
If  $\alpha\in\Intt(\rho(g))$, $\alpha\not\in\Q$, $k>0$ and
$p_n/q_n$ is a sequence of rationals in lowest terms with
$p_n/q_n \raw \alpha$, then there exists a $C>0$ so that 
for sufficiently large $n$ the number of distinct 
periodic  $p_n/q_n$ kfsm sets in $\Lambda_1(g)$ is greater than or equal to
$C q_n^{k-1}$.
\end{theorem}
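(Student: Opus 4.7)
The plan is to combine the open neighborhood of $\vnu_s(\alpha)$ produced by Theorem~\ref{mainone}(b) with the explicit lattice description of pure $p/q$-parameters from Lemma~\ref{latticelem}, and then translate the resulting pure parameter count into a count of periodic kfsm sets in $S^1$ using Lemma~\ref{sym} and Theorem~\ref{alphathm}(a).

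First I would apply Theorem~\ref{mainone}(b) to obtain $\delta_0 > 0$ with $N_{\delta_0}(\vnu_s(\alpha)) \subset \HM_k(g)$. Since $\omega \mapsto \vnu_s(\omega)$ is continuous (Definition~\ref{vnudef}), for $n$ sufficiently large $\|\vnu_s(p_n/q_n) - \vnu_s(\alpha)\|_\infty < \delta_0/2$, and hence $N_{\delta_0/2}(\vnu_s(p_n/q_n)) \subset \HM_k(g)$. Intersecting with the affine slice $\cD_{k,p_n/q_n}$ (which contains $\vnu_s(p_n/q_n)$ by Definition~\ref{vnudef}) yields a $(k-1)$-dimensional ball $B_n$ of radius $\delta_0/2$ contained in $\HM_{k,p_n/q_n}(g)$.

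Next I would push $B_n$ forward by the affine map $\vdelta$ of \eqref{delta}. Because the map $\vdelta$ restricted to $\cD_{k,p_n/q_n}$ is an affine map whose linear part is given by the same integer matrix $(\delta_i/\delta\nu_j)$ for every choice of $p_n/q_n$, the image $\vdelta(B_n)$ contains a Euclidean ball in $\vdelta(\cD_{k,p_n/q_n}) \subset \R^{k-1}$ of some radius $\delta_1 > 0$ that depends only on $\delta_0$ and $k$. By Lemma~\ref{latticelem}, the pure $p_n/q_n$-parameters lying in $\cD_{k,p_n/q_n}$ correspond under $\vdelta$ to the points of a translate of $(\gcd(p_n,k)/q_n)\Z^{k-1}$, an affine lattice of spacing at most $k/q_n$. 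A standard lattice-point count in a ball of radius $\delta_1$ then furnishes at least $C_1 (q_n/k)^{k-1} \geq C_2\, q_n^{k-1}$ pure parameters in $\HM_{k,p_n/q_n}(g)$, with $C_1, C_2$ depending only on $k$ and $\delta_0$.

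Finally, Remark~\ref{unique2} assigns to each pure parameter $\vnu \in \Pure_{k,p_n/q_n}(g)$ a distinct symbolic kfsm periodic orbit $B_k(p_n/q_n,\vnu) \subset \Omega_k$. Projecting to $\Omega_1$ via $\hpi_k$, Lemma~\ref{sym}(a),(d) shows that $B_k(p_n/q_n,\vnu)$ and $B_k(p_n/q_n,\vnup)$ project to the same set exactly when $\vnup = \tau^j(\vnu)$ for some $j$, and Theorem~\ref{alphathm}(a) confirms that $\theta_k$ is injective on $\bPure_{k,p_n/q_n}(g)$. Each $\tau$-orbit in $\Pure_{k,p_n/q_n}(g)$ has cardinality at most $k$, so the number of distinct $p_n/q_n$-periodic kfsm sets in $\Lambda_1(g)$ is at least $C_2\,q_n^{k-1}/k = C\,q_n^{k-1}$, completing the proof. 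The main obstacle is confirming that the lattice spacing $\gcd(p_n,k)/q_n$ is comparable to $1/q_n$ (which is immediate since $\gcd(p_n,k) \leq k$) and that the distortion of $\vdelta$ on the slices is uniformly bounded in $n$; both are elementary consequences of the formula \eqref{delta}, but they are what allows a single constant $C$ to work for every large $n$.
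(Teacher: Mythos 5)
Your proposal is correct and follows essentially the same route as the paper's proof: an open neighborhood of $\vnu_s(\alpha)$ in $\HM_k(g)$ from Theorem~\ref{mainone}(b), transfer to the rational slices for large $n$, a lattice-point count in $\vdelta$-coordinates via Lemma~\ref{latticelem} using $\gcd(p_n,k)\le k$, and finally division by at most $k$ for the $\tau$-action together with injectivity of $\theta_k$ on the pure lattice. The only (welcome) difference is that you make explicit the uniformity of the distortion of $\vdelta$ across slices, which the paper passes over by simply invoking that $\vdelta$ is a homeomorphism.
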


\begin{proof}
By Theorem~\ref{iotathm}(b) there is an $\epsilon_1 > 0$ so that 
$N_{\epsilon_1}(\vnu_s(\alpha))\subset \HM_k(g)$ where recall that
$\vnu_s(\alpha)$ is the Sturmian $\vnu$ for $\alpha$ on the diagonal
of $\cD_k$.
Since $\vdelta$ is a homeomorphism there is an $\epsilon$-ball $H$ 
in the max norm with $\epsilon>0$ about 
$\vec{0}$ in $\vdelta(\cD_k)$ with $\vdelta^{-1}(H) \subset \HM_k(g)$. Thus
if $|p_n/q_n - \alpha| < \epsilon$ there is a $\epsilon$-ball
in the max norm, i.e., a $(k-1)$-dimensional hypercube $H_1$, about
$(p_n/q_n, \vec{0})$ in $\vdelta(\cD_{k,p_n/q_n})$ with
$\vdelta^{-1}(H_1) \subset \HM_{k,p_n/q_n}(g)$.

We next estimate  the number of pure resonance $\vnu$ in
$H_1$. By Lemma~\ref{latticelem}, the pure $\vnu$ form an affine lattice 
with linear separation $\gcd(p_n, k)/q_n$.
Thus for  $p_n/q_n$ close enough
to $\alpha$, the number of lattice points in $H_1$
is larger than 
\begin{equation*}
\left( \frac{\epsilon q_n}{ \gcd(p_n, k)}\right)^{k-1}
\geq \left(\frac{\epsilon q_n}{ k}\right)^{k-1}.
\end{equation*}
since $\gcd(p_n, k) \leq k$. Thus since $\vdelta$ is a homeomorphism
the same estimate  holds for the number of
pure lattice points in $\vdelta^{-1}(H_1)\subset \HM_{k, p_n/q_n}(g)$. By
Theorem~\ref{iotathm} this tells us how many distinct periodic $p_n/q_n$ are
in $\hcS_k(g)$ and thus in $\cS_k(g)$ by Theorem~\ref{symandreal}.

To project this estimate to kfsm sets in $S^1$, recall from Theorem~\ref{alphathm} 
that $\theta_k:\bHM_{k, p_n/q_n}(g)\raw \cC_{k, p_n/q_n}(g)$
is injective on the pure lattice. The projection
$\Pure(k, p_n/q_n) \raw \bPure(k, p_n/q_n)$ is at most $k$ to $1$ 
 and so  the number of distinct 
$p_n/q_n$ periodic orbits in $\cC(g)$ is greater than or equal
to 
$$
\frac{1}{k}\left(\frac{\epsilon}{k}\right)^{k-1} q_n^{k-1}.
$$
\end{proof}
\begin{remark}\label{ratperrk} For a pure $\vnu$ for $p/q$
 when $\tau^j(\vnu) = \vnu$ for some $0 < j < k$
the period of the $C_k(p/q, \vnu)$ counted in the theorem is 
$jq/\gcd(j, p)$ (Lemma~\ref{symlem1}). In the typical case of no such symmetry
the period is $kq/\gcd(k, p)$. So, for example, when $p$ and $k$ are 
relatively prime, the counted periodic orbit has rotation type $(pk, qk)$
and when $k$ divides $p$ the rotation type is $(p,q)$. By making
judicious choices of the sequence $p_n/q_n\raw \alpha$, 
one can control the rotation types of the counted periodic orbits.
\end{remark}



\section{Parameterization via the interpolated family of maps}\label{hall}
We return now to  the heuristic description in the introduction
 of kfsm sets via a family
of interpolated semi-monotone maps and prove a few results and connections
to the HM-parameterization. Since we are mainly developing a heuristic,
some details are left to the reader. In many ways this point
of view is better for studying kfsm sets while the HM construction
is better for measures. Initially the 
parameterization depends on the map $\tg\in\cG$ but using the model map
we will get a uniform parameterization.

\subsection{The family of k-fold interpolated maps for $g\in\cG$}
Fix $g\in\cG$ with preferred lift $\tg$. For 
$y\in [g(x_{min} + n), g(x_{max}+ n)]$ there is a unique
 $x \in [x_{min}+n, x_{max}+n]$
with $\tg(x) = y$. Denote this $x$ as $b_n(y)$ ($b$ for branch). 
Let 
$L_g = \tg(\min(\Lambda_\infty(g) \cap I_0))$ and 
$U_g = \tg(\max(\Lambda_\infty(g) \cap I_{-1}))$ with the $I_i$
as defined in Section~\ref{Idef}.
Note from the definition of the class $\cG$, $0 \leq L_g < U_g \leq 1$
and by equivariance, $L_g + j = \tg(\min(\Lambda_\infty(g) \cap I_{2j}))$
and $U_g + j= \tg(\max(\Lambda_\infty(g) \cap I_{2j-1}))$.

\begin{definition}
For $\vec{c}\in \R^k$ define $\tilde{c}\in\R^k$ via 
$\tilde{c}_j = c_j + j-1$ for $j = 1, \dots, k$.
\end{definition}


Fix $k>0$. For $\vec{c}\in  [L_g, U_g]^{k}$ and for
$j = 1, 2, \dots, k$ define $\tH_{k \vec{c}}(x)$ on
 $[b_{-1}(0), b_{-1}(0) + k ]$ as
\begin{equation*}
	\tH_{k\vec{c}}(x) = \begin{cases}
		\tilde{c}_j & \text{when} \ x\in[b_{j-2}(c_j), 
b_{j-1}(c_j)] \\
		\tg(x) & \text{otherwise}
	\end{cases}
\end{equation*}
and extend to $\tH_{k\vec{c}}:\R\raw \R$ so that 
$\tH_{k\vec{c}}(x+k) = \tH_{k\vec{c}}(x) + k$. See Figure~\ref{figure1}.  
Next define $H_{k\vec{c}}:S_k\raw S_k$ as the descent of
$H_{k,\vec{c}}$ to $S_k$.

\medskip

\noindent\textbf{Example: The model map} For the model map $f_m$, $x_{min} = 0,
x_{max} = 1/2, L = 0, U = 1/2$ and $b_j(y) = (y+ 2j)/3$.

\medskip

Given a compact $Z\subset\Lambda_k(g)$, for $j = 0, \dots, k$  let 
$$
\ell_j'(Z) = \tg_k( \max\{Z\cap I_{2j-1}\}) - (j-1)\ \text{and}  \  
r_j'(Z) = \tg_k( \min\{Z\cap I_{2j}\}) - (j-1).  
$$  
If for some $j$ we have $\ell_j' < L_g$ let $\ell_j = L_g$
otherwise let $\ell_j = \ell_j'$. Similarly, 
 and if for some $j$, we have $r_j' > R_g$ let  $r_j = R_g$
 otherwise let $r_j = r_j'$. Not that these $r$'s and $\ell$'s are
unrelated to those in Section~\ref{holesec}.
\begin{theorem}\label{boxthm} Assume $Z\subset \Lambda_k(g)$ 
is compact and invariant.
The following are equivalent
\begin{enumerate}[(a)]
\item $Z$ is a kfsm set
\item For $j = 1, \dots, k$, $\ell_j(Z) \leq  r_j(Z)$. 
\item $Z\subset P(H_{k\vc})$ for
\begin{equation}\label{box}
\vc\in 
\prod_{j=1}^k [ \ell_j(Z), r_j(Z)] 
\end{equation} 
thus $(\tg_k)_{\vert Z} = (H_{k\vc})_{\vert Z}$.
\end{enumerate}
\end{theorem}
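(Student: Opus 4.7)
The plan is to verify the cyclic chain $(c) \Rightarrow (a) \Rightarrow (b) \Rightarrow (c)$. The implication $(c) \Rightarrow (a)$ is essentially immediate from Lemma~\ref{tfae}: by construction $\tH_{k\vec{c}}$ is continuous, equivariant under $x\mapsto x+k$, and weakly order preserving, since each interpolation replaces a decreasing hump of $\tg$ by a flat plateau at a height nested between the two adjacent positive-slope pieces. Hence $H_{k\vec{c}}$ is a semi-monotone degree-one map on $S_k$, the hypothesis $Z\subset P(H_{k\vec{c}})$ places $Z$ off the interiors of the flat spots where $H_{k\vec{c}} = \tg_k$, and Lemma~\ref{tfae} gives that $Z$ is kfsm.

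For $(a) \Rightarrow (b)$, take the $\tg$-invariant lift $Z'\subset\R$ from Definition~\ref{kfolddef}(b). For each $j$ with both $Z\cap I_{2j-1}$ and $Z\cap I_{2j}$ nonempty, set $z_1 = \max(Z'\cap I_{2j-1})$ and $z_2 = \min(Z'\cap I_{2j})$. Because $I_{2j-1}\subset[j-1,j]$ and $I_{2j}\subset[j,j+1]$ we have $z_1 < z_2$, so weak order preservation of $\tg$ on $Z'$ forces $\tg(z_1)\leq\tg(z_2)$, i.e., $\ell_j'(Z)\leq r_j'(Z)$. The subsequent truncation into $[L_g,U_g]$ preserves the inequality because $L_g\leq U_g$. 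In the edge cases where one of the intersections is empty, the corresponding quantity defaults to $L_g$ or $U_g$ and the inequality holds trivially.

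For $(b) \Rightarrow (c)$, pick any $\vec{c}\in\prod_{j=1}^k[\ell_j(Z),r_j(Z)]$ and examine the flat spot $F_j$ of $\tH_{k\vec{c}}$ at height $\tilde c_j$. Its interior decomposes into a decreasing branch of $\tg$ (disjoint from $Z$ because $Z\subset\Lambda_k(g)$) together with an upper portion of the preceding positive-slope branch and a lower portion of the following one. The bound $c_j\geq\ell_j(Z)$, coupled with the strict monotonicity of $\tg$ on that upper portion, forces the left endpoint of $F_j$ to lie at or past $\max(Z\cap I_{2j-1})$, while $c_j\leq r_j(Z)$ forces the right endpoint at or before $\min(Z\cap I_{2j})$. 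Thus $Z\cap\Intt(F_j) = \emptyset$ for every $j$, so $Z\subset P(H_{k\vec{c}})$; and since $H_{k\vec{c}}$ agrees with $\tg_k$ off the flat spots, $(\tg_k)_{\vert Z} = (H_{k\vec{c}})_{\vert Z}$. The main bookkeeping obstacle is to correctly pair, for each flat-spot index $j$, the two address intervals whose $Z$-extrema are pinched by the endpoints of $F_j$; once that correspondence is pinned down, each implication becomes a one-line restatement of weak order preservation at a single peak of $\tg$.
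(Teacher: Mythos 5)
Your proposal is correct and follows essentially the same route as the paper: the paper's (very terse) proof establishes exactly the same cycle, namely (a)$\Rightarrow$(b) from weak order preservation at each peak, (b)$\Rightarrow$(c) by observing that the flat spots of $H_{k\vc}$ then miss $Z$, and (c)$\Rightarrow$(a) because invariant sets of nondecreasing degree-one maps are automatically kfsm. The one quibble is your justification that truncation into $[L_g,U_g]$ preserves $\ell_j'\leq r_j'$: the reason is not merely that $L_g\leq U_g$, but that $Z\subset\Lambda_k(g)$ forces $\ell_j'(Z)\leq U_g$ and $r_j'(Z)\geq L_g$ (compare the extrema of $Z$ over each address interval with those of $\Lambda_k(g)$ itself), which is what rules out, say, $r_j'<L_g$ turning the truncated inequality false.
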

\begin{proof}
If for some $j$, $\ell_j(Z) >  r_j(Z)$ then $g$ restricted to $Z$ doesn't
preserve the cyclic order, and so (a) implies (b). (c) implies (a) since
invariant sets in nondecreasing maps are always kfsm. Finally,
(b) says that $Z\subset P(H_{k\vc})$ for $\vc$ in the given range.
\end{proof}
\begin{definition}\label{boxdef}
For $Z\in \cS_k(g)$, let 
$$\Boxx_g(Z) = \prod_{j=1}^k [ \ell_j(Z), r_j(Z)] 
$$
and so $\Boxx_g(Z)\subset [L_g, U_g]^{k}$.
\end{definition}

\begin{remark}\label{boxremark}
\begin{enumerate}[(a)]
\item Nothing in the theorem requires $Z$ to be recurrent. 
If it is, so $Z\in\cB_k(g)$,  from Theorem~\ref{Bkstruc}, 
$\iota_k(Z) = B_k(\omega, \vnu)$
where $\omega = \rho_k(Z)$ and 
$\vnu\in\cD_{k\omega}$.
\item When $Z$ is a periodic orbit or cluster, $\Boxx_g(Z)$ is $k$-dimensional.
When $Z$ is a periodic orbit cluster its box is 
equal to the intersections of the boxes of its constituent single periodic
orbits. 
\item When $Z$ is a semi-Denjoy minimal set contained in $P(H_{k\vc})$
recall that a tight flat spot of $Z$ is one for which both endpoints of a flat spot 
of $H_{k\vc}$ are in $Z$. The dimension of $\Boxx_g(Z)$ is the same 
as the number of \textit{loose} flat spots in $Z$. Since by Lemma~\ref{flat}(b), 
$Z$
cannot have $k$ tight flat spots, the dimension of $\Boxx_g(Z)$ is between
$0$ and $k-1$.
\item Note that in contrast, in the HM parameterization, each single periodic
orbit or semi-Denjoy minimal corresponds to just one point.
\item  When $\alpha\not\in\Q$, if $Z = \iota_k^{-1}(B_k(\alpha, \vnu))$
 from the HM construction
then the  number of loose
flat spots of $Z$ is the same as the number of resonances of $(\alpha, \vnu)$
i.e., $j\not= j'$ with $R_\alpha^N(\ell_j) = \ell_{j'}$ for some $n>1$
which is then the same as the dimension of $\Boxx_g(Z)$.

\end{enumerate}
\end{remark}
\subsection{Nonrecurrence and kfsm sets
 that hit the negative slope region}\label{neghom}
Throughout this paper we have assumed that the kfsm sets were
recurrent and avoided the negative slope region. In this
section we use the interpolated maps to motivate these assumptions.

Assume now that $Z$ is a kfsm set for some $g\in\cG$ and $Z$ contains points in the
negative slope region of $g$. It still follows that $Z$ is an invariant
set of some $H_{k\vc}$. Let $Z'$ be the maximal recurrent set in
$P(H_{k\vc})$. A \textit{gap} of $Z'$ is a
component of the complement
of $Z'$ that contains a flat spot of $H_{k\vc}$. In formulas, a gap
is an interval $
(\max\{Z'\cap I_{2j-1}\}, \min\{Z'\cap I_{2j}\})  
$
for some $j$.
 Since $g_k$ acting on  $Z$ is semi-monotone, $Z$ can contain
at most one point $p_j$ in the negative slope region within each gap.

There are two cases. In the first, which may happen for both rational
and irrational rotation number, for all $j'$ there is some 
$n$ so that $f^i(p_{j'})\not\in \{p_j\}$ for all $i\geq n$. This implies
that $H_{k\vc}^i(p_j)\in P(H_{k\vc})$ for all $i\geq n$ and so by Lemma~\ref{flat}(c),
there is an $n'$ so that $H_{k\vc}^i(p_j)\in Z'$ for all $i\geq n'$.
Thus in  this case negative slope orbits  add no additional 
recurrent dynamics. In Figure~\ref{neghomfig} the disks give part of a periodic
kfsm set and the squares show additional homoclinic points to this
kfsm set in the negative and positive slope region.

\begin{figure}[htbp]
\begin{center}
\includegraphics[width=0.4\textwidth]{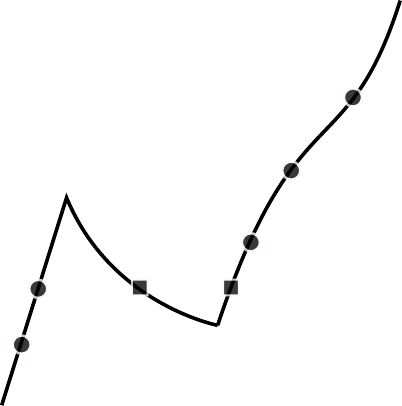}
\end{center}
\caption{A  semi-monotone set with homoclinic points}
\label{neghomfig}
\end{figure}

The second case holds for just rational rotation number and 
is when some $p_j$ is a periodic point; this does 
add new recurrent kfsm sets. Since the periodic
points on the endpoint of a gap always return, there is, in fact,
a periodic point in the negative slope region of each gap of $Z'$. By adjusting
$\vc$ we can assume that all these gap periodic orbits are also 
superstable periodic orbits of $H_{k\vc}$. Since the periodic points in 
$Z'$ are all unstable the periodic points of $Z'$ must alternate 
with these gap periodic points. In particular, the number of
gap periodic orbits equals the number of periodic orbits in $Z'$.
Thus the addition of the negative slope periodic orbits just adds a factor
of two to the basic estimates of Section~\ref{ratest}.

We again use Figure~\ref{neghomfig} but this time to discuss the holes in the
space of recurrent kfsm sets. Let $H_{c_0}$ be an interploted map 
whose flat spot contains the homoclinic points indicated by squares. 
Assume we are in the $k=1$ case and so each interpolated
map $H_c$ contains exactly one recurrent semi-monotone set $Z_c$ 
in its positive slope region.  
As $c_n$ increases to $c_0$  the sets $Z_{c_n}$ converge
in the Hausdorff topology to not just $Z_{c_0}$ but to that set
union the boxed point in the positive slope region.
A similar phenomenon happens as   $c_n$ decreases to $c_0$.
This phenomenon also clearly happens for loose gaps of semi-Denjoy
minimal sets and happens for all $k$. This is the geometric
explanation of the holes
in $\cB_k(g)$ and the discontinuity of $B_k$ discussed in 
Section~\ref{holesec}.

\subsection{The rotation number diagram }
Fix $g\in\cG$.
Since $H_{k\vc}:S_k\raw S_k$ we
define $R_k(\vc) = k \rho(D_k\circ H_{k\vc} \circ D_k^{-1})$.
Thus if $Z\subset P(  H_{k\vc})$ is compact invariant then 
  $\rho_k(Z) = R_k(\vc)$. We treat $R_k$ as a function
$R_k:[L_g, U_g]^{k}\raw \R$.

Let $\R_+^k = \{\vu\in\R_K \colon\ \text{all} \ u_i > 0\}$.
The open projective positive cone in $\R^k$ is 
$Q_k = \{\vu\in\R^k_+\colon  \|\vu\|_2 = 1\}$.
For a given $k, \omega$ define
 $\varphi_{-,\omega},\varphi_{+,\omega} :Q_k\raw \R^+$
as
\begin{align*}
\varphi_{-,\omega}(\vu) &= \min\{t\in\R^+\colon R_k(t\vu+\vL) = \omega\}\\
&\varphi_{+,\omega}(\vu)= \max\{t\in\R^+\colon R_k(t\vu+\vL) = \omega\}
\end{align*}
where $\vL = (L_g, L_g, \dots, L_g)$.
So $\varphi_{-,\omega}$ and $\varphi_{+,\omega}$
 give the top and bottom edges of
the level set $R_k^{-1}(\omega)$ when view from the origin.

\begin{theorem}\label{rhokthm} Assume $g\in\cG$ and construct $R_k$ as above 
\begin{enumerate}[(a)]
\item $R_k$ is continuous function and
is nondecreasing in $t$ along any line
$\vec{c} = t \vu + \vv$ 
 with all $v_i \geq 0$.
\item For all $\omega$ the functions
$\varphi_{-,\omega}$ and $\varphi_{+,\omega}$ are continuous.  
\item For rational $\omega$,  $\varphi_{-,p/q}< \varphi_{+,p/q}$ while for
$\alpha\not\in\Q$ then  $\varphi_{-,\alpha}=\varphi_{+,\alpha}$.
Thus each level set $R_k^{-1}(p/q)$ is homeomorphic to 
a $(k-1)$-dimensional open disk product a nontrivial closed interval while
each $R_k^{-1}(\alpha)$ is homeomorphic to 
a $(k-1)$-dimensional open disk.
\item $\rho(g) = [\rho(H_{L_g}), \rho(H_{U_g})] = \rho(\Lambda_1(g), g)$ 
\end{enumerate}
\end{theorem}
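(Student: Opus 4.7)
The plan is to prove (a)--(d) sequentially, with parts (a) and (b) following from continuity and monotonicity of $R_k$, part (c) containing the substantive devil's staircase dichotomy, and part (d) reducing to the upper/lower map characterization of $\rho(g)$ from~\eqref{int}. For (a), the assignment $\vc\mapsto \tH_{k\vc}$ is continuous from $[L_g,U_g]^k$ to $\tcH$ in the $C^0$ topology (since each branch $b_n$ of $\tg$ is continuous, so both the flat-spot intervals $[b_{j-2}(c_j),b_{j-1}(c_j)]$ and heights $\tilde c_j=c_j+j-1$ depend continuously on $\vc$, with $\tH_{k\vc}=\tg$ elsewhere), so $R_k$ is continuous by composition with Lemma~\ref{basic}(b); inspection of the formula shows $\vc\leq\vc'$ componentwise implies $\tH_{k\vc}\leq \tH_{k\vc'}$ pointwise, so Lemma~\ref{basic}(c) yields monotonicity of $R_k$ along rays with nonnegative direction. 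For (b), by (a) the scalar function $t\mapsto R_k(t\vu+\vL)$ is continuous and nondecreasing, so its $\omega$-level set is a closed interval $[\varphi_{-,\omega}(\vu),\varphi_{+,\omega}(\vu)]$; continuity of the endpoints in $\vu\in Q_k$ follows by a standard compactness argument---if $\vu_n\to\vu$ and $t_n:=\varphi_{-,\omega}(\vu_n)$, any convergent subsequence has limit $t^*$ with $R_k(t^*\vu+\vL)=\omega$ (by continuity of $R_k$), yielding $\varphi_{-,\omega}(\vu)\leq t^*$, while the reverse inequality follows because $R_k(t\vu+\vL)<\omega$ for $t<\varphi_{-,\omega}(\vu)$ combined with continuity forces $t<t_n$ for large $n$.

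For (c), the two cases use the dichotomy in Lemma~\ref{basic}. In the rational case, $R_k(\vc)=p/q$ together with Lemmas~\ref{basic}(d) and~\ref{flat}(a) gives that the recurrent set of $H_{k\vc}$ is a finite union of $(p,q)$-periodic orbits each meeting a flat spot (since $H_{k\vc}$ is $C^1$-expanding on $P(H_{k\vc})$, every periodic point must be super- or semistable); such orbits persist under any small perturbation of $\vc$ that keeps their images inside the perturbed flat spots, so $R_k$ is locally constant at $\vc$ along the ray and $\varphi_{-,p/q}(\vu)<\varphi_{+,p/q}(\vu)$. In the irrational case, I argue by contradiction: if $\vc_1<\vc_2$ on the ray both have $R_k(\vc_i)=\alpha\notin\Q$, monotonicity makes the entire segment a constant $\alpha$-level set; each $H_{k\vc_t}$ is then uniquely ergodic (Lemma~\ref{basic}(e)) with semi-Denjoy minimal recurrent set $Z_t$ having at least one tight flat spot (Lemma~\ref{flat}(b)), and strictly raising $c_j$ along the ray both widens and lifts the $j$-th flat spot, invading the tight endpoint of $Z_t$ from the side where $Z_t$-points accumulate. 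This forces the invariant measure $\mu_t$ and the corresponding HM parameter $\vnu_t$ (via the homeomorphism of Theorem~\ref{iotathm}(b) and the uniqueness in Remark~\ref{unique1}) to vary nontrivially along the segment. The standard Aubry--Mather rigidity argument, parallel to the monotone twist setting referenced in the introduction, then asserts that a nontrivial monotone one-parameter variation in a semi-monotone family must move the rotation number at any irrational value, contradicting constancy of $\alpha$. Hence $\varphi_{-,\alpha}(\vu)=\varphi_{+,\alpha}(\vu)$, and the claimed topological descriptions of the level sets follow from parts (a) and (b).

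For (d), the identification $\rho(g)=[\rho(H_{L_g}),\rho(H_{U_g})]$ reduces to matching the extreme $k=1$ interpolated maps with the lower and upper maps $g_\ell, g_u$ from~\eqref{int}: by construction $L_g=\tg(\min(\Lambda_\infty(g)\cap I_0))$ and $U_g=\tg(\max(\Lambda_\infty(g)\cap I_{-1}))$ are precisely the flat-spot heights at which the interpolated map absorbs exactly the orbits escaping $\Lambda_1(g)$ through the negative-slope region, so $H_{L_g}$ and $H_{U_g}$ coincide with $g_\ell$ and $g_u$ on $\Lambda_1(g)$, giving $\rho(H_{L_g})=\rho(g_\ell)$ and $\rho(H_{U_g})=\rho(g_u)$; equation~\eqref{int} then gives $[\rho(H_{L_g}),\rho(H_{U_g})]=\rho(g)$. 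The final equality $\rho(g)=\rho(\Lambda_1(g),g)$ follows since every $\omega$ in this interval is realized as $R_1(c)$ for some $c\in[L_g,U_g]$ by (a)--(c), and the corresponding recurrent set of $H_{1,c}$ lies in $\Lambda_1(g)$ with $g$-rotation number $\omega$ by Theorem~\ref{boxthm}. The main obstacle throughout is the irrational case of (c)---the rigidity that a monotone one-parameter perturbation cannot preserve an irrational rotation number, which is the core devil's staircase phenomenon and is made tractable here by routing through the HM parameterization.
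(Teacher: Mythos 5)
Parts (a), (d), and the rational half of (c) follow the paper's route closely and are essentially fine. In (b), note that both halves of your argument establish the same inequality: showing that a subsequential limit $t^*$ of $\varphi_{-,\omega}(\vu_n)$ satisfies $R_k(t^*\vu+\vL)=\omega$ gives $t^*\geq \varphi_{-,\omega}(\vu)$, and your "reverse inequality" step ($R_k(t\vu+\vL)<\omega$ for $t<\varphi_{-,\omega}(\vu)$ forces $t<t_n$) again gives $\liminf t_n\geq\varphi_{-,\omega}(\vu)$. The direction $t^*\leq\varphi_{-,\omega}(\vu)$ is never addressed; this is the direction the paper's contradiction argument is aimed at, and it needs to be supplied.

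The genuine gap is the irrational case of (c), which you yourself flag as the main obstacle. The step "the invariant measure and the HM parameter vary nontrivially along the segment, and the standard Aubry--Mather rigidity argument then forces the rotation number to move" does not work. First, no such rigidity statement is available here: the assertion that a monotone one-parameter variation in this family cannot hold an irrational rotation number fixed on a nontrivial interval \emph{is} the statement $\varphi_{-,\alpha}=\varphi_{+,\alpha}$, so invoking it is circular. Second, the intermediate implication is false in spirit: by Theorem~\ref{main}, for $k\geq 2$ there is a $(k-1)$-dimensional family of distinct invariant measures all with rotation number $\alpha$, realized across the level set $R_k^{-1}(\alpha)$, so a nontrivially varying invariant measure is perfectly compatible with a constant irrational rotation number. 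The paper's actual mechanism is concrete and different: if $\varphi_{-,\alpha}(\vu_0)<\varphi_{+,\alpha}(\vu_0)$, componentwise monotonicity of $R_k$ produces an open ball $N\subset R_k^{-1}(\alpha)$; for $\vc\in N$ the semi-Denjoy minimal set $Z\subset P(H_{k\vc})$ has a tight flat spot by Lemma~\ref{flat}(b), and by minimality there are points $z\in Z$ arbitrarily close to the right endpoint $y$ of that gap with $y<H_{k\vc}^n(z)<z$ for some $n>0$; raising the corresponding coordinate of $\vc$ to $\tg_k(z)$ yields a map whose flat spot contains its own $n$-th iterate, hence a periodic orbit and a rational rotation number at a parameter arbitrarily close to $\vc$, contradicting $N\subset R_k^{-1}(\alpha)$. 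Your proposal is missing this return-of-the-flat-spot mechanism, and without it the irrational dichotomy is not established.
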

\begin{proof}
Part (a) follows directly from Lemma~\ref{basic}(b) and (c). 
For (b) assume to the contrary that
 $\varphi_-$ is not continuous. Then there is a sequence $\vu_n\raw\vu_0$
with $\varphi_-(\vu_n)\not\raw \varphi_-(\vu_0)$. Passing to a subsequence
if necessary, there is some $t_0$ with $\varphi_-(\vu_n) \vu_n + \vL 
\raw t_0 \vu_0 + \vL$. By the continuity of $R_k$,
$R_k( t_0 \vu_0 + \vL) = \omega$ and by the nonconvergence assumption,
there is some $t' < t_0$ with  $R_k( t' \vu_0 + \vL) = \omega$.
Thus again by the continuity of $R_k$ for $n$ large enough there is 
some $t''_n < \varphi_-(\vu_n)$ with  
$R_k( t''_n \vu_n + \vL) = \omega$, a contradiction.
Therefore, $\varphi_-$ is continuous: the continuity of 
$\varphi_+$ is similar.

For (c), pick any $t_0$ and $\vu_0$ with $R_k(t_0\vu_0 + \vL) = p/q$
and let $\vc = t_0\vu_0 + \vL$. Then by Lemma~\ref{flat}, 
$H_{\vc}$ has a periodic
orbit $Z\subset P(H_{\vc})$. Since $Z$ is a finite set there is a
nontrivial interval $I$ so that $t\in I$ implies 
$R_k(t\vu_0 + \vL) = p/q$ and so $\varphi_{-,p/q}< \varphi_{+,p/q}$.

To complete (c), assume to the contrary that for some $\vu_0$, 
$\varphi_{-,\alpha}(\vu_0) < \varphi_{+,\alpha}(\vu_0)$. Thus by the
continuity of $R_k$ there is an open ball $N\subset R_k^{-1}(\alpha)$.
Pick $\vc\in N$ and let $Z$ be the semi-Denjoy minimal set in $P(H_{\vc})$
guaranteed by Lemma~\ref{flat} which has at least one tight gap, say the gap 
associated with $c_1$ the first coordinate of $\vc$. Let $y$ be the
$x$-coordinate of the right hand endpoint of this gap and so $\tg_k(y) = c_1$.
Since $Z$ is minimal under $H_{\vc}$ there are points $z\in Z$ with
 $z > y$ and arbitrarily close to $y$ which have a $n>0$ with 
$y < H_{\vc}^n(z) < z$. Now let $c_1' = \tg_k(z)$ and 
$\vc' = (c_1', c_2, \dots, c_k)$ and we have that $H^n_{\vc'}(z) < z$
which says that the $n^{th}$ iterate of the first coordinate
flat spot of $H_{\vc'}$ is in that flat  spot. Thus $H_{\vc'}$ has a
periodic orbit and so $R_k(\vc') \not = \alpha$ for some $\vc'$ arbitrarily
close to $\vc$, a contradiction. 
 
For (d), assume $k=1$ and   $\rho(g) = [\rho_1, \rho_2]$. 
Let $H_T$ be the semi-monotone
map constructed from $g$ to have a single flat spot of height $\tg(x_{max})$
and $H_B$ similarly constructed having a single flat spot of height
 $\tg(x_{min})$. Since $H_T \geq \tg$,  we have $\rho(H_T) \geq \rho_1$. Now
by Lemma~\ref{flat}, there is a compact invariant $Z\subset P(H_T)$ and so
$g_{\vert Z} = (H_T)_{\vert Z}$ and so $\rho(H_T) = \rho(Z, g) 
\in \rho(g)$ and so $\rho(H_T) = \rho_1$. Similarly, $\rho(H_B) = \rho_2$.
 Note that by definition of $H_U$,
the compact invariant $Z\subset P(H_T)$  also satisfies  $Z\subset P(H_U)$
and so $\rho(H_T) = \rho(H_U)$. Similarly, $\rho(H_B) = \rho(H_L)$.
Thus  $\rho(g) = [\rho(H_L), \rho(H_U)]$. Finally, consider the entire
family $H_c$ for $c\in[L, U]$. Since $\rho(H_c)$ is continuous in
$c$, for each $\omega\in [\rho(H_L), \rho(H_U)]$ there is a $c$
with $\rho(H_c) = \omega$. Further, for each $c$ there is a compact
invariant $Z_c\subset P(H_c)$ and $Z_c\subset \Lambda_1(g)$ and
thus $\omega\in\rho(\Lambda_1(g))\subset \rho(g)$

\end{proof}

\begin{remark}\label{interprk} $\ $
\begin{enumerate}[(a)]
\item Note that $H_T(x) \leq x+ 1$  and $H_B(x)\geq x$ and 
thus $g\in\cG$ implies $ \rho(g) \subset [0,1]$. Further,
it follows from (d) that the image of each $R_k$ is $\rho(g)$. 

\item Part (b) deals only with the part of the level sets of 
$R_k$ in the open set $(L_g, U_g)^k$. The extension to all of
$[L_g, U_g]^{k}$ is technical and not very illuminating so we leave it
to the interested reader.
\item Let $\tau$ act on $\vc$ as the left cyclic shift.
It easily follows that $R_k(\tau(\vec{c})) = R_k(\vec{c})$.
\item When $k=1$ there is a one-dimensional family 
$H_c$ for $c\in[L_g,U_g]$. The rotation number $R_1(c)$ is nondecreasing
in $c$ and assumes each irrational value at a point and each
rational value on an interval by (a) and (c). For each $c$ there is
a unique recurrent  $Z_c\subset P(H_c)$ and 
$\iota_1(Z_c)$ is the Sturmian with the given rotation number.
This along with the geometry of the family $H_c$ give the proof of 
Lemma~\ref{sturmorder}.
\end{enumerate}
\end{remark}

\subsection{Comparing $g\in\cG$ to the model map}
In this section we use the interpolation parameter $\vc$ 
to parameterize the  $Z\in\cS_k(g)$ for a general $g\in \cG$.
Notice that for
the model map, $\Lambda_k(f_m)$ is all of $\Omega_k$. Thus 
 $\hcS_k(g) \subset \hcS_k(f_m)$ and so we can
pass back to $\cS(g)$ using the inverse of  the itinerary map.
Thus we can use a subset of the interpolation parameters 
of the model map to parameterize $\cS(g)$ using the symbolic
representation of a kfsm set as the link. This subset turns out to 
be a square of the form $[L', U']^k$. In this section we often add
an additional subscript of $f$ or $g$ to indicate which map 
$f_m$ or $g$ is involved

Since $\iota_{k,g}(\Lambda_k(g)) = \hLambda_k(g) \subset \Omega_k =
\hLambda_k(f)$ we may define  
 $\psi':\Lambda_k(g) \raw \Lambda_k(f)$ by 
$\psi' = \iota_{kf} \circ \iota_{kg}^{-1}$. By Theorem~\ref{conjugacy}
 $\psi'$ is 
an orientation preserving homeomorphism
onto its image as well as a conjugacy. It thus induces
a map $\bpsi:\cS_k(g) \raw \cS_k(f)$.

Recall that the 
parameters for the model map are $[L_{f}, U_{f}]^{k} = [0,1/2]^{k}$.
For a map $\phi: [a,b] \raw [a,b]$ extend it to the Cartesian
product as $\phi^{(k)} = (\phi, \phi, \dots, \phi)$.
\begin{theorem}\label{maininterpthm} 
Given $g\in\cG$  and $k>0$ construct the interpolation
parameters $[L_g,U_g]$.  There exists an interval $[L', U']
\subset [0, 1/2]$ and    
an orientation preserving homeomorphism
$\phi:[U_g, L_g] \raw [L', U']$ so that for all $Z\in \cS_k(g)$, 
 $  \phi^{(k)}(\Boxx_g(Z))) = \Boxx_f(\bpsi(Z))$ and for all $\omega\in\rho(g)$,
$\phi^{(k)}\rho_{k,g}^{-1}(\omega) = \rho_{k,f}^{-1}(\omega)$. Where
\end{theorem}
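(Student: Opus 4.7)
The plan is to use the symbolic conjugacy $\psi'=\iota_{kf}\circ\iota_{kg}^{-1}$ as the bridge between the interpolation-parameter description for $g$ and that for $f_m$, and to define $\phi$ so that identifying parameters $c$ and $\phi(c)$ produces interpolated maps whose positive-slope recurrent sets correspond under $\bpsi$. Because $\psi'$ is an orientation-preserving homeomorphism onto its image that conjugates $\tg_k$ to the lift of $\tf_m$ on $\Lambda_k(g)$, all the flat-spot data in each of the $k$ fundamental domains of $S_k$ transfers coordinate-by-coordinate, so the $k$-variable statement reduces to the $k=1$ case, and the product form $\phi^{(k)}$ then emerges automatically.

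First I reduce to $k=1$ and build $\phi$. For each $c\in [L_g, U_g]$ the one-fold interpolated map $H_{1c}$ built from $g$ has, by Remark~\ref{interprk}(d), a unique recurrent set $Z_{g,c}\subset P(H_{1c})\subset \Lambda_1(g)$ whose itinerary $\iota_{1g}(Z_{g,c})$ is the Sturmian minimal set $C_1(R_1(c))$ in $\Omega_1$. Because $\iota_{1g}(\Lambda_1(g))=\hLambda_1(g)\subset \hLambda_1(f_m)=\Omega_1$, the set $\bpsi(Z_{g,c})$ is a recurrent semi-monotone set for $f_m$ with the same Sturmian itinerary, hence equals $Z_{f,c'}$ for the corresponding interpolated family built from $f_m$, for some $c'\in [0,1/2]$. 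I then define $\phi(c)$ to be that $c'$, but using the flat-spot endpoint $b_{-1}(c)$ rather than the set $Z_{g,c}$ itself, so that $\phi$ is single-valued even on the plateaus of $R_1$ over rational rotation-number intervals. Setting $L':=\phi(L_g)$ and $U':=\phi(U_g)$, order-preservation of $\psi'$ yields monotonicity of $\phi$, and the H\"older regularity of itineraries (Lemma~\ref{iotainduced}) together with the continuity of $R_1$ (Theorem~\ref{rhokthm}(a)) gives continuity of $\phi$, whence $\phi$ is an orientation-preserving homeomorphism onto $[L',U']$.

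Next I verify the two claimed equalities. For $Z\in\cS_k(g)$, Theorem~\ref{boxthm} expresses $\Boxx_g(Z)=\prod_j[\ell_j(Z),r_j(Z)]$ where each edge has the form $\tg_k(\max\{Z\cap I_{2j-1}\})-(j-1)$ or $\tg_k(\min\{Z\cap I_{2j}\})-(j-1)$, truncated to $[L_g,U_g]$. Since $\psi'$ is order-preserving and the address labeling is preserved (i.e.\ $\psi'$ sends $Z\cap I_{i}$ into the analogous address interval for $f_m$, because both itinerary maps use the same $2k$-symbol alphabet defined by the transitions~\eqref{transitions}), the maxima and minima transfer under $\psi'$; the one-dimensional identity built into the definition of $\phi$ then promotes, coordinate by coordinate, to $\phi^{(k)}(\Boxx_g(Z))=\Boxx_{f_m}(\bpsi(Z))$. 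For the rotation-number identity, Theorem~\ref{symandreal} together with Remark~\ref{fact3}(a) gives $\rho_{k,g}(Z)=\hrho_k(\iota_{kg}(Z))=\hrho_k(\iota_{kf}(\bpsi(Z)))=\rho_{k,f_m}(\bpsi(Z))$, so $\bpsi$ preserves $\omega$-slices; combined with the box identity this yields $\phi^{(k)}(\rho_{k,g}^{-1}(\omega))=\rho_{k,f_m}^{-1}(\omega)$.

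The main obstacle is giving a canonical \emph{continuous} definition of $\phi$ in the presence of the plateaus of $R_1$ on rational rotation-number intervals, where many parameters $c$ share the same recurrent set $Z_{g,c}$ and thus the same Sturmian itinerary. The fix, as indicated, is to define $\phi$ via the flat-spot endpoint $b_{-1}(c)$ rather than via $Z_{g,c}$: this endpoint depends monotonically and continuously on $c$, and $\psi'$ acts on it in a symbolic manner that ignores the plateau ambiguity. Once this point is handled, the remaining steps are structural bookkeeping using the order-preservation of $\psi'$ and results already established in the paper.
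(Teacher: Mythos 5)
Your overall strategy is the paper's: use the symbolic conjugacy $\psi'=\iota_{kf}\circ\iota_{kg}^{-1}$ to transfer flat-spot data, and extract the product form $\phi^{(k)}$ from the coordinate-by-coordinate (equivalently $\tau$-equivariant) structure. However, your construction of $\phi$ itself has a genuine gap. The box identity $\phi^{(k)}(\Boxx_g(Z))=\Boxx_f(\bpsi(Z))$ for \emph{all} $Z\in\cS_k(g)$ forces $\phi$ to coincide with $\psi'$ at every corner point $\ell_j(Z),r_j(Z)$, and these points range over a large subset of $\Lambda_1(g)\cap[L_g,U_g]$ (each such corner is $\tg_k$ of a point of $Z$, translated back to the fundamental domain). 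For $k\geq 2$ the corners of boxes of genuinely $k$-fold periodic orbits and semi-Denjoy sets with rotation number $p/q$ lie in the \emph{interiors} of the rational plateaus of $R_1$. Your definition of $\phi$ via the one-fold family $H_{1c}$ only pins $\phi$ down off those plateau interiors (there $Z_{g,c}$ is constant in $c$), so the ``one-dimensional identity built into the definition of $\phi$'' does not promote to the $k$-fold box identity: an arbitrary monotone extension across a plateau will generally disagree with $\psi'$ at the $\Lambda_1(g)$-points inside it, and the box identity fails for the $k$-fold sets whose corners land there.

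Your proposed fix via $b_{-1}(c)$ does not repair this, because $\psi'$ is literally undefined at $b_{-1}(c)$: generically $o(c,\tg)$ leaves the positive-slope region, so $b_{-1}(c)\notin\Lambda_1(g)$, and ``$\psi'$ acts on it in a symbolic manner'' is not a construction. The correct route (and the paper's) is to bypass the interpolated family entirely when defining $\phi$: set $\phi=\psi'$ on the compact set $\Lambda_1(g)\cap[L_g,U_g]$ (with $L'=\psi'(L_g)$, $U'=\psi'(U_g)$), extend monotonically across the complementary gaps, and note that the resulting $\Psi$ on $[L_g,U_g]^k$ commutes with $\tau$, hence is of the form $\phi^{(k)}$; the box identity is then immediate from $\psi'(\ell_j(Z))=\ell_j(\bpsi(Z))$ and $\psi'(r_j(Z))=r_j(\bpsi(Z))$. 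A secondary soft spot: your level-set identity needs more than ``$\bpsi$ preserves slices'' --- you must use that $R_{k}^{-1}(\omega)$ is exactly the union of the boxes of the recurrent kfsm sets with rotation number $\omega$ (Theorem~\ref{boxthm}), for both $g$ and $f_m$, and then account for the fact that $\bpsi$ is not onto $\cS_k(f_m)$, which is why the identity only holds after restricting the model map's level set to $[L',U']^k$.
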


\begin{proof}
Construct $\psi'$ as above. Its properties imply that
\begin{equation}\label{psiprime}
\psi'(\ell_j(Z)) = \ell_j(\bpsi(Z)) \ \text{and} \ 
\psi'(r_j(Z)) = r_j(\bpsi(Z))
\end{equation}
for all $Z\subset \cS_k(g)$ and $j = 1, \dots, k$.
Let $L' = \psi'(L_g)$ and $U' = \psi'(U_g)$. Then $\psi'$ restricts
to  $\psi:\Lambda_k(g)\cap [L_g, U_g]^{k} 
\raw \Lambda_k(f)\cap [L', U']^{k}$. Since $\psi T_k = T_k \psi$
and $\Lambda_k(g)\cap [L_g, U_g]^{k}$ is compact we can extend
$\psi$ equivariantly to a homeomorphism
$\Psi :[L_g, U_g]^{k} \raw [L', U']^{k}$ which using \eqref{psiprime}
satisfies $\Psi\circ \Boxx_g = \Boxx_f\circ\bpsi$. Finally, 
since $\Psi\circ\tau = \tau\circ\Psi$ (recall $\tau$ is the left 
cyclic shift) there is a $\phi:[U_g, L_g] \raw [L', U']$ with
$\Psi = \phi^{(k)}$.
\end{proof} 
Thi result implies that the $\rho_k$-diagram for 
$g$ looks like $k$-dimensional cube cut from inside the
$\rho_k$-diagram of the model map and perhaps rescaled.

\subsection{The case $k=2$: numerics}

Figure~\ref{rect5} shows the $k=2$ rotation number diagram for the 
model map $f_m$. Each connected union of rectangles is the level
set of some rational. The rationals with denominator less than $6$ 
are shown. Only the center rectangle is labeled for each rational.
In the figure each rectangle corresponds to a different $2$-fold
semi-monotone periodic orbit. The intersections of these rectangles
correspond to $H_{\vc}$ which have a cluster of two periodic orbits

\begin{figure}[htbp]
\begin{center}
\includegraphics[width=0.5\textwidth]{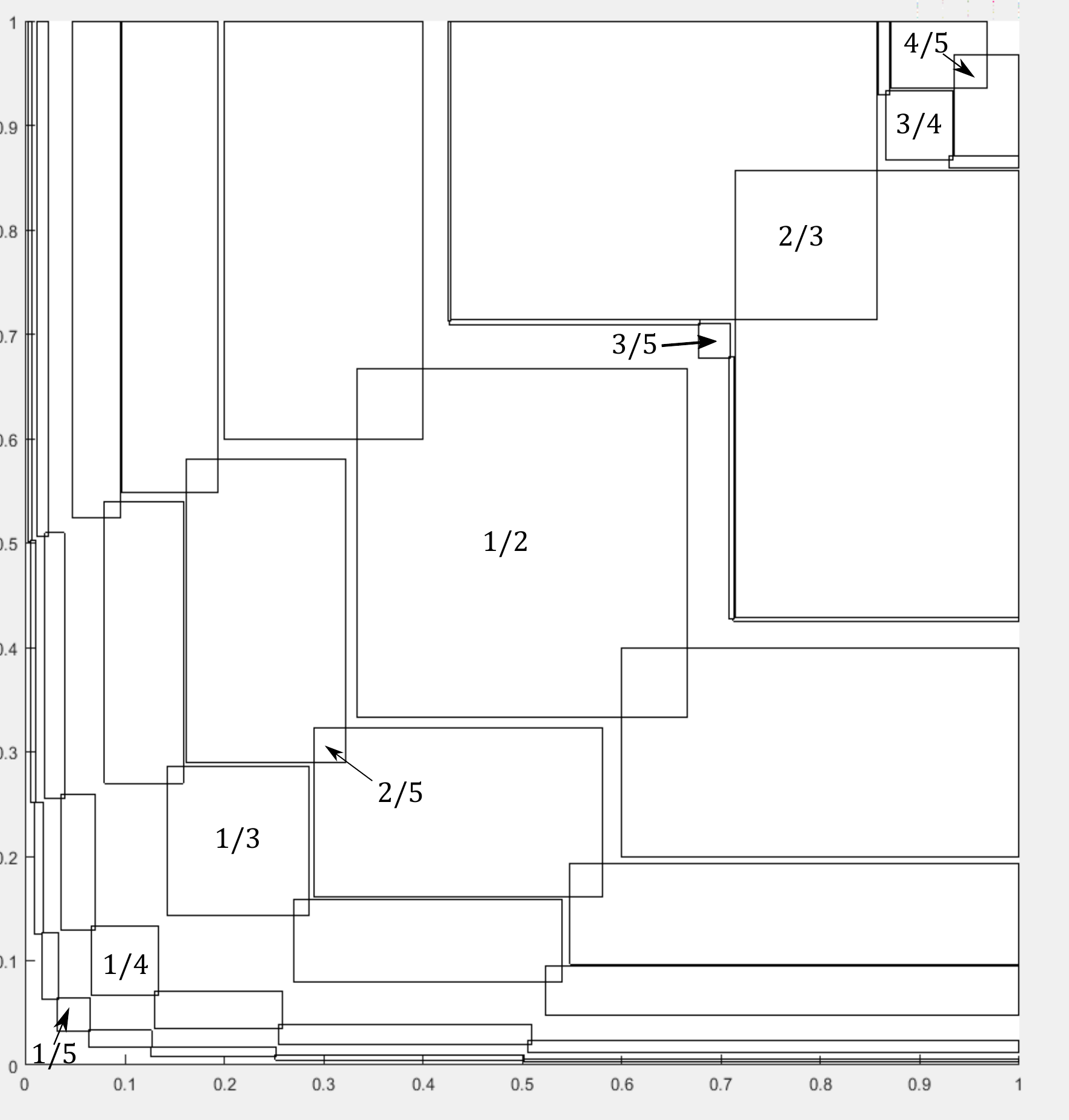}
\end{center}
\caption{The rotation number diagram for the model map with $k=2$.
Figure has been reparameterized for clarity.}
\label{rect5}
\end{figure}

The computation of this diagram
 used a discrete version of the HM construction. The construction
depends on integers $p, q, \mu$ with $0 < p/q < 1$, $p$ and $q$ relatively
prime, and $0 \leq \mu \leq 2(q-p)$. The discrete circle is 
the finite cyclic group 
$\Z/2q\Z = \Z_{2q}$ and it is acted on by $R_p: n \mapsto n+p$. The
address intervals are $X_0' = [1, \mu], X_1' = [\mu + 1, \mu + p],
X_2' = [\mu + p + 1, 2q - p]$, and $I_3 = [2q - p + 1, 2(q-p)]$. 
Let $B'(p, q, \mu)$ be the itinerary
of the point $1$ under $R_p$.

Using Theorem~\ref{Bkstruc}(b),  
when $p$ is odd, $R_p$ has a single period $2q$ orbit in $\Z_{2q}$.
Expanding the points in $\Z_{2q}$ to intervals in the circle
as in the proof of Theorem~\ref{Bkstruc}(c), we see that by varying 
$\mu$ the construction
generates  all the symbolic $p/q$-periodic
$2$-fold semi-monotone sets in $\Omega_2$.  

Now when $p$ is even, $R_p$ has a pair of period $q$ orbits.
When $\mu$ is odd, these generate different  periodic
orbits $B'(p, q, \mu)$. However, $\mu$ even corresponds to 
a pure parameter and so varying $\mu$ through the even
$\mu$ generates all the symbolic $p/q$-periodic
$2$-fold semi-monotone sets in $\Omega_2$.  
 
The next step is to use $B'(p, q, \mu)$ to compute its symbolic box
as in Corollary~\ref{symholecor} below. Finally, we take the inverse
 of the itinerary map
for the model map to get a box in the $\vc$ parameter.
 Because the map $f_m$ has uniform slope of
three in its positive slope region the formula for this inverse is
$\us\in\Sigma_2^+$,
\begin{equation}\label{iotainv}
	\iota_1^{-1}(\us) = \sum_{j=0}^\infty \frac{s_j}{3^{j+1}}.
\end{equation}

\section{Symbolic kfsm sets and the map $z\mapsto z^n$}\label{last}

Using the model map 
Theorem~\ref{boxthm} characterising ``physical'' kfsm sets can be directly
transformed into a characterization of symbolic kfsm sets.
For compact $\hZ\subset\Omega_k$ for $j = 1, \dots, k$ define 
$$
\hell_j(Z) = \sigma_k( \max\{\hZ\cap [2j -1]\})\ \text{and}  \  
\hr_j(Z) = \sigma_k( \min\{\hZ\cap [2j]\}) 
$$

Since $\iota_k$ is order preserving and onto for 
the model map we have
\begin{corollary}\label{symholecor} 
Assume $\hZ\subset \Omega_k$ is compact and 
shift invariant.
The following are equivalent
\begin{enumerate}
\item $\hZ$ is kfsm 
\item For $j = 1, \dots, k$, $\hell_j(\hZ) \leq  \hr_j(\hZ)$ 
with indices reduced mod $2k$
\end{enumerate}
\end{corollary}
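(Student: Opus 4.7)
The plan is to deduce this corollary directly from Theorem~\ref{boxthm} by transferring the physical characterization to the symbolic setting through the itinerary conjugacy $\iota_k$ applied to the model map $f_m$. For the model map, $\hLambda_k(f_m) = \Omega_k$, so by Theorem~\ref{conjugacy} the map $\iota_k$ is a topological conjugacy between $(\Lambda_k(f_m), \tg_k)$ and $(\Omega_k, \sigma_k)$ (with $\tg = \tf_m$), and its lift $\iota_\infty$ is order preserving on the universal cover.

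First I would set $Z := \iota_k^{-1}(\hZ) \subset \Lambda_k(f_m)$, which is compact and $\tg_k$-invariant. By Theorem~\ref{symandreal}, $\hZ$ is symbolic kfsm iff $Z$ is physical kfsm, and by Theorem~\ref{boxthm} applied to $g = f_m$, this is equivalent to $\ell_j(Z) \leq r_j(Z)$ for all $j = 1, \dots, k$. Unwinding the definitions and cancelling the common ``$-(j-1)$'' shift on both sides (the clipping to $[L_{f_m}, U_{f_m}]$ does not affect the inequality itself, since for the model map the relevant values already lie in the appropriate window), this says precisely
\begin{equation*}
\tg_k\bigl(\max\{Z \cap I_{2j-1}\}\bigr) \leq \tg_k\bigl(\min\{Z \cap I_{2j}\}\bigr).
\end{equation*}

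Next I would transport this inequality through $\iota_k$. Because $\iota_k$ restricts to an order-preserving bijection from $Z \cap I_m$ onto $\hZ \cap [m]$ for each address symbol $m$ (inherited from the order-preservation of $\iota_\infty$ in Theorem~\ref{conjugacy}), it sends $\max\{Z \cap I_{2j-1}\}$ to $\max\{\hZ \cap [2j-1]\}$ and $\min\{Z \cap I_{2j}\}$ to $\min\{\hZ \cap [2j]\}$. The conjugacy identity $\iota_k \circ \tg_k = \sigma_k \circ \iota_k$ then turns the physical inequality into $\sigma_k(\max\{\hZ \cap [2j-1]\}) \leq \sigma_k(\min\{\hZ \cap [2j]\})$, which is exactly $\hell_j(\hZ) \leq \hr_j(\hZ)$. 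Order preservation on the image side is legitimate because both compared points lie in $[2j] \cup [2j+1]$ by the allowed transitions of $\Omega_k$, a single connected arc of $S_k$ in which the lexicographic order matches the spatial order via $\iota_k$.

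The main technical point will be making the order-correspondence on the cyclic cover $S_k$ precise, since Theorem~\ref{conjugacy} explicitly states order preservation only for $\iota_\infty$ on the universal cover. I would handle this by lifting $Z$ to its full lift $\tilde Z \subset \Lambda_\infty(f_m)$, extracting the extrema within each lifted copy of $I_m$ there, and descending via $p_k$: since each $I_m$ lies in a single fundamental domain of $S_k$, the resulting maxima and minima are unambiguous and the local comparison used to read off $\hell_j$ and $\hr_j$ is compatible with $\iota_k$.
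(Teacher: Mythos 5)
Your proposal is correct and follows exactly the route the paper takes: the paper's entire justification is that $\iota_k$ is order preserving and onto for the model map $f_m$ (for which $\hLambda_k(f_m)=\Omega_k$), so Theorem~\ref{boxthm} transfers verbatim through the conjugacy. Your write-up simply makes explicit the details the paper leaves implicit (pulling back $\hZ$ to $Z=\iota_k^{-1}(\hZ)$, invoking Theorem~\ref{symandreal}, and checking that the order comparison is local to the arc $I_{2j}\cup I_{2j+1}$ so that order preservation of $\iota_\infty$ on the universal cover suffices).
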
 
If $Z$ is recurrent 
we know that each $Z\in\cS_k(f)$ has $\iota_k(\hZ) = B_k(\omega, \vnu)$
for some allowable $(\omega, \vnu)$ which yields an indirect connection
between the interpolated semi-monotone maps and HM parameterization.

There is a well known connection between 
 the dynamics of $d_n:z\mapsto z^n$ and the full shift on $n$ symbols. 
 This yields a connection of the symbolic kfsm sets as described by this
corollary to invariant sets of the circle on which
the action of $d_n$ is semi-monotone, sometimes call \textit{circular
orbits}. 

\begin{figure}[htbp]
\begin{center}
\includegraphics[width=0.4\textwidth]{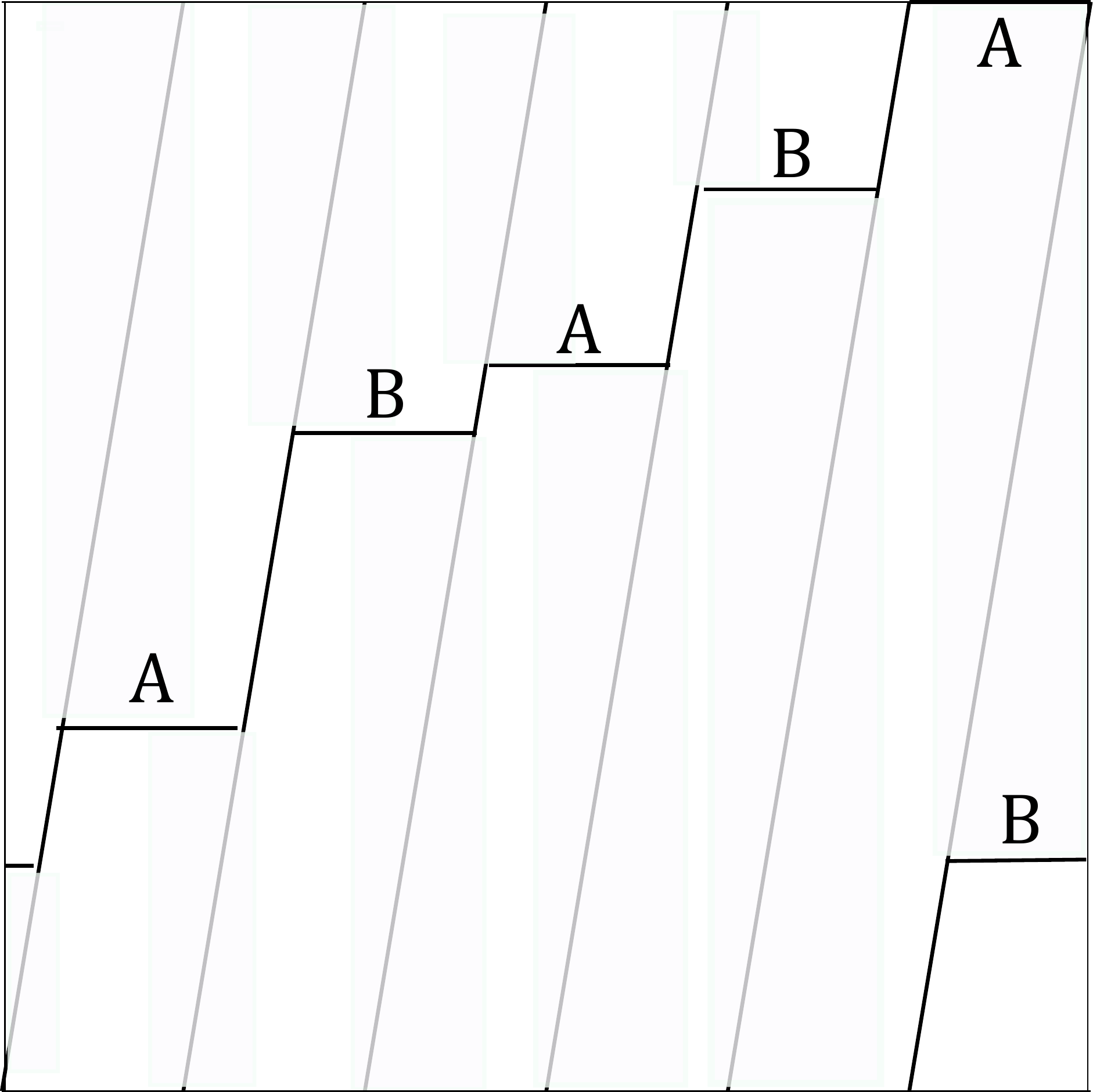}
\end{center}
\caption{The semi-monotone map corresponding to a symbolic $3$-fold
semi-monotone set interpolated into $z\mapsto z^6$ }
\label{complex}
\end{figure}

In Figure~\ref{complex} 
we show the conditions forced by Corollary~\ref{symholecor}
as flat spots in the graph of $d_n$ for $k=3$ and $n=6$. 
There are two classes of flat spots. Those in class A are forced by the
condition that $\hZ\subset \Omega_k$ and thus satisfies
\eqref{transitions}. These are the intervals of
width $1/6$, $[1/18, 2/9], [7/18, 5/9]$ and $[13/18, 8/9]$. These
conditions are satisfied by  all symbolic
kfsm sets in the corollary. The other three flat spots in class B are
determined by the conditions in part (b) of the corollary and
vary with the symbolic kfsm set. Note that the result of addding
all the flat spots is a degree one semi-monotone circle map as expected.
See figures in \cite{goldtress} and \cite{uabthesis}.

The figure also illustrates a clear difference between the kfsm sets
for bimodal circle maps and circular orbits for $d_n$-one. 
Specifically, the kfsm sets correspond
to a specific subclass of circular orbits for $d_{2k}$. 
On the other hand, there is 
clearly a tight relationship between the theories which needs
to be investigated. Perhaps  the degree reduction process described
in \cite{uab, uabthesis} would be a good place to start.

\bibliographystyle{amsplain}
\bibliography{circlerefs}

\end{document}